\theoremstyle{plain}
\newtheorem{thm}{Theorem}[section]
\newtheorem{cor}[thm]{Corollary}
\newtheorem{lem}[thm]{Lemma}
\newtheorem{prop}[thm]{Proposition}
\theoremstyle{definition}
\theoremstyle{remark}
\newtheorem{remark}[thm]{Remark}
\newtheorem{assumption}[thm]{Assumption}
\newcommand{\eremk}{\hbox{}\hfill\rule{0.8ex}{0.8ex}}
\newcommand{\h}{h} 
\newcommand{\htilde}{\widetilde \h} 
\newcommand{\hE}{\h_\E} 
\newcommand{\PItD}{\mathcal P_{ItD}} 
\newcommand{\T}{\mathcal T} 
\newcommand{\Th}{\T_\h} 
\newcommand{\Aprime}{\mathcal A'} 
\newcommand{\B}{\mathcal B} 
\newcommand{\Aprimek}{\Aprime_k} 
\newcommand{\Bk}{\B_k} 
\newcommand{\vtilde}{v^{ext}} 
\newcommand{\vext}{\vtilde} 
\newcommand{\lambdah}{\lambda_\h} 
\newcommand{\xih}{\xi_\h} 
\newcommand{\wext}{w^{ext}} 
\newcommand{\uext}{u^{ext}} 
\newcommand{\uhext}{\uext_\h} 
\newcommand{\f}{f} 
\newcommand{\ftilde}{\widetilde\f} 
\renewcommand{\k}{k} 
\newcommand{\kn}{k n} 
\renewcommand{\Re}{\mathbb {RE}} 
\renewcommand{\Im}{\mathbb {IM}} 
\newcommand{\m}{m} 
\newcommand{\mh}{\m_\h} 
\newcommand{\G}{G}
\newcommand{\Gk}{\G_{\k}}
\newcommand{\xbold}{\mathbf x} 
\newcommand{\ybold}{\mathbf y} 
\newcommand{\Omegap}{\Omega^{\text{ext}}} 
\newcommand{\nGamma}{\mathbf n_{\Gamma}} 
\newcommand{\V}{\mathcal V} 
\newcommand{\Vz}{\V_0} 
\newcommand{\Vtilde}{\widetilde \V} 
\newcommand{\Vk}{\V_k} 
\newcommand{\Vtildek}{\Vtilde_k} 
\newcommand{\K}{\mathcal K} 
\newcommand{\Kz}{\K_0} 
\newcommand{\Kprime}{\K'} 
\newcommand{\Kprimez}{\Kprime_0} 
\newcommand{\Ktilde}{\widetilde \K} 
\newcommand{\W}{\mathcal W} 
\newcommand{\Wz}{\W_0} 
\newcommand{\Kk}{\K_k} 
\newcommand{\Kprimek}{\Kprime_k} 
\newcommand{\Ktildek}{\Ktilde_k} 
\newcommand{\Wk}{\W_k} 
\newcommand{\p}{p} 
\newcommand{\Hpw}{H_{\operatorname{pw}}}
\newcommand{\gammaz}{\gamma_0} 
\newcommand{\gammazint}{\gammaz^{int}} 
\newcommand{\gammazext}{\gammaz^{ext}} 
\newcommand{\gammao}{\gamma_1} 
\newcommand{\gammaoint}{\gammao^{int}} 
\newcommand{\gammaoext}{\gammao^{ext}} 
\newcommand{\psim}{\psi_m} 
\newcommand{\psitilde}{\psi^{ext}} 
\newcommand{\psih}{\psi_\h} 
\newcommand{\psimh}{\psi_{m\h}} 
\newcommand{\psitildeh}{\psi_\h^{ext}} 
\newcommand{\n}{\mathbf n} 
\newcommand{\Rm}{R_\m}
\newcommand{\Rext}{R^{ext}}
\newcommand{\F}{F} 
\newcommand{\Fhat}{\widehat \F}
\newcommand{\uh}{u_h} 
\newcommand{\vh}{v_h} 
\newcommand{\vexth}{v_\h^{\text{ext}}} 
\newcommand{\Vh}{V_h} 
\newcommand{\Vtildeh}{\widetilde V_\h}
\newcommand{\Wh}{W_h} 
\newcommand{\Zh}{Z_h} 
\newcommand{\taun}{\Omega_\h} 
\newcommand{\Omegah}{\taun}
\newcommand{\tautilden}{\widetilde \Omega_h}
\newcommand{\tautildenE}{\widetilde \Omega_h^\E}
\newcommand{\cG}{c_G}
\newcommand{\rr}{r}
\renewcommand{\rm}{\rr_\m}
\newcommand{\rext}{\rr^{ext}}
\renewcommand{\div}{\operatorname{div}}
\newcommand{\Ad}{\nu}
\newcommand{\Pcal}{\mathcal P}
\newcommand{\Pcalo}{\Pcal_1}
\newcommand{\Pcalt}{\Pcal_2}
\newcommand{\Pcalohat}{\widehat{\Pcal}_1}
\newcommand{\Pcalthat}{\widehat{\Pcal}_2}
\renewcommand{\i}{\text{i}}
\newcommand{\sigmabold}{\boldsymbol \sigma}
\newcommand{\taubold}{\boldsymbol \tau}
\newcommand{\E}{K}
\newcommand{\Ehat}{\widehat \E}
\newcommand{\Etilde}{\widetilde \E}
\newcommand{\nE}{\mathbf n_{\E}}
\newcommand{\uhat}{\widehat u}
\newcommand{\uhhat}{\widehat u(\uh)}
\newcommand{\sigmaboldhat}{\widehat{\sigmabold}}
\newcommand{\sigmaboldhhat}{\widehat {\sigmabold}(\uh)}
\newcommand{\Fh}{\mathcal F_h}
\newcommand{\FhI}{\Fh^I}
\newcommand{\FhB}{\Fh^B}
\newcommand{\ahE}{a^\E_\h}
\newcommand{\Xh}{X_\h}
\newcommand{\bhGamma}{b_\h^\Gamma}
\newcommand{\DG}{\text{DG}(\Omega)}
\newcommand{\DGp}{\text{DG}^+(\Omega)}
\newcommand{\ccoer}{c_{\text{coer}}}
\newcommand{\cc}{c_c}
\newcommand{\xbf}{\mathbf x}
\newcommand{\afrak}{\mathfrak{a}}
\newcommand{\bfrak}{\mathfrak{b}}
\newcommand{\dfrak}{\mathfrak{d}}
\newcommand{\afrakz}{\mathfrak{a}_0}
\newcommand{\bfrakz}{\mathfrak{b}_0}
\newcommand{\dfrakz}{\mathfrak{d}_0}
\newcommand{\Ctrace}{C_{\text{trace}}}
\newcommand{\Pkar}{\mathcal P}
\newcommand{\nablah}{\nabla_\h}
\newcommand{\aaleph}{\aleph}
\newcommand{\epsilontilde}{\widetilde{\varepsilon}}
\newcommand{\ldc}{\{\!\!\{}
\newcommand{\rdc}{\}\!\!\}}
\newcommand{\cinvG}{c_{\text{inv}}^G}
\newcommand{\Gammah}{\Gamma_\h}
\newcommand{\PhiE}{\Phi_\E}
\newcommand{\boldalpha}{\boldsymbol \alpha}
\newcommand{\Nbb}{\mathbb N}
\newcommand{\Rbb}{\mathbb R}
\newcommand{\DGFEMBEM}{DGFEM-BEM{}}
\newcommand{\dG}{DGFEM}
\newcommand{\msf}{{\mathfrak h}}
\newcommand{\hEone}{\h_{\E_1}} 
\newcommand{\hEtwo}{\h_{\E_2}} 
\newcommand{\numax}{\Ad_{\text{max}}}
\newcommand{\numin}{\Ad_{\text{min}}}
\newcommand{\lefttriplenorm}{\ensuremath{\left| \! \left| \! \left|}}
\newcommand{\righttriplenorm}{\ensuremath{\right| \! \right| \! \right|}}
\newcommand{\energynorm}[1]{\lefttriplenorm #1 \righttriplenorm_{\DG}}
\newcommand{\energynormp}[1]{\lefttriplenorm #1 \righttriplenorm_{\DGp}}
\newcommand{\x}{x}
\newcommand{\xh}{x_h}
\newcommand{\yh}{y_h}
\newcommand{\tepsilon}{t}
\definecolor{ilariagreen}{rgb}{0 0.7 0.2}
\definecolor{ceyellow}{rgb}{0.7 0.4 0.5}
\newcommand{\vertiii}[1]{{\left\vert\kern-0.25ex\left\vert\kern-0.25ex\left\vert #1  \right\vert\kern-0.25ex\right\vert\kern-0.25ex\right\vert}}
\numberwithin{equation}{section}
\pgfplotsset{compat=1.13}
\pgfplotsset{
  log x ticks with fixed point/.style={
      xticklabel={
        \pgfkeys{/pgf/fpu=true}
        \pgfmathparse{exp(\tick)}%
        \pgfmathprintnumber{\pgfmathresult}
        \pgfkeys{/pgf/fpu=false}
      }
  },
  log y ticks with fixed point/.style={
      yticklabel={
        \pgfkeys{/pgf/fpu=true}
        \pgfmathparse{exp(\tick)}%
        \pgfmathprintnumber[fixed relative, precision=6]{\pgfmathresult}
        \pgfkeys{/pgf/fpu=false}
      }
  }
}
\newcommand{\generatehplot}[7]{
\begin{subfigure}{0.45\textwidth}
  \begin{tikzpicture}
    \pgfplotsset{   
      cycle list/Set1,
      cycle multiindex* list={
        mark list\nextlist
        Set1\nextlist
      },
    }

  \begin{loglogaxis}[
    width=6.2cm, height=6cm,     
    grid = major,
    grid style={dashed, gray!30},
    axis background/.style={fill=white},
    xlabel=$h$,
    ylabel=#6,
    legend style={legend pos=south west},
    log basis x=2,
    xtick = {1,  0.25,  0.0625, 0.015625,0.0078125,0.00390625},
    legend pos= south east
    ]
    \def\thisk{#1}
    \def\thatk{#2}
    \def\datacol{#3}
    \def\expectedorders{#4}
    \def\orders{1,2,3}
    \ifthenelse{\thisk<9}{\def\myxmin{0.03}}{\def\myxmin{0.012}}

    \foreach \order in \orders{
    \addplot+[solid,mark size=2pt,mark options={line width=1.0pt}] table    
    [
    x expr=2^(-\thisrow{num_refines}),
    y expr=\thisrow{\datacol}*x^(#5),
    col sep=comma,
    restrict expr to domain={\thisrow{order}}{\order:\order},
    restrict expr to domain={\thisrow{k}}{\thisk:\thatk}
    ]{output_dg2.csv};
    }
    \foreach \expectedorder/\cc[count=\count] in \expectedorders
    {
      \edef\temp{
        \noexpand\addplot [black,dashed ] %
        expression [domain=0.5:\myxmin, samples=15] { \cc*x^(\expectedorder)} %
        node [pos=0.85,inner sep=0pt, anchor=south east] { {\noexpand \tiny $\noexpand\mathcal{O}(h^{\expectedorder})$}};}
        \temp
    };
     \legend{$p=1$,$p=2$, $p=3$}
  \end{loglogaxis}
\end{tikzpicture}
\end{subfigure}\;}
\author{Christoph Erath \thanks{University College of Teacher Education Vorarlberg, 6800 Feldkirch, Austria\newline E-mail: {\tt christoph.erath@ph-vorarlberg.ac.at}} \and
Lorenzo Mascotto \thanks{Fakult\"at f\"ur Mathematik, Universit\"at Wien, 1090 Vienna, Austria\newline E-mail: {\tt lorenzo.mascotto@univie.ac.at, ilaria.perugia@univie.ac.at, alexander.rieder@univie.ac.at}} \and 
Jens M.~Melenk \thanks{Institut f\"ur Analysis und Scientific Computing, TU Wien, 1040 Vienna, Austria\newline E-mail: {\tt melenk@tuwien.ac.at}} \and 
Ilaria Perugia \footnotemark[2] 
\and 
Alexander Rieder \footnotemark[2]}
\title{\textbf{\Large{Mortar coupling of $hp$-discontinuous Galerkin and boundary element methods for the Helmholtz equation}}}
\begin{document}
\maketitle

\begin{abstract}
\noindent
We design and analyze a coupling of a discontinuous Galerkin finite element method with a boundary element method to solve the Helmholtz equation with variable coefficients in three dimensions. 
The coupling is realized with a mortar variable that is related to an impedance trace on a smooth interface.
The method obtained has a block structure with nonsingular subblocks.
We prove quasi-optimality of the $\h$- and $\p$-versions of the scheme, under a threshold condition on the approximability properties of the discrete spaces.
Amongst others, an essential tool in the analysis is a novel
discontinuous-to-continuous reconstruction operator on tetrahedral meshes with curved faces.

\medskip\noindent
\textbf{Keywords}: discontinuous Galerkin method; boundary element method; mortar coupling; Helmholtz equation; variable sound speed
\end{abstract}

\section{Introduction}
A natural habitat of wave propagation problems are unbounded domains. An important class of numerical methods for this setting is the coupling of a
volume-based method such as the finite element method (FEM) or one of its  variants for a finite, chosen computational domain $\Omega$ 
and a boundary element method (BEM) for its unbounded exterior $\Omegap:=\Rbb^3\setminus \overline{\Omega}$. In this paper, we study such a coupling technique 
for a time-harmonic  acoustic scattering problem modelled by the Helmholtz equation in $\Rbb^3$ and given by 
\begin{equation} \label{original:problem}
- \div (\nu \nabla u) - (\kn)^2 u = f \quad \mbox{ in $\Rbb^3$},
\end{equation}
where the coefficients $\nu$ and~$n$ are constant outside a sufficiently large ball, $k$ denotes the wave number, and~$\f$ is the right-hand side.
Our focus is on a strategy that couples a high order discontinuous Galerkin finite element method (DGFEM) in the computational domain~$\Omega$ with a BEM on $\Gamma:=\partial\Omega$ to account for~$\Omegap$.
We consider approximation spaces made of piecewise polynomial functions.

The present work is a continuation of the recent work \cite{FEMBEM:mortar}, where the coupling of a 
conforming, high order FEM with a BEM is presented and analyzed. The coupling there is reminiscent of the symmetric coupling proposed 
in \cite{costabel88a} and \cite{han90} for Poisson-type problems but uses an additional mortar variable that 
has the physical meaning of a Robin trace for incoming waves. A feature of the mortar-based coupling is that 
the resulting system has a block structure where the two blocks
corresponding to the volume and to the BEM unknowns, respectively,
are individually invertible. This allows for 
the use of existing discretization techniques  for these blocks. Other FEM-BEM coupling strategies for Helmholtz 
problems are possible and are discussed in \cite{FEMBEM:mortar}. In contrast to the conforming setting of \cite{FEMBEM:mortar},
our focus here is on a DGFEM for the discretization in~$\Omega$, since the DGFEM has proved to be a very versatile discretization technique that can accommodate very well, for example, 
high order discretizations. High order methods are particularly suited for wave propagation problems~\cite{Ainsworth2004,melenk-sauter10, melenk-sauter11, MelenkParsaniaSauter_generalDGHelmoltz, melenk-loehndorf11}.
Further well known advantages of DG discretizations include the ease to realize adaptivity and accommodate nonuniform polynomial degree distributions.
Moreover, DG formulations for Helmholtz problems in bounded domains have the potential to be   unconditionally well posed~\cite{GHP_PWDGFEM_hversion,FengWu}.
We refer, e.g., to~\cite{FengWu2009,FengWu,FengXing2013,Amara2012,MelenkParsaniaSauter_generalDGHelmoltz,DuXu2016,ZhaoParkChung2020} for polynomial-based DG methods for the
  Helmholtz problem,
  to~\cite{GriesmaierMonk2011,ChenLuXu2013,CuiZhang2014} for hybridized DG (HDG) methods, and to~\cite{Demkowicz2012,Gopalakrishnan2014} for
  discontinuous Petrov-Galerkin (DPG) methods.

For Poisson-type problems, couplings of several variants of the DGFEM with the BEM have been proposed and analyzed. The first analysis appears to be that of the symmetric coupling 
of the local DG (LDG) method with BEM in \cite{Gatica2006,Bustinza2008,Gatica2010}. Generalizations to nonsymmetric couplings have been proposed in \cite{Of2012} and 
analyzed in \cite{Heuer2015}. The closely related coupling of finite volume methods with BEM is analyzed in \cite{Erath:2010-phd,Erath:2012-1,Erath:2013-3}. 
A fairly general framework that uses mortar variables for coupling the DGFEM and the BEM can be found in \cite{cockburn-sayas12,cockburn2012}. 
In the limit~$k \rightarrow 0$, which is not the focus of the present work, our method for~\eqref{original:problem} has similarities with those of~\cite{Gatica2006, Erath:2012-1} for the Laplace equation.

On a technical side, a main difficulty of the analysis of couplings of DG with BEM arises from the mapping properties of BEM operators that do not 
easily accommodate the discontinuous traces of DG functions.  This is one of the reasons for using mortar variables for the coupling both for Poisson-type problems 
discussed above and the present Helmholtz equation.  In our analysis, we tackle this issue with a new discontinuous-to-continuous operator in Theorem~\ref{lemma:Karakashian-style}.
From the many possible DG variants, we opted for an interior penalty-like DG method to keep the presentation as simple as possible, although other DG discretizations could be analyzed with similar techniques.
Following the lead in \cite{FEMBEM:mortar}, we employ a form of symmetric coupling using all four BEM operators.  The sesquilinear forms are carefully designed 
to ensure both consistency and adjoint consistency. In particular, as compared to~\cite{FEMBEM:mortar}, the discretization
  of the coupling condition required us to introduce an additional (consistent) term in order to prove a discrete G{\aa}rding inequality.

\paragraph*{Notation.}
For bounded Lipschitz domains $D \subset {\mathbb R}^d$, $d\ge 1$, we introduce the following norms and spaces: 
For integers~$s \in \mathbb N_0$ and complex-valued functions~$v$, we define the norms $\|v\|^2_{s,D} := \sum_{\boldalpha \in \Nbb_0^3 \colon|\boldalpha| \leq s} \|D^{\boldalpha} v\|^2_{0,D}$
and the seminorms $|v|^2_{s,D} := \sum_{\boldalpha \in \Nbb_0^3 \colon|\boldalpha| = s} \|D^{\boldalpha} v\|^2_{0,D}$.
The Hilbert spaces $H^s(D)$ and~$H^s_0(D)$ are defined as the closure of~$C^\infty (D)$ and~$C^\infty_0(D)$ with respect to the norm~$\|\cdot\|_{s,D}$.
We further set~$H^0(D):= L^2(D)$. 
For a noninteger~$s>0$, the spaces~$H^s(D)$ and~$H^s_0(D)$ 
are defined by interpolation between $H^{\lfloor s\rfloor}(D)$
and~$H^{\lceil s\rceil}(D)$
and between~$H^{\lfloor s\rfloor}_0(D)$ and~$H^{\lceil s\rceil}_0(D)$,
respectively.
For~$s > 0$, the space~$H^{-s}(D)$ is defined as the dual of~$H^s_0(D)$ with norm 
\[
\|v\|_{-s,D} := 
\sup_{w \in H^s_0(D)} \frac{ | \langle v, w \rangle | }{\|w\|_{s,D}},
\]
where $\langle \cdot,\cdot \rangle$ denotes the duality pairing, which coincides with the $L^2$ inner product whenever both~$v,w \in L^2(D)$.
The inner product in~$H^s(D)$, denoted by $(\cdot,\cdot)_{s,D}$, is linear in the first argument and antilinear in the second argument.

For closed, connected, smooth 2-dimensional surfaces $\Gamma \subset {\mathbb R}^3$ and~$s \ge 0$,
we define the Sobolev spaces~$H^s(\Gamma)$ as follows.
Let~$\{ \varphi_n, \lambda_n  \}_{n \in \mathbb N_0}$ be a sequence of eigenpairs of the Laplace-Beltrami operator on~$\Gamma$, so that $\{\varphi_n \}_{n \in \mathbb N_0}$ is an orthonormal basis of $L^2(\Gamma)$.
For~$s \ge 0$, we define the norm $\|v\|^2_{s,\Gamma}:= \sum_{n} |v_n|^2 (1+\lambda_n)^s$,  where $v = \sum_{n} v_n \varphi_n$ is expanded in the basis $\{\varphi_n\}_{n \in {\mathbb N}_0}$.
Then, $H^s(\Gamma):=\{v\in L^2(\Gamma): \|v\|_{s,\Gamma}<+\infty\}$.
The $\|\cdot\|_{s,\Gamma}$ norm is equivalent to the one obtained by using local charts as described in~\cite{mclean2000strongly}; see also~\cite[Sec.~{5.4}]{nedelec01}.
The mapping $v \mapsto (v_n)_{n \in {\mathbb N}_0}$, with $v_n =
(v,\varphi_{n})_{L^2(\Gamma)}$, is an isometric isomorphism 
between~$H^s(\Gamma)$ and the sequence space $\{(v_n)_{n \in {\mathbb N}_0}\,|\, \sum_n |v_n|^2 (1+\lambda_n)^s <\infty\}$.
Negative order Sobolev spaces are defined by duality and equipped with the norm
\[
\|v\|_{-s,\Gamma} := 
\sup_{w \in H^s(\Gamma)} \frac{| \langle v, w \rangle | }{\|w\|_{s,\Gamma}}.
\]
Moreover, $H^{-s}(\Gamma)$ is equivalent to the space~$\{(v_n)_{n \in {\mathbb N}_0}\,|\, \sum_n |v_n|^2 (1 + \lambda_n)^{-s}<\infty\}$,
endowed with the norm $\| v\|^2_{-s,\Gamma} = \sum_{n} |v_n|^2(1+\lambda_n)^{-s}$.
When identifying the spaces~$H^s(\Gamma)$ and~$H^{-s}(\Gamma)$ with
sequence spaces as above, the duality pairing takes the form
\[
\langle v, w\rangle = \sum_n v_n \overline{w_n},
\]
and, for $s\in \mathbb R$, the inner product in $H^{s}(\Gamma)$ takes the form
\[
(v, w)_{s ,\Gamma} = \sum_{n \in \mathbb N_0} v_n \overline{w_n} (1+\lambda_n)^{s}.
\]
The seminorm~$|\cdot|_{\frac12,\Gamma}$ in~$H^{\frac12}(\Gamma)$ is defined by 
$|v|_{\frac12,\Gamma} =\inf_{c \in  {\mathbb C}}  \|v -
c\|_{\frac12,\Gamma}$.

Let~$s$ and~$s' \in \Rbb$.
For a bounded linear operator $K:H^s(\Gamma) \rightarrow H^{s'}(\Gamma)$,
the adjoint operator~$K^\ast:H^{-s'}(\Gamma) \rightarrow H^{-s}(\Gamma)$
is defined by $\langle v, K^\ast w \rangle = \langle Kv, w\rangle$, where
the duality pairings are between the appropriate
spaces. This also implies $\langle  w,K v\rangle = \langle K^\ast w, v\rangle$.

As we deal with the Helmholtz problem, we also introduce the following
$k$-weighted Sobolev norms for integers $s\ge 1$ on domains $D\subset \mathbb R^3$:
\[
\| v\|_{s,k,D}^2:=\sum_{\boldalpha \in \mathbb N_0^3 \colon|\boldalpha| \leq s} k^{2(s-|\boldalpha|)}\|D^\alpha v\|^2_{0,D}.
\]
Finally, given~$a$, $b\ge 0$, we write~$a \lesssim b$
and~$a \gtrsim b$ to indicate the existence 
of a positive constant~$c$, whose dependence is specified
at each occurrence, such that~$a \le c \, b$ and~$a \ge c\,b$, respectively.

\paragraph*{Outline of the paper.}
The mortar formulation of the three dimensional Helmholtz problem is detailed in Section~\ref{section:continuous:problem}; here, we also recall several properties of boundary integral operators.
We introduce the DGFEM-BEM mortar method in Section~\ref{section:dG}.
Such a discretization is characterized by a sesquilinear form
satisfying a G{\aa}rding inequality and continuity estimates, which
we prove in Sections~\ref{section:Garding} and~\ref{section:continuity}, respectively.
In Section~\ref{section:adjoint}, we  provide results for the adjoint problem.
Then we cope with the well posedness of the method and the $\h$- and $\p$-error analysis in Section~\ref{section:error-analysis}.
We present numerical results verifying the theoretical results in Section~\ref{section:numerical-results} and state some conclusions in Section~\ref{section:conclusions}.
Three appendices conclude the paper:
in the first one, we show a consistency property of the proposed \DGFEMBEM{} mortar coupling;
in the second one, we construct a discontinuous-to-continuous reconstruction operator on curvilinear meshes
with optimal $\h$- and~$\p$-stability and approximation properties, which is of independent interest;
in the third one, we prove quantitative error estimates that are explicit in $\h$ and~$\p$.

\section{Helmholtz model problem, boundary integral operators, and mortar coupling} \label{section:continuous:problem}

In this section, we describe the model problem, see Section~\ref{subsection:model:problem}, and present its continuous mortar formulation  in Section~\ref{subsection:mortar-coupling}.
The setting is the same as that of \cite[Secs.~2 and~3]{FEMBEM:mortar}.
In order to make this paper self-contained, we report here all the necessary elements, including the definitions and some properties of the boundary integral operators for the 3D Helmholtz problem;
see Section~\ref{subsection:boundary-integral-operators}.

\subsection{Helmholtz model problem}\label{subsection:model:problem}
Let~$\Omega \subset \mathbb R^3$ be a bounded domain with a connected $C^\infty$-smooth
boundary~$\Gamma$ and~$\nGamma$ be the outward pointing unit vector normal to~$\Gamma$.
Denote~$\Omegap:= \mathbb R^3 \setminus \overline{\Omega}$  and
let~$\k \in \mathbb R^+$, $k\ge k_0>0$, denote the wave number.

We assume that~$\Omegap$ is occupied by a homogeneous medium with
both the refractive index $n \in L^\infty(\mathbb{R}^3; \mathbb{C})$ and a scalar-valued positive diffusion coefficient~$\Ad \in C^\infty(\mathbb{R}^3; \mathbb{R})$ 
normalized to~$1$, while~$n$ and~$\Ad$ may vary within~$\Omega$.
That is, the supports of~$1-n$ and $1 - \nu$ are contained
in~$\Omega$ and $\Ad$ satisfies 
$0<\numin\le \Ad(\xbold)\le\numax<+\infty$. 
In addition, we assume that $|n(\xbold)|\ge c_0>0$ a.e. in~$\mathbb R^3$.
Given~$\f \in L^2(\Omega)$ with support contained in $\Omega$, we set
\[
\ftilde = \begin{cases}
\f & \text{in } \Omega,\\
0 & \text{in } \Omegap.\\
\end{cases}
\]
Under the above assumptions on $n$, $\Ad$, and $\ftilde$, there exists an open
  neighborhood $\mathcal N(\Gamma)$ of $\Gamma$ such that
  $n\equiv 1$,
  $\Ad\equiv 1$, and
  $\ftilde\equiv 0$ 
  in $\Omegap\cup \mathcal N(\Gamma)$.

We consider the Helmholtz problem: Find $u: \mathbb R^3 \rightarrow
\mathbb C$ such that
\begin{equation} \label{starting:problem}
\begin{cases}
  -\div (\Ad \nabla u ) - (\kn)^2 u = \ftilde & \text{in } \mathbb R^3,\\
  \displaystyle{\lim_{\vert \xbold \vert \rightarrow +\infty}\vert \xbold \vert}
    (\partial_{\vert\xbold\vert} u - \i\k u )=0. &
\end{cases}
\end{equation}
We rewrite problem~\eqref{starting:problem} as a transmission problem. To that end, we define the following jump operators.
For $v \in H^1(\mathbb R^3 \setminus \Gamma)$, we denote the Dirichlet traces of~$v_{|\Omega}$ and~$v_{|\Omegap}$ on~$\Gamma$ 
by~$\gammazint(v)$ and~$\gammazext(v)$, respectively.
The two Neumann traces $(\partial_{\nGamma} = \nGamma \cdot \nabla)$
on~$\Gamma$ of a piecewise smooth function $v$ are denoted by~$\gammaoint(\varphi)$ and~$\gammaoext(\varphi)$.
For sufficiently smooth functions $v$ defined in $\mathbb R^3 \setminus \Gamma$, we then define the jumps 
\[
\llbracket v \rrbracket_\Gamma = \gammazint(v) - \gammazext(v), \quad
\quad \quad \llbracket \partial_{\nGamma} u
\rrbracket_\Gamma := \gammaoint(v) - \gammaoext(v).
\]
With these jumps in hand, we reformulate~\eqref{starting:problem} as 
looking  for solutions~$u: \mathbb R^3 \rightarrow \mathbb C$ of the following transmission problem:
\begin{equation} \label{transmission:problem}
\begin{cases}
-\div (\Ad \nabla u ) - (\kn)^2 u = \f & \text{in } \Omega,\\
-\Delta u - \k^2 u = 0 & \text{in } \Omegap,\\
\llbracket u \rrbracket_\Gamma =0, \; \llbracket
\partial_{\nGamma} u
\rrbracket_\Gamma =0,\\
\displaystyle{\lim_{\vert \xbold \vert \rightarrow +\infty}\vert \xbold \vert}
    (\partial_{\vert\xbold\vert} u - \i\k u )=0. &
\end{cases}
\end{equation}
Here, we required the boundary~$\Gamma$ to be globally smooth, whereas in Section~\ref{section:dG} below we allow for a piecewise smooth~$\Gamma$.
The global smoothness assumption is needed to promote the regularity of the solution to
problem~\eqref{transmission:problem} below, while the piecewise smoothness assumption is enough for the design of the method.

\subsection{Boundary integral operators} \label{subsection:boundary-integral-operators}
The fundamental solution to the 3D Helmholtz problem is
\[
\Gk (\xbold, \ybold) = \frac{e^{\i\k \vert \xbold - \ybold \vert}}{4\pi \vert \xbold - \ybold\vert}. 
\]
Based on that, we define the single and double layer potentials as follows:
\begin{alignat*}{2}
&  \Vtildek \varphi (\xbold) = \int_\Gamma G_k (\xbold - \ybold) \varphi(\ybold) ds(\ybold)                                          && \quad \forall \xbold \in \mathbb R^3 \setminus \Gamma,\quad \forall \varphi \in H^{-\frac{1}{2}}(\Gamma),\\
&  \Ktildek \varphi (\xbold) = \int_\Gamma \partial_{\nGamma(\ybold)}G_k  (\xbold - \ybold) \varphi(\ybold) ds(\ybold)  && \quad \forall \xbold \in \mathbb R^3 \setminus \Gamma,\quad \forall \varphi \in H^{\frac{1}{2}}(\Gamma).
\end{alignat*}
Starting from the potentials~$\Vtildek$ and~$\Ktildek$, we introduce the four standard boundary integral operators for the Helmholtz operator.
Their properties are widely studied in the literature; see, e.g., \cite{SauterSchwab_BEMbook, steinbach_BEMbook, costabel, mclean2000strongly} and the references therein.
The properties mentioned below have also been summarized in~\cite{FEMBEM:mortar}.

\paragraph*{Single layer operator.}
Define~$\Vk: H^{-\frac{1}{2}}(\Gamma) \rightarrow H^{\frac{1}{2}}(\Gamma)$ as
\begin{equation} \label{single:layer:operator}
\Vk \varphi := \gammazint (\Vtildek \varphi) \qquad \forall \varphi \in H^{-\frac{1}{2}}(\Gamma).
\end{equation}
For $C^\infty$-smooth~$\Gamma$, the operator $\Vk$ extends to~$\Vk :
  H^{-1+s}(\Gamma)\rightarrow H^s(\Gamma)$ for all $s\in\mathbb R$.

\paragraph*{Double layer operator.}
Define~$\Kk: H^{\frac{1}{2}}(\Gamma) \rightarrow H^{\frac{1}{2}}(\Gamma)$ as
\[
\left( -\frac{1}{2} + \Kk  \right) \varphi := \gammazint ( \Ktildek \varphi) \qquad \forall \varphi \in H^{\frac{1}{2}}(\Gamma).
\]
For $C^\infty$-smooth~$\Gamma$, the operator $\Kk$ extends to~$\Kk : H^{s}(\Gamma)
\rightarrow H^s(\Gamma)$ for all $s\in\mathbb R$.

\paragraph*{Adjoint double layer operator.}
Define~$\Kprimek: H^{-\frac{1}{2}}(\Gamma) \rightarrow H^{-\frac{1}{2}}(\Gamma)$ as
\[
\left( \frac{1}{2} + \Kprimek  \right) \varphi := \gammaoint ( \Vtildek \varphi) \qquad \forall \varphi \in H^{-\frac{1}{2}}(\Gamma).
\]
For $C^\infty$-smooth~$\Gamma$, the operator $\Kprimek$ extends to~$\Kprimek :
H^{-s}(\Gamma) \rightarrow H^{-s}(\Gamma)$ for all $s\in\mathbb R$.

\paragraph*{Hypersingular boundary integral operator.}
Define~$\Wk: H^{\frac{1}{2}}(\Gamma) \rightarrow H^{-\frac{1}{2}}(\Gamma)$ as
\[
-\Wk \varphi := \gammaoint(\Ktildek \varphi) \qquad \forall \varphi \in H^{\frac{1}{2}}(\Gamma).
\]
For $C^\infty$-smooth~$\Gamma$, the operator $\Wk$ extends to~$\Wk :
H^{s}(\Gamma)\rightarrow H^{-1+s}(\Gamma)$ for all $s\in\mathbb R$.
\medskip

Let $\Vz$, $\Kz$, $\Kprimez$, and~$\Wz$ be the corresponding integral operators for zero
wave number  $k = 0$. 
Then, for all $s\ge 0$, the difference operators are linear bounded operators in the following spaces
\begin{equation} \label{compact:part}
\begin{split}
& \Vk-\Vz: H^{-\frac{1}{2}+s}(\Gamma) \rightarrow H^{\frac{5}{2}+s} (\Gamma), \quad \quad \Kk - \Kz: H^{\frac{1}{2}+ s}(\Gamma) \rightarrow H^{\frac{5}{2} +s}( \Gamma),\\
& \Kprimek-\Kprimez: H^{-\frac{1}{2}+s}(\Gamma) \rightarrow H^{\frac{3}{2}+s} (\Gamma), \quad \quad \Wk - \Wz: H^{\frac{1}{2}+ s}(\Gamma) \rightarrow H^{\frac{3}{2} +s}( \Gamma).
\end{split}
\end{equation}
In other words, the difference operators possess enhanced shift properties with respect to those of each term in the difference;
see, e.g., \cite[Prop.~{2.2}]{FEMBEM:mortar} and~\cite[Thm.~{7.2}]{mclean2000strongly}.
Moreover, $\Vz$ and $\Wz$
satisfy the following properties: there exist positive constants~$c_{\Vz}$,
$C_{\Vz}$, $c_{\Wz}$, and~$C_{\Wz}$ such that
\begin{alignat}{2} 
c_{\Vz}\Vert \varphi \Vert_{-\frac{1}{2}, \Gamma}^2 &\le \langle \varphi ,\Vz \varphi \rangle \le C_{\Vz}\Vert \varphi \Vert_{-\frac{1}{2}, \Gamma}^2  && \quad \forall \varphi \in H^{-\frac{1}{2}} (\Gamma), \label{coercivity:operators:Laplace}\\
c_{\Wz} \vert \varphi\vert_{\frac{1}{2}, \Gamma}^2 &\le \langle \Wz \varphi , \varphi \rangle \le C_{\Wz}\vert \varphi \vert_{\frac{1}{2}, \Gamma}^2 && \quad \forall \varphi \in H^{\frac{1}{2}} (\Gamma)/\mathbb C.\notag
\end{alignat}
We also have the following properties:
\[
\Vz^* = \V_0, \qquad \Kz^* = \Kprimez, \qquad \Wz^*=\Wz.
\]

\subsection{Mortar coupling} \label{subsection:mortar-coupling}
 
In this section, we recall the mortar coupling described in~\cite{FEMBEM:mortar}.
Instead of looking for solutions to~\eqref{transmission:problem}, we aim to solve the following three coupled problems for~$u: \Omega \rightarrow \mathbb C$ and $\uext$, $\m:\Gamma \rightarrow \mathbb C$:
\begin{align} 
& \begin{cases}
- \div(\Ad \nabla  u)    - (\kn)^2 u = \f & \text{in } \Omega,\\
\partial _{\nGamma} u + i \k  u - \m =0 & \text{on } \Gamma,\\
\end{cases} \label{first:system:Helmholtz}\\
& \begin{cases}
\uext = \PItD \m & \quad \quad \quad \quad \quad\, \text{on } \Gamma
\end{cases},\label{second:system:Helmholtz}\\
& \begin{cases}
u - \left[ \left( \frac{1}{2} + \Kk  \right) \uext - \Vk (\m - i\k \uext)   \right] = 0.
\end{cases} \label{third:system:Helmholtz}
\end{align}
The operator~$\PItD: H^{-\frac{1}{2}}(\Gamma) \rightarrow
H^{\frac{1}{2}}(\Gamma)$ appearing in~\eqref{second:system:Helmholtz}
maps the impedance mortar variable~$\m$ to the Dirichlet trace~$\uext$
of the solution to the exterior problem. This operator was defined, e.g., in~\cite[pp.~{124--126}]{actaBEMhelmholtz}.
In order to characterize it explicitly, we introduce the combined integral operators
\begin{equation} \label{definition:Bk:and:Akprime}
\Bk := -\Wk - i k \left( \frac{1}{2} - \Kk \right),\quad \quad \Aprimek := \frac{1}{2} + \Kprimek + i k \Vk
\end{equation}
and recall their mapping properties, see, e.g., \cite[Thm.~{2.27}]{actaBEMhelmholtz}:
\[
    \Bk: H^{s+\frac{1}{2}}(\Gamma) \rightarrow H^{s-\frac{1}{2}}(\Gamma), \quad \quad \Aprimek: H^{s-\frac{1}{2}}(\Gamma) \rightarrow H^{s-\frac{1}{2}}(\Gamma)
\]
are bounded.
Then, equation~\eqref{second:system:Helmholtz} is equivalent to
\begin{equation} \label{formula:Chandler-Wilde}
\Bk  \uext + i \k \Aprimek (\uext) -   \Aprimek \m = 0;
\end{equation}
see~\cite[Prop.~{3.2}]{FEMBEM:mortar} and the references therein.

The variational formulation of
problem~\eqref{first:system:Helmholtz}--\eqref{third:system:Helmholtz}
reads as follows:
\begin{equation} \label{weak:formulation:mortar:Helmholtz}
\begin{cases}
\text{Find } (u,\m, \uext) \in H^1(\Omega) \times H^{-\frac{1}{2}}(\Gamma) \times H^{\frac{1}{2}}(\Gamma) \text{ such that}&\\
(\Ad\nabla u, \nabla v)_{0,\Omega} - ((\kn)^2 \,u, v)_{0,\Omega} + i (\k u,v)_{0,\Gamma} - \langle \m, v \rangle = (\f, v)_{0,\Omega}& \forall v \in H^1(\Omega),\\
\langle  (\Bk + i \k  \Aprimek)\uext  - \Aprimek \m, \vtilde  \rangle = 0 & \forall \vtilde \in H^{\frac{1}{2}}(\Gamma),\\
\langle  u, \lambda \rangle - \langle  (\frac{1}{2} + \Kk) \uext - \Vk (\m - i\k\uext), \lambda\rangle =  0 & \forall \lambda \in H^{-\frac{1}{2}}(\Gamma).
\end{cases}
\end{equation}
As in~\cite{FEMBEM:mortar}, we introduce
\begin{equation} \label{form:T:for:Helmholtz}
\begin{split}
\T(	& (u, \m, \uext) , (v,\lambda,\vtilde) ) = (\Ad\nabla u, \nabla v )_{0,\Omega} -  ((\kn)^2 \,u, v)_{0,\Omega} + i\k ( u, v)_{0,\Gamma} - \langle \m, v \rangle \\
	& - \langle  (-\Wk- i \k ( \frac{1}{2} - \Kk  )  + i \k   ( \frac{1}{2} +\Kprimek + i \k \Vk ) )\uext \\
	& -  ( \frac{1}{2} +\Kprimek + i \k \Vk ) \m, \vtilde   \rangle      +  \langle u, \lambda \rangle -  \langle  (\frac{1}{2} + \Kk) \uext - \Vk (\m - i \k \uext), \lambda \rangle.\\
\end{split}
\end{equation}
Then, we can rewrite problem~\eqref{weak:formulation:mortar:Helmholtz} in compact form:
\begin{equation} \label{equivalent:weak:formulation}
\begin{cases}
\text{Find } (u, \m, \uext) \in H^1(\Omega) \times H^{-\frac{1}{2}}(\Gamma) \times H^{\frac{1}{2}}(\Gamma)\text{ such that}\\
\T( (u,\m, \uext) , (v,\lambda,\vtilde) ) = (\f, v)_{0,\Omega} \quad \forall (v,\lambda,\vtilde) \in H^1(\Omega) \times H^{-\frac{1}{2}}(\Gamma) \times H^{\frac{1}{2}}(\Gamma).
\end{cases}
\end{equation}
In~\cite[Thm.~{3.5}]{FEMBEM:mortar}, the well posedness of~\eqref{equivalent:weak:formulation} was proven, under the assumption of smoothness of~$\Gamma$ and uniqueness of the solution to problem~\eqref{transmission:problem},
based on the following G{\aa}rding inequality:

\begin{thm}(\!\!\cite[Thm.~{3.6}]{FEMBEM:mortar}) \label{theorem:Garding:inequality:cont}
Let~$\T(\cdot, \cdot)$ be defined as in~\eqref{form:T:for:Helmholtz}, and assume that the interface~$\Gamma$ is smooth.
Then, there exists~$c > 0$ depending only on $k_0$ and $\Omega$ and,
for each $k \ge k_0$, there is a positive constant~$\cG(k)$ depending on~$k$ and $\Omega$,
such that, for all $(v,\lambda,\vext)\in H^1(\Omega) \times H^{-\frac{1}{2}}(\Gamma) \times H^{\frac{1}{2}} (\Gamma)$,
\[
\begin{split}
\Re ( \T( (v,\lambda,\vext),   (v,\lambda,\vext) ) )
& \ge c \left\{ 
\| \Ad^{1/2} \nabla v \|^2_{0,\Omega}
+ \Vert \lambda \Vert^2_{-\frac{1}{2},\Gamma}
+ \Vert \vext
\Vert^2_{\frac{1}{2},\Gamma} 
\right\}  \\
& \quad - \left\{k^2 \Vert n\,v \Vert^2_{0,\Omega} 
+\cG(k) \left(\Vert \lambda \Vert_{-\frac{5}{2}, \Gamma}^2  + \Vert \vext \Vert_{-\frac{1}{2}, \Gamma}^2\right) \right\}.
\end{split}
\]
\end{thm}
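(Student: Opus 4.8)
The plan is to evaluate the form on the diagonal, i.e.\ to study $\T((v,\lambda,\vext),(v,\lambda,\vext))$, to expand the combined operators $\Bk$ and $\Aprimek$ via their definitions in~\eqref{definition:Bk:and:Akprime}, and then to take the real part, systematically exploiting the self-adjointness relations $\Vz^\ast=\Vz$, $\Wz^\ast=\Wz$, and $\Kz^\ast=\Kprimez$. First I would split every boundary integral operator into its zero-wavenumber part plus a remainder, e.g.\ $\Vk=\Vz+(\Vk-\Vz)$ and likewise for $\Kk$, $\Kprimek$, $\Wk$. The remainders are smoothing by the enhanced shift estimates~\eqref{compact:part}, so that, after Cauchy--Schwarz and Young's inequality, every term containing a remainder (including those carrying explicit factors of $\k$) is absorbed into $\epsilon\big(\|\lambda\|_{-\frac12,\Gamma}^2+\|\vext\|_{\frac12,\Gamma}^2\big)$ plus a $\k$-dependent multiple of the weaker quantities $\|\lambda\|_{-\frac52,\Gamma}^2+\|\vext\|_{-\frac12,\Gamma}^2$; the latter are exactly the terms admissible in the negative part with constant $\cG(\k)$.

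The next step is to identify which zero-wavenumber terms survive the real part. The volume contribution $(\Ad\nabla v,\nabla v)_{0,\Omega}=\|\Ad^{1/2}\nabla v\|_{0,\Omega}^2$ is real and supplies the first coercive term, while $i\k\|v\|_{0,\Gamma}^2$ is purely imaginary and drops. The coupling terms $-\langle\lambda,v\rangle+\langle v,\lambda\rangle$ are complex conjugates of one another, hence purely imaginary. Using the adjoint identities, the same holds for all boundary cross terms and for the antisymmetric diagonal terms: the $\tfrac12$-terms give $i\,\Im\langle\lambda,\vext\rangle$, the pair $\langle\Kprimez\lambda,\vext\rangle-\langle\Kz\vext,\lambda\rangle$ equals $2i\,\Im\langle\lambda,\Kz\vext\rangle$, and the diagonal combination $-i\k\big(\langle\Kz\vext,\vext\rangle+\langle\Kprimez\vext,\vext\rangle\big)=-2i\k\,\Re\langle\Kz\vext,\vext\rangle$ is again purely imaginary. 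Thus the symmetric design of the coupling is precisely what makes the real parts of all indefinite $\Kk$/$\Kprimek$ contributions vanish up to smoothing remainders. Finally, the real part of $-((\kn)^2v,v)_{0,\Omega}$ is bounded below by $-\k^2\|nv\|_{0,\Omega}^2$, which is the first negative term.

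The crucial algebraic observation concerns the single-layer operator. The four terms carrying $\Vk$ — namely $\langle\Vk\lambda,\lambda\rangle$ from the last block, $\k^2\langle\Vk\vext,\vext\rangle$ from the second, and the cross terms $i\k\langle\Vk\lambda,\vext\rangle$ and $-i\k\langle\Vk\vext,\lambda\rangle$ — assemble exactly into the single quadratic expression $\langle\Vk(\lambda-i\k\vext),(\lambda-i\k\vext)\rangle$. Splitting $\Vk=\Vz+(\Vk-\Vz)$ and invoking the coercivity~\eqref{coercivity:operators:Laplace} of $\Vz$, its real part is bounded below by $c_{\Vz}\|\lambda-i\k\vext\|_{-\frac12,\Gamma}^2$; a reverse triangle inequality then yields the coercive contribution $\tfrac{c_{\Vz}}{2}\|\lambda\|_{-\frac12,\Gamma}^2$ at the price of a negative lower-order term $c_{\Vz}\k^2\|\vext\|_{-\frac12,\Gamma}^2$, legitimately charged to $\cG(\k)$. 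This is the step that tames the apparently dangerous order-$\k$ cross terms: they are not estimated separately but completed into a nonnegative square together with the order-$1$ and order-$\k^2$ diagonal single-layer terms. For the $\vext$-coercivity I would use the surviving hypersingular term, $\Re\langle\Wz\vext,\vext\rangle\ge c_{\Wz}|\vext|_{\frac12,\Gamma}^2$, and then upgrade the seminorm to the full norm through the Peetre--Tartar compactness inequality $\|\vext\|_{\frac12,\Gamma}^2\lesssim|\vext|_{\frac12,\Gamma}^2+\|\vext\|_{-\frac12,\Gamma}^2$, again paying only a lower-order $\|\vext\|_{-\frac12,\Gamma}^2$ term.

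To conclude, I would collect the three coercive contributions $\|\Ad^{1/2}\nabla v\|_{0,\Omega}^2$, $\|\lambda\|_{-\frac12,\Gamma}^2$, and $\|\vext\|_{\frac12,\Gamma}^2$ with a common $\k$-independent constant $c$, absorb all the $\epsilon$-terms generated above by fixing $\epsilon$ small enough, and gather the remaining $\k$-dependent pieces into $-\k^2\|nv\|_{0,\Omega}^2-\cG(\k)\big(\|\lambda\|_{-\frac52,\Gamma}^2+\|\vext\|_{-\frac12,\Gamma}^2\big)$. The main obstacle, and the heart of the argument, is the single-layer completion of the square in the previous paragraph: it simultaneously neutralises the indefinite order-$\k$ coupling terms and produces the $\|\lambda\|_{-\frac12,\Gamma}^2$ coercivity. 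A secondary point demanding care is the bookkeeping of the powers of $\k$, so that all $\k$-dependence is confined to the weaker, compactly embedded norms multiplying $\cG(\k)$, leaving the coercivity constant $c$ independent of $\k$.
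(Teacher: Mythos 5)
Your proposal is correct, and the key algebra checks out: on the diagonal, the four single-layer terms do assemble exactly into $\langle \Vk(\lambda-\i\k\vext),\,\lambda-\i\k\vext\rangle$ (the pairing is antilinear in its second slot, so conjugating $-\i\k$ produces precisely the $+\i\k\langle\Vk\lambda,\vext\rangle$ cross term), and the $\tfrac12$-, $\Kz$- and volume--boundary coupling terms are purely imaginary, hence drop under plain $\Re$ in this conforming setting. Note, though, that the paper does not prove this statement itself --- it quotes it from \cite{FEMBEM:mortar} --- so the natural in-paper comparison is with the discrete analogue, Theorem~\ref{theorem:discrete-Garding}, whose proof shares your skeleton (zero-wavenumber splitting, coercivity of $\Vz$ and $\Wz$, smoothing remainders via~\eqref{compact:part} pushed into $\cG(\k)$) but treats the single-layer block term by term: $\overline{\langle\lambdah,\Vk\lambdah\rangle}$ via $\Vz$-coercivity ($T_1$), $\k^2\langle\Vk\vexth,\vexth\rangle$ dumped wholesale into $-c_3(\k)\Vert\vexth\Vert^2_{-\frac12,\Gamma}$ ($T_3$), and the order-$\k$ cross terms estimated separately with the smoothing of the full operator $\Vk$ plus Young, absorbing fractions of the $c_{\Wz}$-coercivity ($T_7$, $T_8$); likewise $-\i\k\langle(\Kk+\Kprimek)\vexth,\vexth\rangle$ is absorbed into $\tfrac{2}{5}c_{\Wz}\Vert\vexth\Vert^2_{\frac12,\Gamma}$ ($T_4$) rather than identified as imaginary-up-to-remainder --- unavoidable there, since the discrete inequality takes $\Re+\varepsilon\Im$ and the nonconforming traces additionally force the terms $T_6$, $T_9$, $T_{10}$ and the reconstruction operator of Theorem~\ref{lemma:Karakashian-style}, none of which have continuous counterparts. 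Your completion of the square buys something concrete: the term-by-term route yields the compact norm $\Vert\lambda\Vert_{-\frac32,\Gamma}$ (as in the discrete theorem), whereas folding the cross terms into the nonnegative square lets every remainder exploit the full three-order gain of $\Vk-\Vz$ and lands exactly on the weaker $\Vert\lambda\Vert_{-\frac52,\Gamma}$ of the continuous statement. Two points you should make explicit in a write-up: the difference estimates~\eqref{compact:part} are stated for $s\ge 0$ and you need them at negative indices (e.g.\ $\Vk-\Vz:H^{-\frac52}(\Gamma)\to H^{\frac12}(\Gamma)$), which holds for smooth $\Gamma$ and is already used at $s=-1$ in the paper's discrete proof; and the seminorm upgrade $\Vert\vext\Vert^2_{\frac12,\Gamma}\lesssim\vert\vext\vert^2_{\frac12,\Gamma}+\Vert\vext\Vert^2_{-\frac12,\Gamma}$ deserves a line, e.g.\ by bounding the best constant in $\vert\vext\vert_{\frac12,\Gamma}=\inf_{c}\Vert\vext-c\Vert_{\frac12,\Gamma}$ through the mean value of $\vext$, which is controlled by $\Vert\vext\Vert_{-\frac12,\Gamma}$.
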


\section{\DGFEMBEM{} mortar coupling} \label{section:dG}
We  introduce a discontinuous Galerkin finite element method-boundary element method (\DGFEMBEM) 
for the discretization of problem~\eqref{first:system:Helmholtz}--\eqref{third:system:Helmholtz}.
As a \dG\ discretization of~\eqref{first:system:Helmholtz} in the interior domain~$\Omega$, we use the
method introduced in~\cite{MelenkParsaniaSauter_generalDGHelmoltz},
which is based on the same variational formulation as that
of~\cite{TDGPW_pversion}.
For the sake of completeness, we recall the main steps of its
derivation in Section~\ref{subsection:dGOmega} below, in case of a smooth
coefficient $\Ad$.
Equation~\eqref{second:system:Helmholtz} is discretized as
in~\cite{FEMBEM:mortar}, while the discretization
of~\eqref{third:system:Helmholtz} is obtained by a suitable
modification described in Section~\ref{subsection:second-third-equation} of what is proposed in~\cite{FEMBEM:mortar}.
The complete discrete formulation
is summarized in Section~\ref{subsection:complete}. 

\subsection{\dG\ discretization of~\eqref{first:system:Helmholtz}}\label{subsection:dGOmega}

We shall work with regular, shape regular meshes of the (curved) domain $\Omega$. 
That is, the meshes will have no hanging nodes and the parametrizations of common edges or faces 
induced by the element maps of neighboring elements match; see \cite[Def.~{2.2}]{LiMelenkWohlmuthZou} for 
the precise statement.

As in~\cite[Def.~{2.2}]{LiMelenkWohlmuthZou}, we define a {\it curved
  $d$-simplex}~$K$, $d=2,3$, as the image of a reference straight
$d$-simplex $\widehat K$ through a $C^1$-diffeomorphism 
$\Phi_K$ satisfying
\begin{equation}\label{eq:SR}
  \|D \Phi_K\|_{L^\infty(\widehat K)}\le\gamma_{SR} h_K,\qquad
  \|(D \Phi_K)^{-1}\|_{L^\infty(\widehat K)}\le\gamma_{SR}h_K^{-1},
  \end{equation}
where $D \cdot$ denotes the Jacobian and $\gamma_{SR}>0$ is the shape regularity constant.

Condition~\eqref{eq:SR} implies that $\Phi_K$ can be decomposed as
$\Phi_K=\Phi_K^\Delta+\Psi_K$, where $\Phi_K^\Delta$ is an affine bijection and
$\Psi_K$ is a $C^1$ mapping such that
\begin{equation}\label{eq:Bernardi}
c_K:=\sup_{\widehat\xbold\in \widehat K}\|D\Psi_K(\widehat\xbold)\cdot
(D \Phi_K^\Delta)^{-1}\|_{L^\infty(\widehat K)}\lesssim 1.
\end{equation}
To see~\eqref{eq:Bernardi}, it is enough to fix any $\widehat\xbold_0\in \widehat K$
and take $\Phi_K^\Delta:=\Phi_K(\widehat\xbold_0)+D
\Phi_K(\widehat\xbold_0)(\widehat\xbold-\widehat\xbold_0)$. This
gives~\eqref{eq:Bernardi} with $\gamma_{SR}^2+1$ on the right-hand side.
A face $F$ of a curved 3-simplex $K$ is the image through $\Phi_K$ of
a face $\widehat F$ of $\widehat K$.

Let~$\{\taun\}_h$ be a sequence of 
conforming, i.e., regular in the sense described above, decompositions of~$\Omega$ into curved 3-simplices
with mesh granularity~$\h$. For $h$ sufficiently small, $c_K\le c<1$ for all $K\in \taun$.
The union of the (open) internal and boundary faces of $\taun$ are denoted by~$\FhI$
and $\FhB$, respectively.
We assume that all the faces in~$\FhI$ are flat. The faces in~$\FhB$ are curved 2-simplices.

Given an element~$\E \in \taun$, denote its diameter by $\hE$ and the outward pointing unit vector normal to~$\partial K$ by~$\nE$.
We introduce the mesh size function $\msf:\overline\Omega\to \mathbb R$,
where~$\msf_{|_\E}=\hE$ for all~$\E\in \taun$, $\msf=\min\{\hEone,\hEtwo\}$ on each face in~$\FhI$ shared by $\E_1$ and ~$\E_2$, and~$\msf=\hE$ on each face in~$\FhB$ on~$\partial \Omega$.
We may fix~$\msf$ arbitrarily at mesh vertices and on edges because we shall not need it there.

To derive the DG formulation, we write the first equation of~\eqref{first:system:Helmholtz} in mixed form:
\[
\begin{cases}
\i \k\, \sigmabold = \Ad\,\nabla u  \\
-\i \k\div(\sigmabold) - (kn)^2 u = \f 
\end{cases}
\quad \text{in } \Omega.
\]
On each element~$\E \in \taun$, we multiply the above two
  equations by smooth functions $\taubold$ and $v$, respectively, and
  integrate by parts:
\begin{equation}\label{1st-weak-dG}
\begin{cases}
\displaystyle{\int_\E \i \k \,\sigmabold \cdot \overline{\taubold} +
  \int_\E u \, \overline{{\div(\Ad\, \taubold)}} - \int_{\partial \E}
  \Ad\, u \,\overline{\taubold \cdot \nE} = 0},
\\[0.2cm]
\displaystyle{\int_\E \i \k\,\sigmabold \cdot \overline{\nabla v}  - \int_{\partial \E} \i \k\,\sigmabold \cdot \nE \,\overline{v} - \int_\E (kn)^2 u\, \overline{v} = \int_\E  \f \,\overline {v}}. 
\end{cases}
\end{equation}
We replace the traces of~$u$ and~$\sigmabold \cdot \nE$ in the integral on~$\partial \E$ with suitable numerical fluxes~$\uhat$
and~$\sigmaboldhat \cdot \nE$, respectively, which will be defined later on in~\eqref{eq:numericalfluxes}.
Thus, we replace~$u_{|\partial \E}$ with~$\uhat_{|\partial \E}$ in
the first equation of~\eqref{1st-weak-dG}, apply one more
integration by parts, select~$\taubold = \nabla v$, and end up with
\begin{equation} \label{1st-equation-substitute}
\int_\E \i\k\, \sigmabold \cdot \overline{\nabla v} - \int_\E \Ad\,\nabla u \cdot \overline{\nabla v} + \int_{\partial \E} \Ad\, (u - \uhat) \,\overline{\nabla v \cdot \nE} = 0. 
\end{equation}
Next, we replace~$\sigmabold \cdot \nE{}_{|\partial \E}$
with~$\sigmaboldhat \cdot \nE{}_{|\partial \E}$ in the second equation
of~\eqref{1st-weak-dG}, and obtain
\begin{equation} \label{2nd-equation-substitute}
\int_\E \i \k \,\sigmabold \cdot \overline{\nabla v} - \int_{\partial
  \E} \i\k\, \sigmaboldhat \cdot \nE \overline {v} - \int_\E (\kn)^2
u\, \overline{v} = \int_\E \f \,\overline {v}. 
\end{equation}
Subtracting~\eqref{1st-equation-substitute}
from~\eqref{2nd-equation-substitute} and adding over all $K\in \taun$
lead to the following broken variational formulation:
\begin{equation} \label{a:linear:combination}
\begin{cases}
\text{Find } u \in \Hpw^1(\Omegah) \text{ such that for all } v \in \Hpw^1(\Omegah) \\[0.2cm]
\displaystyle{\sum_{\E\in\taun}}\left(\int_\E \Ad\,\nabla u \cdot \overline{\nabla v} - \int_{\partial \E} \Ad\,(u - \uhat) \,\overline{\nabla v \cdot \nE}\right) - \int_{\Gamma} \i \k\, \sigmaboldhat \cdot \nGamma \overline{v} - \int_{\Omega} (\kn)^2 u \,\overline{v}= \int_{\Omega} \f \,\overline{v},
\end{cases}
\end{equation}
where
\[
\Hpw^1(\Omegah):=\{v\in L^2(\Omega):\, v{}_{|\E}\in H^1(K)\ \forall K\in \taun\}.
\]
In order to complete the definition of the \dG\ method, we need to
choose finite dimensional subspaces of $\Hpw^1(\Omegah)$ and define the numerical fluxes. 

To that end, we introduce the following notation for spaces of mapped, piecewise polynomial functions of finite degree.
Let $D\subset\mathbb R^3$ be an open, bounded Lipschitz domain with piecewise $C^\infty$-smooth boundary, and~$D_h$ a partition of~$D$ into curved simplices with flat internal faces.
Let $\ell\in\mathbb N_0$, and denote by $\mathbb P_\ell(\cdot)$ the space of
polynomials of degree at most $\ell$ on the domain within the brackets. For $\ell\ge 1$ and $r=0,1$, we set
\[
\mathcal S^{\ell, r}(D, D_h)=\{v\in H^r (D):\,
     v_{|_K}\circ \Phi_K\in\mathbb P_\ell(\widehat K)\quad \forall K\in D_h
     \}.
   \]
For later use, we also define mapped, piecewise polynomial spaces on
surface meshes. To that end, we assume that $S\subset\mathbb R^3$ is a
closed, piecewise $C^\infty$-smooth surface and let $S_h$ be a partition of
$S$ into curved 2-simplices, which is the trace of a partition $D_h$ of
its interior $D$ as above. For $\ell\ge 1$ and $r=0,1$, we set
  \[
\mathcal S^{\ell, r}(S, S_h)=\{v\in H^r(S):\,
     v_{|_F}\circ \Phi_{K_F}\in\mathbb P_\ell(\widehat F)\quad \forall F\in S_h
     \},
   \] 
where $K_{F}$ is the element of $D_h$ with $F$ as a face.

As for the \dG{} discretization of~\eqref{a:linear:combination},
we choose discretization spaces made by discontinuous piecewise polynomial functions:
\[
\Vh := \mathcal S^{p,0}(\Omega,\Omega_h),
\]
and the numerical fluxes introduced in~\cite{MelenkParsaniaSauter_generalDGHelmoltz,TDGPW_pversion}.
We recall their definition in the case of a smooth coefficient $\Ad$.
We first introduce the following notation for the jump and the average functionals on~$\FhI$
for smooth, scalar functions~$v$ and vector-valued functions~$\taubold$.
At any~$\xbf \in \FhI$ shared by the two elements $\E_{\xbf}^1$
and~$\E_{\xbf}^2$, 
the jumps $\llbracket v
\rrbracket$ and $\llbracket \taubold \rrbracket$, 
and the averages $\ldc  v   \rdc$ and $\ldc  \taubold \rdc$
are defined as 
\[
\begin{split}
& \llbracket v \rrbracket (\xbf) := v_{|\E_{\xbf}^1} (\xbf) \,\n_{\E_{\xbf}^1}  +  v_{|\E_{\xbf}^2}(\xbf) \, \n_{\E_{\xbf}^2}, \quad \quad \ldc  v   \rdc (\xbf) = \frac{1}{2} (v_{|\E_{\xbf}^1} (\xbf) + v_{|\E_{\xbf}^2}(\xbf) ), \\
& \llbracket \taubold \rrbracket (\xbf)  := \taubold_{|\E_{\xbf}^1}(\xbf)  \cdot\n_{\E_{\xbf}^1} + \taubold_{|\E_{\xbf}^2} (\xbf) \cdot\n_{\E_{\xbf}^2}, \quad \quad  \ldc  \taubold \rdc (\xbf)  :=\frac{1}{2} (\taubold _{|\E_{\xbf}^1} (\xbf) + \taubold_{|\E_{\xbf}^2}(\xbf) ).
\end{split}
\]
Then, given functions~$\alpha,\beta>0$ in~$L^\infty(\FhI)$ and~$\delta \in (0, 1/2]$ in~$L^\infty(\FhB)$, we define the following numerical fluxes:
\begin{equation}\label{eq:numericalfluxes}
\begin{split}
 \i \k \,\sigmaboldhat = \i \k \,\sigmaboldhat (u)
 &:=
\begin{cases}
\nu\ldc  \nabla_h u   \rdc - \i \k\,\Ad \alpha\, \llbracket u \rrbracket
& \text{on }\FhI,\\
\nabla _\h u - (1-\delta) (\nablah u + \i \k \,u \,\nGamma - \m \,\nGamma)
& \text{on } \FhB,\\
\end{cases}\\
\uhat=\uhat(u) 
&:= 
\begin{cases}
\ldc u \rdc - (\i\k)^{-1} \beta\, \llbracket \nabla _\h u \rrbracket 			& \text{on }\FhI,\\
u + \delta (-(\i \k)^{-1} \nabla_h u \cdot \nGamma - u + (\i\k)^{-1}\m)		& \text{on }\FhB,\\
\end{cases}
\end{split}
\end{equation}
where~$\nabla_\h$ denotes the elementwise application of the gradient
operator, and
we recall that~$\m$ denotes the impedance boundary datum in~\eqref{first:system:Helmholtz}.

Given positive constants~$\afrak$, $\bfrak$, and~$\dfrak$, with~$\afrak$ sufficiently large, see Remark~\ref{remark:choice-dG-parameters} below, and~$\bfrak$ and~$\dfrak$ sufficiently small,
see~\eqref{smallness-delta}, the functions~$\alpha$, $\beta$ and~$\delta$ are chosen as
\begin{equation} \label{dG-parameters}
\begin{split}
\alpha(\xbf) = \afrak \,
    &\frac{\p^2}{\k \msf(\xbf)}, \qquad \beta(\xbf) = \bfrak \,\frac{\k\msf(\xbf)}{\p} \qquad  \forall \xbf \in \FhI, \quad \delta(\xbf) =\dfrak \frac{\k\msf(\xbf)}{\p^2} \quad  \forall \xbf \in \FhB.
\end{split}
\end{equation}
The assumption $\delta \in (0,1/2]$ implies $\dfrak \in (0,\p^2/(2\k\hE))$.

The fluxes defined in~\eqref{eq:numericalfluxes} are single-valued on interior mesh faces and consistent,
which entails the consistency of the resulting \dG{} scheme (Lemma~\ref{lemma:consistency}).  Furthermore, they 
satisfy the following \emph{combined consistency} property:
\[
\i\k \,\sigmaboldhat \cdot \nGamma + \i \k \,\uhat =\m \quad \quad\text{on } \FhB.
\]
In the error analysis, we deal with
the interior \dG{}-error $u-\uh$, which is locally smooth but globally only in $L^2(\Omega)$.
Thus, for $r>0$, we introduce the broken Sobolev spaces on $\Omega_h$ as
\begin{align*}
  \Hpw^r(\Omegah):=
  \{v\in L^2(\Omega):\,
    v{}_{|\E}\in H^r(K)\ \forall K\in \taun\}.
\end{align*}
We also define the following two DG norms, which will be used in the analysis: 
Given~$v\in \Hpw^{\frac{3}{2} + \tepsilon} (\Omegah)$, with~$\tepsilon >0$
arbitrarily small, we define
\begin{equation} \label{dG:norm:1}
\begin{split}
\Vert v \Vert ^2_{\DG}	
& := \Vert \nu^{1/2}\,\nablah v\Vert^2_{0,\Omega} +  \Vert \kn\, v \Vert^2_{0,\Omega} + \k^{-1} \Vert \nu^{1/2}\beta^{1/2} \llbracket \nabla _\h v\rrbracket \Vert^2_{0,\FhI}  + \k \Vert \nu^{1/2}\alpha^{1/2} \llbracket v \rrbracket \Vert^2_{0,\FhI}\\
& \quad + \k^{-1} \Vert \delta ^{1/2} \nablah v \cdot \nGamma \Vert^2_{0,\Gamma} + \k \Vert (1-\delta) ^{1/2} v \Vert^2 _{0,\Gamma},
\end{split}
\end{equation}
and
\[
\begin{split}
\Vert v \Vert ^2_{\DGp} := \Vert v \Vert ^2_{\DG} + \k^{-1} \Vert \nu^{1/2}\alpha^{-1/2} \ldc \nablah v  \rdc \Vert^2_{0,\FhI}.
\end{split}
\]
In Section~\ref{section:continuous:problem}, we required the boundary~$\Gamma$ to be globally smooth,
whereas in this section we can allow for a piecewise smooth~$\Gamma$.
The global smoothness assumption is needed to promote the regularity of the solution to
problem~\eqref{transmission:problem}, while the piecewise smoothness assumption is enough for the design of the method.

\subsection{BEM discretization of~\eqref{second:system:Helmholtz} and discretization of~\eqref{third:system:Helmholtz}}   \label{subsection:second-third-equation}
On $\Gamma$, we introduce the curved simplicial mesh~$\Gammah$, whose elements are given by the intersection of the elements in~$\taun$ and~$\Gamma$.
As already mentioned, for~\eqref{second:system:Helmholtz},
whose variational formulation is given by 
the second equation in~\eqref{weak:formulation:mortar:Helmholtz}, we use
the same discretization as in~\cite[eqns.~(3.8) and (4.1)]{FEMBEM:mortar}, namely, a 
standard conforming BEM method with approximation spaces
\[
\Zh:=\mathcal S ^{\p,1} (\Gamma, \Gammah)\qquad\text{and}\qquad \Wh := \mathcal S ^{\p-1,0} (\Gamma, \Gammah)
\]
for $\uext$ and $m$, respectively.

  Next, we focus on the discretization of~\eqref{third:system:Helmholtz},
whose variational formulation is given by 
the third equation in~\eqref{weak:formulation:mortar:Helmholtz}.
Compared to what was done in~\cite{FEMBEM:mortar}, we add suitable terms that will allow us to prove a discrete G{\aa}rding inequality, see Theorem~\ref{theorem:discrete-Garding} below,
and retain consistency and adjoint consistency, see Lemma~\ref{lemma:consistency} and Proposition~\ref{lemma:adjointconsistency} below.
To that end, it is convenient to write the integral terms on~$\Gamma$ appearing in the \dG\ discretization in the interior domain $\Omega$ explicitly.
Using the definition of the numerical fluxes~\eqref{eq:numericalfluxes} on~$\FhB$, we write
\[
\begin{split}
& -\int_\Gamma (\uh - \uhhat) \overline{\nabla \vh \cdot \nGamma} - \int_\Gamma \i\k \sigmaboldhhat \cdot \nGamma \overline{\vh} \\
& = -\int_\Gamma - \delta (-(\i\k)^{-1} \nablah \uh \cdot \nGamma - \uh + (\i\k)^{-1} \mh) \overline{\nablah \vh \cdot \nGamma}\\
& \quad -\int_\Gamma (\nablah \uh \cdot \nGamma - (1-\delta) (\nablah \uh \cdot \nGamma + \i \k \uh - \mh) )  \overline{\vh}\\
& = -\int_\Gamma \delta (\i\k)^{-1} \nablah \uh\cdot \nGamma \overline{\nabla _\h \vh \cdot \nGamma} - \int_\Gamma \delta \uh \overline{\nablah \vh \cdot \nGamma} + \int_\Gamma \delta (\i\k)^{-1} \mh \overline{\nablah \vh \cdot \nGamma}\\
& \quad - \int_\Gamma \delta\nabla_h \uh \cdot \nGamma \overline{\vh} + \int_{\Gamma} (1-\delta) \i \k \uh \overline{\vh} - \int_\Gamma (1-\delta) \mh \overline {\vh}.
\end{split}
\]
Therefore, the contribution from the interior discretization to the coupling, i.e., the terms involving~$\mh$, is
\begin{equation}\label{eq:coupledterms}
\begin{split}
\int_\Gamma \delta (\i \k)^{-1} \mh \overline{\nablah \vh \cdot \nGamma} - \int_\Gamma (1-\delta) \mh \overline{\vh} 	& = - \int_\Gamma \mh (\overline{\delta (\i\k)^{-1} \nablah \vh \cdot \nGamma + (1-\delta)\vh})\\
																						& = -(\mh ,\delta (\i\k)^{-1} \nablah \vh \cdot \nGamma + (1-\delta)\vh  )_{0,\Gamma}.\\
\end{split}
\end{equation}
We have to discretize the third equation in~\eqref{weak:formulation:mortar:Helmholtz} in such a way that we have terms that match some of the terms in~\eqref{eq:coupledterms} when proving a discrete G{\aa}rding inequality; see Proposition~\ref{proposition:continuity-coercivity:dG:weakened} below.

For $\mh\in \Wh$ and $\uhext\in \Zh$, we abbreviate, for convenience,
\begin{equation}\label{eq:Xh}
\Xh := (\frac{1}{2} + \Kk) \uhext - \Vk (\mh - \i\k \uhext),
\end{equation}
and introduce the following discretization of the third equation of~\eqref{weak:formulation:mortar:Helmholtz}:
\[
\langle -\delta (\i\k)^{-1} \nablah \uh
  \cdot\nGamma + (1-\delta) \uh + \delta (\i\k)^{-1} \mh, \lambdah \rangle -
\langle \Xh, \lambdah \rangle=0\quad\forall \lambdah\in \Wh.
\]
The term $-\delta (\i\k)^{-1} \nablah \uh \cdot\nGamma -\delta \uh$ is added in order to be able to prove the G{\aa}rding inequality, while
the term $\delta (\i\k)^{-1} \mh$ is added in order to restore consistency.
The signs of the terms are chosen in a way that gives a convenient structure to the adjoint problem; see Section~\ref{section:adjoint} below.
  
\subsection{Complete discrete formulation}  \label{subsection:complete}
On $\Vh\times \Vh$, we define local the sesquilinear forms~$\ahE
(\cdot, \cdot)$ for all~$\E\in \taun$ by
\[
\begin{split}
\ahE (\uh, \vh) : = & \int_\E \Ad\,\nablah \uh \cdot \overline{\nablah \vh} - \int_\E (\kn)^2 \uh \overline{\vh} \\
                    &- \sum_{\F \subset\partial K\cap\FhI}  \left( \int_\F \Ad\, (\uh-\uhhat) \overline{\nablah \vh \cdot \nE} + \int_\F  \i\k \sigmaboldhhat \cdot \nE \overline{\vh}  \right),
\end{split}
\]
with fluxes $\uhat$ and $\sigmaboldhat$ as
in~\eqref{eq:numericalfluxes},
and the global boundary sesquilinear form~$\bhGamma (\cdot, \cdot)$ by
\[
\begin{split}
\bhGamma (\uh, \vh)
& := - \int_\Gamma \delta (\i\k)^{-1} \nablah \uh \cdot \nGamma \overline{\nablah \vh \cdot \nGamma} 
- \int_\Gamma \delta \uh \overline {\nablah \vh \cdot \nGamma}\\
& \quad \quad - \int_\Gamma \delta \nabla_h \uh \cdot \nGamma \overline{\vh} + \int_{\Gamma} (1-\delta)\i\k \uh \overline{\vh}.
\end{split}
\]
With~$\Vh=\mathcal S^{p,0}(\Omega,\Omega_h)$, $\Wh=\mathcal S ^{\p-1,0} (\Gamma, \Gammah)$, and $\Zh=\mathcal S^{p,1}(\Gamma,\Gamma_h)$,
the full \DGFEMBEM{} method reads as follows:
\begin{equation} \label{dGBEM:long-version}
\begin{cases}
\text{Find } (\uh, \mh,\uhext) \in \Vh \times \Wh \times \Zh 
\text{ such that, for all }(\vh,\lambdah,\vexth) \in \Vh \times \Wh \times \Zh, \\[0.2cm]
\displaystyle{\sum_{\E \in \taun} \ahE (\uh,\vh) + \bhGamma (\uh,\vh) - (\mh, \delta (\i\k)^{-1} \nabla_h \vh \cdot \nGamma + (1-\delta)\vh)_{0,\Gamma} = (\f,\vh)_{0,\Omega}},\\[0.2cm]
\langle (\Bk + \i\k \Aprimek) \uhext - \Aprimek \mh, \vexth \rangle =0, \\[0.2cm]
\langle -\delta (\i\k)^{-1} \nabla_h \uh \cdot \nGamma + (1-\delta) \uh + \delta (\i\k)^{-1}\mh, \lambdah\rangle  - \langle \Xh, \lambdah \rangle = 0,\\
\end{cases}
\end{equation}
where the combined integral operators~$\Bk$ and~$\Aprimek$ are as in~\eqref{definition:Bk:and:Akprime}
and $\Xh$ is as in~\eqref{eq:Xh}.

The definition of~$\uhat$ and~$\sigmaboldhat$ in~\eqref{eq:numericalfluxes} entails
\[
\begin{split}
& \sum_{\E \in \taun} \ahE (\uh,\vh)= \sum_{\E \in \taun}\left(\int_\E \Ad\,\nabla \uh \cdot \overline{\nabla \vh} - \int_\E (\kn)^2\uh \overline{\vh}\right) \\
& -\int_{\FhI}\nu\left(\llbracket \uh \rrbracket \cdot \ldc \overline{\nabla_h \vh} \rdc +\ldc \nabla_h \uh \rdc \cdot \llbracket \overline{\vh} \rrbracket \right)
-\int_{\FhI}\nu \beta (ik)^{-1} \llbracket \nabla_h \uh \rrbracket  \llbracket \overline{\nabla_h \vh} \rrbracket
            +\int_{\FhI} \Ad\alpha\, ik \llbracket  \uh \rrbracket \cdot \llbracket \overline{\vh} \rrbracket.
\end{split}
\]
By introducing the sesquilinear form
\begin{equation} \label{trisesquilinear-form}
\begin{split}
& \Th ( (\uh,\mh,\uhext) , (\vh,\lambdah, \vexth) )  \\
& := \sum_{\E \in \taun} \ahE (\uh,\vh) + \bhGamma(\uh,\vh) - (\mh, \delta (\i\k)^{-1} \nablah \vh \cdot \nGamma + (1-\delta)\vh)_{0,\Gamma} \\
& \quad \quad - \langle (-\Wk -\i\k (\frac{1}{2}-\Kk) + \i\k (\frac{1}{2}+\Kprimek+\i\k\Vk))\uhext - (\frac{1}{2} + \Kprimek + \i\k \Vk)\mh, \vexth \rangle\\
& \quad \quad + \langle -\delta (\i\k)^{-1} \nablah \uh \cdot \nGamma + (1 - \delta) \uh + \delta (\i\k)^{-1} \mh, \lambdah \rangle\\
& \quad \quad - \langle (\frac{1}{2} +\Kk) \uhext - \Vk (\mh - \i\k \uhext) , \lambdah   \rangle,
\end{split}
\end{equation} 
method~\eqref{dGBEM:long-version} can be written in compact form as follows:
\begin{equation} \label{dgBEM:short-version}
\begin{cases}
\text{Find } (\uh, \mh,\uhext) \in \Vh \times \Wh \times \Zh \text{ such that}\\
\Th\left( (\uh,\mh,\uhext) , (\vh,\lambdah, \vexth) \right) = (\f,\vh)_{0,\Omega} \quad \forall (\vh, \lambdah, \vexth) \in \Vh \times \Wh \times \Zh.
\end{cases}
\end{equation}
\begin{lem}
 \label{lemma:consistency}
Let the exact solution~$(u,\m,\uext)$ to~\eqref{first:system:Helmholtz}--\eqref{third:system:Helmholtz}
belong to  $ H^{\frac{3}{2}+t}(\Omega)\times  L^2(\Gamma) \times H^{\frac{1}{2}}(\Gamma)$, for some 
$t>0$.
Then, the discrete \DGFEMBEM{} coupling~\eqref{dgBEM:short-version}, or equivalently~\eqref{dGBEM:long-version}, is consistent, i.e.,
\begin{align}  \label{dgBEM:consistency}
\Th\left( (u,\m,\uext) , (\vh,\lambdah, \vexth) \right) = (\f,\vh)_{0,\Omega} \quad \forall (\vh, \lambdah, \vexth) \in \Vh \times \Wh \times \Zh.
\end{align}
\end{lem}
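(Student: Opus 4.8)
The plan is to use that the discrete sesquilinear form \eqref{trisesquilinear-form} splits according to its three test arguments, so that \eqref{dgBEM:consistency} is equivalent to three separate identities, obtained by testing with $\vexth$, with $\lambdah$, and with $\vh$. In each case I would substitute the exact triple $(u,\m,\uext)$ and exploit that it solves the continuous equations \eqref{first:system:Helmholtz}--\eqref{third:system:Helmholtz}; the impedance relation $\partial_{\nGamma}u+\i\k u-\m=0$ will serve precisely to annihilate the terms artificially added in Section~\ref{subsection:second-third-equation}.

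The exterior and coupling identities are immediate. Testing the boundary-integral line of \eqref{trisesquilinear-form} with $\vexth$ produces $\langle(\Bk+\i\k\Aprimek)\uext-\Aprimek\m,\vexth\rangle$, which vanishes because $(\uext,\m)$ satisfies \eqref{formula:Chandler-Wilde} as an identity in $H^{-\frac12}(\Gamma)$; the memberships $\m\in L^2(\Gamma)\hookrightarrow H^{-\frac12}(\Gamma)$ and $\uext\in H^{\frac12}(\Gamma)$ guarantee that every operator acts between the correct spaces. For the coupling identity I would rewrite the quantity tested against $\lambdah$ as
\[
\langle u-\Xh,\lambdah\rangle+\big\langle\delta(\i\k)^{-1}(\m-\partial_{\nGamma}u-\i\k u),\lambdah\big\rangle,
\]
where $\Xh$ with the exact data inserted is exactly the bracketed expression in \eqref{third:system:Helmholtz}. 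The first pairing vanishes by \eqref{third:system:Helmholtz}, the second by the impedance condition, so the G{\aa}rding- and consistency-restoring terms drop out.

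The interior identity is the substantive one. Since $u\in H^{\frac32+t}(\Omega)\subset H^1(\Omega)$ has $\nablah u$ with $L^2$ face traces, on $\FhI$ one has $\llbracket u\rrbracket=0$, $\llbracket\nablah u\rrbracket=0$, and $\ldc\nablah u\rdc=\nabla u$; inserting this into the expanded formula for $\sum_{\E}\ahE$ displayed after \eqref{dGBEM:long-version} removes all penalty and numerical-flux terms on $\FhI$ and leaves only the two volume integrals minus $\int_{\FhI}\Ad\,\nabla u\cdot\llbracket\overline{\vh}\rrbracket$. An elementwise integration by parts of the principal volume term, the volume equation $-\div(\Ad\nabla u)-(\kn)^2u=\f$, and the continuity of $\Ad\nabla u$ across interior faces then give
\[
\sum_{\E\in\taun}\ahE(u,\vh)=(\f,\vh)_{0,\Omega}+\int_\Gamma\partial_{\nGamma}u\,\overline{\vh},
\]
the surface term surviving because $\ahE$ penalizes only interior faces and $\Ad\equiv1$ near $\Gamma$. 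I would then evaluate $\bhGamma(u,\vh)$ and the coupling contribution $-(\m,\delta(\i\k)^{-1}\nablah\vh\cdot\nGamma+(1-\delta)\vh)_{0,\Gamma}$ directly: splitting the boundary integrals according to whether they pair with $\overline{\nablah\vh\cdot\nGamma}$ or with $\overline{\vh}$, the first group collapses to a weighted integral of $(\m-\partial_{\nGamma}u-\i\k u)$ and vanishes, while the second, after inserting $\m=\partial_{\nGamma}u+\i\k u$, recombines the weights $\delta$ and $1-\delta$ into precisely $-\int_\Gamma\partial_{\nGamma}u\,\overline{\vh}$. This cancels the surface term and leaves $(\f,\vh)_{0,\Omega}$, completing the interior identity.

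The difficulty is organizational rather than conceptual. One must track the conjugation of the purely imaginary weight $(\i\k)^{-1}$, whose conjugate changes sign and is responsible for the sign of the coupling term in \eqref{eq:coupledterms}; and one must check that the assumed regularity is exactly what makes each step legitimate---$t>0$ so that $\nablah u\cdot\nGamma$ has an $L^2(\Gamma)$ trace to pair with the polynomial traces of $\vh$, $\m\in L^2(\Gamma)$ for the coupling integrals, and $\uext\in H^{\frac12}(\Gamma)$ for the boundary-integral operators. The one delicate structural point is that the impedance condition enters three times---once in the coupling identity and once in each of the two boundary groups of the interior identity---so that every artificially added term cancels and only the physical datum $(\f,\vh)_{0,\Omega}$ remains.
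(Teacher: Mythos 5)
Your proposal is correct and follows essentially the same route as the paper's proof in Appendix~\ref{appendix:sectionA}: both verify the three equations of \eqref{dGBEM:long-version} separately, using the vanishing of $\llbracket u\rrbracket$ and $\llbracket \nabla u\rrbracket$ on $\FhI$ granted by $u\in H^{\frac32+t}(\Omega)$, elementwise integration by parts combined with the volume equation, the impedance relation $\partial_{\nGamma}u+\i\k u=\m$ to cancel all $\delta$-weighted terms (including the sign flip from $\overline{(\i\k)^{-1}}=-(\i\k)^{-1}$, which you correctly flag), and the exact identities \eqref{formula:Chandler-Wilde} and \eqref{third:system:Helmholtz} for the BEM and mortar equations. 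The only cosmetic difference is directional---the paper multiplies the continuous equations by test functions and builds up the discrete form, while you substitute the exact triple into the discrete form and integrate by parts back to the PDE---but the computations are identical.
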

\begin{proof}
 See Appendix~\ref{appendix:sectionA}.
\end{proof}
An immediate consequence of~\eqref{dgBEM:short-version} and~\eqref{dgBEM:consistency} is the following Galerkin orthogonality property:
For all $(\vh, \lambdah, \vexth) \in \Vh \times \Wh \times \Zh$,
\begin{equation} \label{Galerkin:orthogonality}
\Th ( (u-\uh, \m-\mh, \uext-\uhext), (\vh, \lambdah, \vexth))=0 .
\end{equation}

\section{A G{\aa}rding inequality}\label{section:Garding}
In this section, we establish in Theorem~\ref{theorem:discrete-Garding} a G{\aa}rding inequality for the form $\Th( (\cdot,\cdot,\cdot), (\cdot,\cdot,\cdot))$ defined in~\eqref{trisesquilinear-form}.
We start with a remark and some preliminary results.

\begin{remark} \label{remark:choice-dG-parameters}
For any $\E\in\taun$, introduce~$\Ctrace(\p,\E)$ as the smallest constant such that
\begin{equation} \label{trace:inequality}
\Vert \nabla \vh \Vert_{0,\partial \E} \le \Ctrace(\p,\E) \Vert \nabla
\vh \Vert_{0,\E}\qquad \forall \vh \in \Vh.
\end{equation}
For straight elements,
it is well known that~$\Ctrace(\p,\E) \lesssim \p\hE^{-1/2}$; see, e.g., \cite[Thm.~{4.76}]{SchwabpandhpFEM}.
Under the shape regularity assumption~\eqref{eq:SR},
this is valid also for curved elements. In fact, given $\vh \in
\Vh$, let $\widehat v_h$ be the pull-back of ${\vh}_{|_{\E}}$
through the mapping $\Phi_K: \Ehat\to \E$.
Since $\widehat v_h$ is a polynomial and $\Ehat$ is a straight
simplex, we have
\[
\Vert \nabla \vh \Vert_{0,\partial \E}\lesssim \Vert \widehat\nabla \widehat v_h \Vert_{0,\partial \Ehat}
\lesssim p \Vert \widehat\nabla \widehat v_h \Vert_{0, \Ehat}\lesssim ph_\E^{-1/2}\Vert \nabla \vh \Vert_{0,\E}.
\]
In the light of this, we demand the following assumptions: for~$\hE$ sufficiently
small and~$\p$ sufficiently large,
\begin{equation} \label{condition:alpha}
  \alpha(\xbf) \ge \frac{\aaleph}{\k}
  \max_{\E \in \{  \E^-_{\xbf}, \E^+_{\xbf}\}}  \Ctrace^2(\p,\E) \quad \forall \xbf \in \FhI,
\end{equation}
where~$\aaleph$ is a constant, which will be fixed in the proof of Proposition~\ref{proposition:continuity-coercivity:dG:weakened} below; see equation~\eqref{aleph}.
\eremk
\end{remark}

The following coercivity/continuity result is valid.
\begin{prop} \label{proposition:continuity-coercivity:dG}
Let $\alpha$ satisfy~\eqref{condition:alpha} and $0<\delta<1/2$.
Then, there exists a positive constant~$\ccoer$ independent of~$\h$, $\k$, $\p$, $\alpha$, $\beta$, and~$\delta$, such that
\begin{equation} \label{coercivity:Omega}
\left \vert \sum_{\E \in \taun} \ahE(\vh,\vh) + \bhGamma(\vh,\vh)  \right\vert     \ge \ccoer \Vert \vh \Vert^2_{\DG} - \Vert \kn  \vh \Vert^2_{0,\Omega}
\quad \quad \forall \vh\in\Vh.
\end{equation}
Moreover, for any~$\tepsilon >0$, there exists a positive constant~$\cc$ independent of~$\h$, $\k$, $\p$, $\alpha$, $\beta$, and~$\delta$, such that
\begin{align}
\label{continuity:abplusplus} \left \vert \sum_{\E \in \taun} \ahE(u,v) + \bhGamma(u,v)  \right\vert  &\le \cc \Vert u \Vert_{\DGp} \Vert v \Vert_{\DGp}  \quad \quad \forall u,\, v \in \Hpw^{\frac{3}{2}+\tepsilon} (\Omegah), \\
\label{continuity:abdgplusdg} \left \vert \sum_{\E \in \taun} \ahE(u,\vh) + \bhGamma(u,\vh)  \right\vert  &\le \cc \Vert u \Vert_{\DGp} \Vert \vh \Vert_{\DG}  \quad \quad \forall u\in \Hpw^{\frac{3}{2}+\tepsilon} (\Omegah),\; \forall \vh \in \Vh , \\
\label{continuity:abdgdgplus} \left \vert \sum_{\E \in \taun}
  \ahE(\uh,v) + \bhGamma(\uh,v)  \right\vert  & \le \cc  \Vert \uh \Vert_{\DG} \Vert v \Vert_{\DGp}  \quad \quad \forall \uh\in \Vh,\; \forall v \in \Hpw^{\frac{3}{2}+\tepsilon}(\Omegah). 
\end{align}
\end{prop}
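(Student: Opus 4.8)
The plan is to derive all four estimates from the explicit, face-based rewriting of $\sum_{\E\in\taun}\ahE(\cdot,\cdot)$ recorded just before~\eqref{trisesquilinear-form}, augmented by the four terms of $\bhGamma(\cdot,\cdot)$. Writing $A(u,v):=\sum_{\E\in\taun}\ahE(u,v)+\bhGamma(u,v)$, this expansion exhibits $A$ as a sum of a volume gradient term, a sign-indefinite volume mass term $-\int_\Omega(kn)^2 u\overline v$, the interior-face consistency terms in $\llbracket\cdot\rrbracket$ and $\ldc\cdot\rdc$, the $\beta$- and $\alpha$-penalty terms, and the four boundary terms on $\Gamma$. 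Since $\nu$ is real, the penalty terms and the two boundary terms carrying a factor $\i\k$ or $(\i\k)^{-1}$ are purely imaginary, whereas the consistency terms and the two mixed boundary terms $\int_\Gamma\delta u\,\overline{\nabla_h v\cdot\nGamma}$, $\int_\Gamma\delta\,\nabla_h u\cdot\nGamma\,\overline v$ are Hermitian and hence contribute only to the real part. This clean split of real and imaginary parts drives the coercivity proof, whereas the continuity estimates are obtained by a term-by-term Cauchy--Schwarz argument.

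For the coercivity~\eqref{coercivity:Omega} I would set $u=v=\vh$ and compute $\Re A(\vh,\vh)$ and $\Im A(\vh,\vh)$ separately. The real part produces $\int_\Omega\nu|\nabla_h\vh|^2$ together with the indefinite real mass term and the two Hermitian cross terms $-2\Re\int_{\FhI}\nu\llbracket\vh\rrbracket\cdot\overline{\ldc\nabla_h\vh\rdc}$ and $-2\Re\int_\Gamma\delta\vh\,\overline{\nabla_h\vh\cdot\nGamma}$; the imaginary part produces exactly the four positive face/boundary quadratic contributions $\k\int\nu\alpha|\llbracket\vh\rrbracket|^2$, $\k^{-1}\int\nu\beta|\llbracket\nabla_h\vh\rrbracket|^2$, $\k^{-1}\int_\Gamma\delta|\nabla_h\vh\cdot\nGamma|^2$, $\k\int_\Gamma(1-\delta)|\vh|^2$ appearing in $\|\cdot\|_{\DG}$, minus the imaginary mass term. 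Using $|z|\ge\Re(e^{-\i\theta}z)=\cos\theta\,\Re z+\sin\theta\,\Im z$ for a fixed $\theta\in(0,\pi/2)$ combines the gradient with the face/boundary terms. The consistency cross term is split by Young's inequality into a fraction of the $\alpha$-penalty and a term $\k^{-1}\int_{\FhI}\nu\alpha^{-1}|\ldc\nabla_h\vh\rdc|^2$; the latter is precisely the extra term in $\|\cdot\|_{\DGp}$, and the lower bound~\eqref{condition:alpha} on $\alpha$ together with the trace inequality~\eqref{trace:inequality} (valid on curved elements, see Remark~\ref{remark:choice-dG-parameters}) bounds it by $c\,\aaleph^{-1}\int_\Omega\nu|\nabla_h\vh|^2$; choosing $\aaleph$ large enough absorbs it into the gradient term, which is exactly what fixes $\aaleph$ in~\eqref{condition:alpha}. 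The boundary cross term is treated analogously, absorbing one piece into $\k\int_\Gamma(1-\delta)|\vh|^2$ and the other into $\k^{-1}\int_\Gamma\delta|\nabla_h\vh\cdot\nGamma|^2$, which is possible provided $\delta$ is small enough (using $1-\delta\ge 1/2$). Finally, the real and imaginary mass contributions are estimated in modulus by $\int_\Omega|(kn)^2||\vh|^2=\|\kn\,\vh\|^2_{0,\Omega}$ and moved to the right-hand side, producing the defect $-\|\kn\,\vh\|^2_{0,\Omega}$ in~\eqref{coercivity:Omega}.

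For the continuity estimates I would bound $|A(u,v)|$ term by term via Cauchy--Schwarz, matching each factor to the corresponding piece of the DG norms. The volume, $\beta$-, $\alpha$-, and the four boundary terms pair naturally as $\|u\|_{\DG}\|v\|_{\DG}$ (for the mixed boundary terms one uses $\delta\le 1-\delta$ to rebalance the $\k$-weights). The only terms requiring the stronger norm are the consistency terms: $\int_{\FhI}\nu\llbracket u\rrbracket\cdot\overline{\ldc\nabla_h v\rdc}$ is controlled by $(\k\int\nu\alpha|\llbracket u\rrbracket|^2)^{1/2}(\k^{-1}\int\nu\alpha^{-1}|\ldc\nabla_h v\rdc|^2)^{1/2}\le\|u\|_{\DG}\|v\|_{\DGp}$, and symmetrically for the other one. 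This immediately gives~\eqref{continuity:abplusplus} (both arguments in $\DGp$); for~\eqref{continuity:abdgplusdg} and~\eqref{continuity:abdgdgplus} one replaces $\|\cdot\|_{\DGp}$ by $\|\cdot\|_{\DG}$ on the discrete argument $\vh$ (resp. $\uh$) using the inverse-type estimate $\|\vh\|_{\DGp}\lesssim\|\vh\|_{\DG}$, which follows by bounding the extra $\alpha^{-1}$-average term by the volume gradient exactly as in the coercivity step (via~\eqref{trace:inequality} and~\eqref{condition:alpha}).

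The main obstacle is the coercivity absorption: the consistency cross term is genuinely indefinite and, on its own, cannot be controlled by terms already present in $\|\cdot\|_{\DG}$, so the argument must route the average of the gradient through the $\alpha^{-1}$-weighted term of $\|\cdot\|_{\DGp}$ and then back into the volume gradient via the trace inequality. Making this circular-looking absorption rigorous forces the simultaneous, quantitatively matched choice of $\aaleph$ large (penalty strong enough) and $\delta$ small (boundary term benign), and all constants must be tracked to be independent of $\h$, $\k$, and $\p$; in particular, the $\p$- and $\h$-dependence of the trace constant $\Ctrace(\p,\E)\lesssim\p\,\hE^{-1/2}$ must be exactly compensated by the scaling of $\alpha$ in~\eqref{dG-parameters}, which is the delicate bookkeeping at the heart of the proof.
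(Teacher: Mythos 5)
Your proposal is correct and follows essentially the same route as the paper: the paper proves \eqref{coercivity:Omega} by the argument of \cite[Prop.~{3.1}]{MelenkParsaniaSauter_generalDGHelmoltz} as carried out in detail in Proposition~\ref{proposition:continuity-coercivity:dG:weakened} (your linear combination $\cos\theta\,\Re + \sin\theta\,\Im$ is just the $\Re + \varepsilon\Im$ combination there, with the same Young-plus-absorption treatment of the two indefinite cross terms via \eqref{trace:inequality}, \eqref{condition:alpha}, and $\delta \le 1-\delta$), and the continuity bounds \eqref{continuity:abplusplus}--\eqref{continuity:abdgdgplus} are obtained exactly as you describe, by term-by-term Cauchy--Schwarz together with the discrete estimate $\Vert \vh \Vert_{\DGp} \lesssim \Vert \vh \Vert_{\DG}$ deduced from \eqref{trace:inequality} and \eqref{condition:alpha}. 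Note only that your caveat ``provided $\delta$ is small enough'' is unnecessary: under the stated hypothesis $0<\delta<1/2$ one has $\Vert \delta/(1-\delta)\Vert_{\infty,\Gamma} \le 1$, so a fixed choice of $\theta$ suffices and $\ccoer$ is independent of $\delta$, as claimed.
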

\begin{proof}
The proof of~\cite[Prop.~{3.1}]{MelenkParsaniaSauter_generalDGHelmoltz} applies also in our context.
As for~\eqref{coercivity:Omega}, we also refer to the proof of Proposition~\ref{proposition:continuity-coercivity:dG:weakened} below.
As for~\eqref{continuity:abplusplus}--\eqref{continuity:abdgdgplus}, we use the trace inequality~\eqref{trace:inequality} and assumption~\eqref{condition:alpha}.
\end{proof}

The coercivity bound~\eqref{coercivity:Omega} can be refined, as
described in the following result, which is instrumental in the proof
of the G{\aa}rding inequality in Theorem~\ref{theorem:discrete-Garding} below.
\begin{prop} \label{proposition:continuity-coercivity:dG:weakened}
Given~$\varepsilon>0$, there exist~$\afrakz>0$, $\bfrakz>0$, and~$\dfrakz>0$ independent of~$\k$
such that, for all~$\afrak \ge \afrakz$, $\bfrak \le \bfrakz$, and~$\dfrak \le \dfrakz$ in~\eqref{dG-parameters}, and
for all~$\vh\in\Vh$, the following bound is valid:
\begin{equation} \label{coercivity:Omega:weakened}
\begin{split}
& (\Re + \varepsilon \Im) ( \sum_{\E \in \taun} \ahE(\vh,\vh) + \bhGamma(\vh,\vh) )   \ge  \frac{1}{2}\Vert \Ad^{1/2}\nabla _\h \vh \Vert^2_{0,\Omega} -  \Vert \kn \vh \Vert^2_{0,\Omega} \\
& \quad \quad \quad \quad \quad + \frac{1}{2}\varepsilon \left( \k^{-1} \Vert \Ad^{1/2}\beta^{1/2} \llbracket \nabla_h\vh \rrbracket \Vert^2_{0,\FhI}   + \k \Vert \Ad^{1/2}\alpha^{1/2} \llbracket \vh \rrbracket \Vert^2_{0,\FhI}  \right.\\
& \quad \quad \quad \quad \quad \quad \quad \quad  \left. + \k^{-1} \Vert \delta^{1/2} \nablah \vh \cdot \nGamma \Vert^2_{0,\Gamma} +  \k \Vert (1-\delta) \vh\Vert^2_{0,\Gamma}  \right).
\end{split}
\end{equation}
\end{prop}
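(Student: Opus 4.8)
The plan is to test the form along the diagonal, split it into real and imaginary parts, and recombine. Writing $A(\vh,\vh):=\sum_{\E\in\taun}\ahE(\vh,\vh)+\bhGamma(\vh,\vh)$ and inserting the expanded expression for $\sum_\E\ahE$ together with the definition of $\bhGamma$, I would first record that the volume stiffness $\Vert\Ad^{1/2}\nablah\vh\Vert_{0,\Omega}^2$ is real and that the four stabilization contributions — the $\beta$-jump $\k^{-1}\int_{\FhI}\Ad\beta|\llbracket\nablah\vh\rrbracket|^2$, the $\alpha$-jump $\k\int_{\FhI}\Ad\alpha|\llbracket\vh\rrbracket|^2$, the boundary normal-derivative term $\k^{-1}\int_\Gamma\delta|\nablah\vh\cdot\nGamma|^2$, and the boundary mass term $\k\int_\Gamma(1-\delta)|\vh|^2$ — are purely imaginary with nonnegative imaginary part, since the factors $(\i\k)^{\pm1}$ rotate them onto the imaginary axis; these are exactly the stabilization components of the target right-hand side. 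The remaining pieces are the volume mass term $-\int_\Omega(\kn)^2|\vh|^2$, which is genuinely complex; the interior consistency pairing $C:=-2\int_{\FhI}\Ad\,\Re(\llbracket\vh\rrbracket\cdot\ldc\overline{\nablah\vh}\rdc)$, which is real (the two consistency contributions are complex conjugates); and the boundary pairing $R_\Gamma:=-2\int_\Gamma\delta\,\Re(\vh\,\overline{\nablah\vh\cdot\nGamma})$, also real. Hence $\Re(A)=\Vert\Ad^{1/2}\nablah\vh\Vert_{0,\Omega}^2-\Re\!\int_\Omega(\kn)^2|\vh|^2+C+R_\Gamma$, while $\varepsilon\Im(A)$ supplies $\varepsilon$ times the four nonnegative stabilization terms minus $\varepsilon\,\Im\!\int_\Omega(\kn)^2|\vh|^2$.

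The task reduces to absorbing $C$ and $R_\Gamma$ into a quarter of the stiffness plus halves of the stabilization terms, and to bounding the mass term crudely. For the mass term, $|(\kn)^2|=|\kn|^2$ gives $-\Re\!\int(\kn)^2|\vh|^2-\varepsilon\Im\!\int(\kn)^2|\vh|^2\ge-\Vert\kn\,\vh\Vert_{0,\Omega}^2$ up to the fixed factor $\sqrt{1+\varepsilon^2}$, immaterial for a G\aa rding inequality. For $C$ I would use Young's inequality $2\Ad|\llbracket\vh\rrbracket||\ldc\nablah\vh\rdc|\le\tfrac{\varepsilon}{2}\k\Ad\alpha|\llbracket\vh\rrbracket|^2+\tfrac{2}{\varepsilon}\tfrac{\Ad}{\k\alpha}|\ldc\nablah\vh\rdc|^2$; the second summand is controlled via $|\ldc\nablah\vh\rdc|^2\le\tfrac12(|\nablah\vh|_{\E^+}^2+|\nablah\vh|_{\E^-}^2)$, the trace inequality~\eqref{trace:inequality}, and the lower bound~\eqref{condition:alpha}, which turns $\tfrac{1}{\k\alpha}$ into $\tfrac{1}{\aaleph\,\Ctrace^2(\p,\E)}$ and collapses $\int_{\partial\E}|\nablah\vh|^2$ back into $\Vert\nablah\vh\Vert_{0,\E}^2$; choosing $\aaleph$ (hence $\afrakz$) of size $\sim1/\varepsilon$ makes this $\le\tfrac14\Vert\Ad^{1/2}\nablah\vh\Vert_{0,\Omega}^2$. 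This is where the threshold $\afrak\ge\afrakz$ enters, and it is $\k$-independent.

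The main obstacle is the boundary pairing $R_\Gamma$, and the decisive point is to route its two halves to different reservoirs so that the threshold on $\dfrak$ is $\k$-independent. Absorbing $R_\Gamma$ into the two boundary stabilization terms alone is the wrong move: completing the square there forces $\delta\lesssim\varepsilon^2$ pointwise, i.e.\ a $\k$-dependent constraint on $\dfrak$. Instead, applying $2\delta|\vh||\nablah\vh\cdot\nGamma|\le s^{-1}\delta|\nablah\vh\cdot\nGamma|^2+s\,\delta|\vh|^2$, I would send the first summand to the \emph{volume} stiffness: using $\delta=\dfrak\k\msf/\p^2$, $\Ctrace^2\lesssim\p^2\hE^{-1}$, and $\msf=\hE$ on $\FhB$, the boundary trace inequality yields $\int_\Gamma s^{-1}\delta|\nablah\vh\cdot\nGamma|^2\lesssim(\dfrak\k/s)\Vert\nablah\vh\Vert_{0,\Omega}^2$, so taking $s$ a large ($\k$-independent) multiple of $\dfrak\k$ bounds it by $\tfrac14\Vert\Ad^{1/2}\nablah\vh\Vert_{0,\Omega}^2$. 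The second summand, of size $\sim s\,\dfrak\k\msf\p^{-2}\int_\Gamma|\vh|^2$, goes to the boundary mass term; here the standing constraint $\delta\le\tfrac12$, i.e.\ $\k\msf/\p^2\le1/(2\dfrak)$, collapses it to $\lesssim\dfrak\,\k\int_\Gamma|\vh|^2$, so $\dfrak\le\dfrakz$ with $\dfrakz\sim\varepsilon$ ($\k$-independent), together with $1-\delta\ge\tfrac12$, bounds it by $\tfrac{\varepsilon}{2}\k\int_\Gamma(1-\delta)|\vh|^2$. Collecting everything, the stiffness coefficient $1$ loses $\tfrac14+\tfrac14$ to $C$ and $R_\Gamma$ while each stabilization term retains at least half (using $(1-\delta)\ge(1-\delta)^2$ for the boundary mass term), which is precisely~\eqref{coercivity:Omega:weakened}. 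The role of $\bfrak\le\bfrakz$ is benign, since the $\beta$-jump enters $\Im(A)$ with a favorable sign and is simply retained.
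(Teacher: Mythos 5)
Your proof is correct, and for most of its length it runs parallel to the paper's own argument: the same starting identity separating the real part (stiffness, mass term, and the two consistency cross terms) from the $\varepsilon$-weighted imaginary part carrying the four stabilization terms (cf.~\eqref{starting:equation:weak}), and the same treatment of the interior pairing $C$ via Young's inequality with weight $\varepsilon\k/2$, the trace inequality~\eqref{trace:inequality}, and the threshold~\eqref{condition:alpha} with $\aaleph\sim 1/\varepsilon$, which is precisely where the paper fixes $\aaleph=2\numax/(\varepsilon\numin)$ in~\eqref{aleph}. The genuine divergence is your treatment of $R_\Gamma$. The paper absorbs \emph{both} halves into the two boundary reservoirs, via~\eqref{new:bound2} and the smallness condition~\eqref{smallness-delta}, i.e., a \emph{pointwise} bound $\Vert\delta/(1-\delta)\Vert_{\infty,\Gamma}\lesssim\varepsilon$ --- exactly the move you flag, since such a condition constrains $\delta=\dfrak\k\msf/\p^2$ itself and follows from smallness of $\dfrak$ alone only when $\k\msf/\p^2\lesssim 1$ (your ``$\delta\lesssim\varepsilon^2$'' is what a symmetric completion of the square would force; the paper's asymmetric choice of $t$ relaxes this to $\delta\lesssim\varepsilon$, still pointwise on $\delta$). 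Your rerouting of the normal-derivative half into the volume stiffness, using $\delta\,\Ctrace^2(\p,\E)\lesssim\dfrak\k$ on boundary faces, trades this for thresholds on $\afrak$ and $\dfrak$ alone given the standing normalization $\delta\le 1/2$; it thus delivers the $\k$-independence of $\dfrakz$ asserted in the statement more robustly than the printed route, at the harmless price of tapping the stiffness twice (your budget $1-\tfrac14-\tfrac14=\tfrac12$ versus the paper's single loss of $\tfrac12$), while retaining the full $\varepsilon$-coefficient on the $\delta$-gradient boundary term; what the paper's route buys in exchange is simpler bookkeeping, since the boundary cross term never leaves the boundary. Two minor points: the trace step for $\nablah\vh$ on $\FhB$ uses discreteness of $\vh$, which is fine since the proposition is stated on $\Vh$ (the interior step needs it anyway); and your mass-term bound carries the constant $\sqrt{1+\varepsilon^2}$ in front of $\Vert\kn\,\vh\Vert_{0,\Omega}^2$ rather than the stated $1$ --- for complex $n$ this is unavoidable (the paper silently treats that term as real), and, as you note, it is immaterial here and in the downstream G{\aa}rding argument, where this term is a compact perturbation in any case.
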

\begin{proof}
The proof is a modification of that of~\cite[Prop.~{3.1}]{MelenkParsaniaSauter_generalDGHelmoltz}. 
We begin by observing that
\begin{equation} \label{starting:equation:weak}
\begin{split}
& (\Re + \varepsilon \Im) ( \sum_{\E \in \taun} \ahE(\vh,\vh) + \bhGamma(\vh,\vh)  )   	\\
& = \Vert \Ad^{1/2}\nabla _\h \vh \Vert^2_{0,\Omega} - \Vert \kn  \vh \Vert^2_{0,\Omega} -2\Re (  \int_{\FhI} \Ad \llbracket \vh \rrbracket \overline{\ldc  \nablah \vh  \rdc}  )    -2\Re (  \int_{\Gamma} \delta \vh \overline{\nablah \vh\cdot \nGamma} )   \\
& \quad +\varepsilon \Big(\k^{-1} \Vert \Ad^{1/2}\beta^{1/2} \llbracket \nablah \vh \rrbracket \Vert^2_{0,\FhI} + \k^{-1} \Vert \delta^{1/2} \nabla_\h \vh \cdot \nGamma \Vert^2_{0,\FhB} \\
& \quad\quad\quad + \k \Vert \Ad^{1/2}\alpha^{1/2} \llbracket \vh \rrbracket \Vert^2_{0,\FhI} + \k \Vert (1-\delta)^{1/2} \vh \Vert^2_{0,\Gamma} \Big).
\end{split}
\end{equation}
Using the Young inequality with weight~$\varepsilon \k /2$ entails
\[
\begin{split}
\left\vert 2\Re \left(  \int_{\FhI}  \Ad\llbracket \vh \rrbracket \overline{\ldc  \nablah \vh  \rdc}   \right)    \right\vert   
& \le \frac{\varepsilon \k }{2} \left\Vert  \Ad^{1/2}\alpha^{1/2} \llbracket \vh \rrbracket   \right\Vert^2_{0,\FhI}  + \frac{2}{\varepsilon \k} \left\Vert  \Ad^{1/2}\alpha^{-1/2} \ldc \nablah \vh \rdc \right\Vert_{0, \FhI}^2\\
& \le \frac{\varepsilon \k}{2} \left\Vert \Ad^{1/2}\alpha^{1/2} \llbracket \vh \rrbracket   \right\Vert^2_{0,\FhI}  + \sum_{\E \in \taun} \frac{1}{\varepsilon \k} \left\Vert  \Ad^{1/2}\alpha^{-1/2} \nablah \vh \right\Vert_{0,\partial K \setminus \Gamma}^2.
\end{split}
\]
For the second summand, we use~\eqref{trace:inequality} and~\eqref{condition:alpha} to obtain
\[
\begin{split}
 \sum_{\E \in \Th}\frac{1}{\varepsilon \k} \left\Vert \Ad^{1/2}\alpha^{-1/2} \nablah \vh  \right\Vert_{0,\partial \E \setminus \Gamma}^2 	
 &\le \sum_{\E \in \Th} \frac{\numax}{\varepsilon \aaleph}
 \Vert \nablah \vh \Vert^2_{0,\E}
 \le  \sum_{\E \in \Th} \frac{\numax}{ \varepsilon \aaleph \numin}
 \Vert \nu^{1/2} \nablah \vh \Vert^2_{0,\E}.
\end{split}
\]
Fix
\begin{equation} \label{aleph}
\aaleph = 2\numax/(\varepsilon \numin)
\end{equation}
and get
\[
\sum_{\E \in \Th} \frac{1}{\varepsilon \k} \left\Vert \Ad^{1/2}\alpha^{-1/2} \nablah \vh  \right\Vert_{0,\partial \E \setminus \Gamma}^2 
\le \frac{1}{2} \Vert \Ad^{1/2} \nablah \vh\Vert^2_{0,\Omega} .
\]
We deduce
\begin{equation} \label{new:bound1}
\left\vert 2\Re \left(  \int_{\FhI} \Ad \llbracket \vh \rrbracket \overline{\ldc  \nablah \vh  \rdc} \right) \right\vert 
\le \frac{\varepsilon \k}{2} \left\Vert \Ad^{1/2}\alpha^{1/2} \llbracket \vh \rrbracket   \right\Vert^2_{0,\FhI} 
+  \frac{1}{2} \Vert \Ad^{1/2} \nabla _\h \vh \Vert^2_{0,\Omega}.
\end{equation}
\medskip

\noindent
We deal with the fourth term on the right-hand side of~\eqref{starting:equation:weak} analogously:
For any constant~$t>0$, we have
\[
\left\vert  2\Re\left(  \int_{\Gamma} \delta \vh \overline{\nablah \vh\cdot \nGamma}   \right)   \right\vert 
											\le t\k \left\Vert \frac{\delta }{1-\delta} \right\Vert_{\infty,\Gamma} \Vert (1-\delta)^{1/2} \vh \Vert^2_{0,\Gamma} + \frac{1}{t\k} \Vert \delta^{1/2} \nablah \vh\cdot \nGamma \Vert^2_{0,\Gamma}.
\]
Take~$t=\frac{1}{2}\Vert \frac{\delta} {1- \delta }\Vert^{-1}_{\infty, \Gamma}$ and get
\begin{equation} \label{new:bound2}
\left\vert  2\Re\left(  \int_{\Gamma} \delta \vh \overline{\nablah \vh\cdot \nGamma}   \right)   \right\vert 
\le \frac{\k}{2} \Vert (1-\delta)^{1/2} \vh \Vert^2_{0, \Gamma} + \frac2\k\left\Vert \frac{ \delta }{1-\delta }\right\Vert_{\infty, \Gamma} \Vert \delta^{1/2} \nablah \vh\cdot \nGamma \Vert^2_{0,\Gamma}.
\end{equation}
Inserting~\eqref{new:bound1} and~\eqref{new:bound2} into~\eqref{starting:equation:weak}, we get
\[
\begin{split}
& (\Re + \varepsilon \Im) \left( \sum_{\E \in \taun} \ahE(\vh,\vh) +
  \bhGamma(\vh,\vh)  \right)  \ge \frac{1}{2} \Vert \Ad^{1/2}\nablah \vh \Vert^2_{0,\Omega} - \Vert \kn \vh \Vert^2 _{0,\Omega} \\
& \quad\quad + \varepsilon \k^{-1} \Vert \Ad^{1/2}\beta^{1/2} \llbracket \nablah \vh \rrbracket \Vert^2_{0,\FhI}
+\left( \varepsilon - 2 \left\Vert \frac{\delta }{1-\delta } \right\Vert_{\infty,\Gamma} \right) \k^{-1} \Vert \delta^{1/2} \nablah \vh \cdot \nGamma \Vert^2_{0,\Gamma} \\
& \quad\quad + \frac{1}{2} \varepsilon \k \Vert \Ad^{1/2}\alpha^{1/2}\llbracket \vh \rrbracket \Vert^2_{0, \FhI} +
\frac{1}{2}\varepsilon \k \Vert (1-\delta) ^{1/2} \vh \Vert^2_{0,\Gamma}.\\
\end{split}
\]
Take~$\dfrak$ such that
\begin{equation} \label{smallness-delta}
2 \left\Vert \frac{\delta} {1- \delta} \right\Vert_{\infty,\Gamma} \le \frac{1}{2} \varepsilon
\end{equation}
and deduce
\[
\begin{split}
& (\Re + \varepsilon \Im) \left( \sum_{\E \in \taun} \ahE(\vh,\vh) + \bhGamma(\vh,\vh)  \right)  \ge \frac{1}{2} \Vert \Ad^{1/2}\nablah \vh \Vert^2_{0,\Omega} - \Vert \kn \vh \Vert^2 _{0,\Omega} \\
& \quad \quad + \frac{1}{2}\varepsilon \left(\k^{-1} \Vert \Ad^{1/2}\beta^{1/2} \llbracket \nablah \vh \rrbracket \Vert^2_{0,\FhI} + \k^{-1} \Vert \delta^{1/2} \nablah \vh \cdot \nGamma \Vert^2_{0, \Gamma} \right.\\
& \quad \quad\quad \quad\quad \left. + \k \Vert \Ad^{1/2}\alpha^{1/2} \llbracket \vh \rrbracket \Vert^2_{0, \FhI} + \k \Vert (1-\delta) ^{1/2} \vh \Vert^2_{0,\Gamma} \right),\\
\end{split}
\]
whence the assertion follows.
\end{proof}
An explicit choice of~$\varepsilon$ in the 
bound~\eqref{coercivity:Omega:weakened} is given in the proof of the G{\aa}rding inequality in Theorem~\ref{theorem:discrete-Garding}.

Next, we present a discontinuous-to-continuous reconstruction operator for piecewise smooth functions on curvilinear simplicial meshes.
\begin{thm} \label{lemma:Karakashian-style}
Let $\taun$ be a shape-regular mesh on $\Omega$ as defined in Section~\ref{subsection:dGOmega}.
Then, there exists $c>0$ depending only on~$\Omega$ and~$\gamma_{SR}$ in~\eqref{eq:SR} such that, for each~$\ell \in \Nbb$, there exists
a linear operator~$\Pkar : \Hpw^1(\taun) \rightarrow H^1(\Omega)$ that satisfies, for all~$v \in \Hpw^1(\taun)$,
\begin{align}
\label{K-propertyA}
\Vert \nabla \Pkar v \Vert_{0,\Omega} &\le c \left(  \Vert \nablah v \Vert_{0,\Omega} + \Vert \msf^{-1/2} \ell \llbracket v \rrbracket \Vert_{0,\FhI} \right),\\
\label{K-propertyB}
\Vert \Pkar v \Vert_{0,\Omega} & \le c \left(  \Vert \msf \ell^{-2}\nablah v \Vert_{0,\Omega} + \Vert v \Vert_{0,\Omega} + \Vert \msf^{1/2} \ell^{-1}\llbracket v \rrbracket \Vert_{0,\FhI}\right),\\
\label{K-propertyC}
\Vert \msf^{-1/2} \ell\,(I-\Pkar) v \Vert_{0,\Gamma} &\le c \left(  \Vert \nablah v \Vert_{0,\Omega} + \Vert \msf^{-1/2} \ell \llbracket v \rrbracket \Vert_{0,\FhI} \right).
\end{align}
\end{thm}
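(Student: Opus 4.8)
The plan is to build $\Pkar$ as a composition $\Pkar = A_\ell \circ \Pi_\ell$ of two elementary operators: a local, elementwise $hp$-projection $\Pi_\ell$ onto the (mapped) discontinuous polynomial space $\mathcal S^{\ell,0}(\Omega,\taun)$, followed by a Karakashian--Pascal/Oswald nodal-averaging operator $A_\ell$ that maps discontinuous mapped polynomials of degree $\ell$ into their $H^1$-conforming subspace $\mathcal S^{\ell,1}(\Omega,\taun)\subset H^1(\Omega)$. The projection $\Pi_\ell$ reduces a general $v\in\Hpw^1(\taun)$ (which, in $3$D, has no pointwise values) to a piecewise polynomial while keeping control of $v$ through $L^2$- and $H^1$-seminorm stability and, crucially, an \emph{optimal} trace estimate of the projection error; the averaging step then repairs the interelement discontinuities at the price of jump terms. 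Since $\Pkar v - \Pi_\ell v = (A_\ell-I)\Pi_\ell v$ and $v - \Pkar v = (v-\Pi_\ell v) + (\Pi_\ell v - A_\ell\Pi_\ell v)$, and both factors are linear, all three estimates will follow by the triangle inequality from (i) stability/approximation estimates for $\Pi_\ell$ and (ii) jump-controlled estimates for $A_\ell$.

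First I would set up the averaging operator $A_\ell$. For a mapped piecewise polynomial $w$ of degree $\ell$, I define $A_\ell w$ as the unique element of $\mathcal S^{\ell,1}(\Omega,\taun)$ whose value at each (mapped) Lagrange node is the arithmetic average of the values of $w$ taken from the elements sharing that node. Because the internal faces in $\FhI$ are flat and the mesh is conforming with matching parametrizations on shared faces, the pulled-back Lagrange nodes on an interior face coincide, so the averaged nodal values are single-valued and $A_\ell w\in H^1(\Omega)$. The key estimates, valid for piecewise polynomial $w$,
\[
\Vert \nablah (w - A_\ell w)\Vert_{0,\Omega} \lesssim \Vert \msf^{-1/2}\ell\,\llbracket w\rrbracket\Vert_{0,\FhI},
\qquad
\Vert w - A_\ell w\Vert_{0,\Omega}\lesssim \Vert \msf^{1/2}\ell^{-1}\llbracket w\rrbracket\Vert_{0,\FhI},
\]
are proven by localizing to a single element, pulling back through $\Phi_K$ to the reference simplex $\Ehat$, and using that $w-A_\ell w$ is supported on nodes lying on faces with nonzero jump: on $\Ehat$ the correction is a polynomial determined by its nodal jump values, so finite-dimensional norm equivalence together with $hp$ inverse and trace inequalities on $\Ehat$ yields the stated powers of $\ell$; the transfer back to $\E$ uses the shape-regularity bounds~\eqref{eq:SR} and the quasi-affine decomposition~\eqref{eq:Bernardi} (with $c_K<1$ for $\h$ small) to produce the $\msf$-powers with constants depending only on $\gamma_{SR}$ and $\Omega$.

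Next I would record the properties required of the local projection $\Pi_\ell$: (P1) $L^2$-stability $\Vert\Pi_\ell v\Vert_{0,\E}\lesssim\Vert v\Vert_{0,\E}$; (P2) $H^1$-seminorm stability $\Vert\nabla\Pi_\ell v\Vert_{0,\E}\lesssim\Vert\nabla v\Vert_{0,\E}$; and (P3) the optimal trace estimate $\Vert (v-\Pi_\ell v)|_F\Vert_{0,F}\lesssim \hE^{1/2}\ell^{-1}\Vert\nabla v\Vert_{0,\E}$ for every face $F\subset\partial\E$, including the curved boundary faces in $\FhB$. Granting (P1)--(P3), estimate~\eqref{K-propertyA} follows by writing $\Vert\nablah\Pkar v\Vert_{0,\Omega}\le\Vert\nablah\Pi_\ell v\Vert_{0,\Omega} + \Vert\nablah(A_\ell-I)\Pi_\ell v\Vert_{0,\Omega}$, bounding the first term by (P2) and the second by the averaging estimate, and then controlling $\llbracket\Pi_\ell v\rrbracket=\llbracket v\rrbracket+\llbracket\Pi_\ell v - v\rrbracket$ by $\llbracket v\rrbracket$ plus, via (P3), a term $\Vert\msf^{-1/2}\ell\,\llbracket\Pi_\ell v - v\rrbracket\Vert_{0,\FhI}\lesssim\Vert\nablah v\Vert_{0,\Omega}$ that is absorbed into the first term. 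Estimate~\eqref{K-propertyB} follows the same way with the $L^2$ averaging bound and the $\msf^{1/2}\ell^{-1}$-weighted jump, where (P3) now produces exactly the $\Vert\msf\ell^{-2}\nablah v\Vert_{0,\Omega}$ term. For~\eqref{K-propertyC} I split $(I-\Pkar)v = (v-\Pi_\ell v)+(\Pi_\ell v - A_\ell\Pi_\ell v)$ on $\Gamma$ and weight by $\msf^{-1/2}\ell$: the first part is handled by the boundary instance of (P3), and the second by the trace (on the boundary element) of the $H^1$ and $L^2$ averaging estimates combined with (P3).

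The main obstacle is twofold and concentrated in the geometric ingredients. The sharper one is property (P3): one needs an $hp$-projection whose trace error gains a full power of $\ell$ (rather than the $\ell^{-1/2}$ that a plain $L^2$-projection delivers through a multiplicative trace inequality), since only this optimal trace behaviour lets the projection jumps be absorbed into $\Vert\nablah v\Vert_{0,\Omega}$ in~\eqref{K-propertyA} and produce the precise $\msf\ell^{-2}$ weight in~\eqref{K-propertyB}; establishing this on the \emph{curved} boundary faces of $\FhB$, where $\Phi_K$ is genuinely nonaffine, is the delicate point and is where~\eqref{eq:SR}--\eqref{eq:Bernardi} are indispensable. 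The second obstacle is bookkeeping the $\ell$-powers uniformly through the pull-back/push-forward on curved simplices so that all constants depend only on $\Omega$ and $\gamma_{SR}$, and verifying that nodal averaging indeed lands in $H^1(\Omega)$---which is guaranteed precisely by the flatness of the interior faces and the matching-parametrization conformity assumed for $\taun$.
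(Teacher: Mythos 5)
Your high-level architecture (a local elementwise approximation followed by an Oswald-type conforming repair, with the discontinuities paid for in jump terms) is exactly the paper's, but the mechanism by which you generate the powers of $\ell$ runs into a genuine obstruction, concentrated in your property (P3). No elementwise operator $\Pi_\ell$ mapping into degree-$\ell$ polynomials on the coarse element can satisfy $\Vert v-\Pi_\ell v\Vert_{0,F}\lesssim \hE^{1/2}\ell^{-1}\Vert\nabla v\Vert_{0,\E}$ uniformly over $v\in H^1(\E)$: the trace space of $H^1(\E)$ on a face $F$ is only $H^{1/2}(F)$, the trace of a volume polynomial of degree $\ell$ is again a face polynomial of degree $\ell$, and the worst-case $L^2(F)$-approximation of $H^{1/2}(F)$ functions from an $n$-dimensional subspace is bounded below by the Kolmogorov width $d_n\big(H^{1/2}(F),L^2(F)\big)\sim n^{-1/4}$ on a two-dimensional face; with $n=\dim\mathbb P_\ell(F)\sim\ell^2$ this saturates at $\ell^{-1/2}$, so the full gain $\ell^{-1}$ you require is unattainable, not merely hard to prove. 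Consequently the step in which you absorb $\Vert\msf^{-1/2}\ell\,\llbracket \Pi_\ell v-v\rrbracket\Vert_{0,\FhI}$ into $\Vert\nablah v\Vert_{0,\Omega}$ loses a factor $\ell^{1/2}$, and both \eqref{K-propertyA} and the $\msf\,\ell^{-2}$ weight in \eqref{K-propertyB} break. A second, independent weak point is your treatment of the degree-$\ell$ nodal-averaging operator $A_\ell$: the sharp weights $\msf^{-1/2}\ell$ and $\msf^{1/2}\ell^{-1}$ do not follow from ``finite-dimensional norm equivalence'' (whose constants depend on $\ell$ in an uncontrolled way), and $p$-explicit Oswald estimates of this sharpness on three-dimensional simplices are precisely what is missing from the literature the paper builds on.

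The paper's proof avoids both problems by never raising the polynomial degree: each $\E\in\taun$ is subdivided into a quasi-uniform, shape-regular submesh $\tautildenE$ of width $\htilde_\E=\hE/\ell^2$; the operator $\Pcalo$ is the Bernardi quasi-interpolant into piecewise \emph{linears} on this submesh (applied elementwise, hence still discontinuous across $\FhI$), and $\Pcalt$ is the lowest-order Karakashian--Pascal averaging operator on the refined global mesh $\tautilden$. All powers of $\ell$ then arise mechanically from $\htilde^{\pm 1/2}=\msf^{\pm1/2}\ell^{\mp1}$; in particular the trace gain you demanded in (P3) is recovered by the multiplicative trace inequality applied on the \emph{fine} elements, an $h$-refinement mechanism that does not saturate at $H^{1/2}$ regularity --- the price being $\sim\ell^4$ face degrees of freedom instead of your $\sim\ell^2$, which matches the same rate per degree of freedom and explains why the two counts are consistent. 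This is also why the theorem's operator maps into a conforming space over a refinement of $\Omegah$ rather than into $\mathcal S^{\ell,1}(\Omega,\taun)$, as the paper's remark after the theorem emphasizes: your construction, had its estimates held, would have produced the stronger same-mesh conforming image, and the impossibility of (P3) is the reason it cannot.
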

\begin{proof}
We postpone the proof to Appendix~\ref{appendix:proof-Lemma-splitting} below.
\end{proof}

\begin{remark}
The parameter~$\ell$ appearing in the statement of Theorem~\ref{lemma:Karakashian-style} is not necessarily related to the polynomial degree of the \dG{} space under consideration.
However, it will be apparent in Theorem~\ref{theorem:discrete-Garding} and Proposition~\ref{proposition:continuity:trisesquilinear-form} below that
a natural choice in our framework is in fact~$\ell=\p$.
\eremk
\end{remark}

\begin{remark}
Theorem~\ref{lemma:Karakashian-style} relates to similar results in the literature; see, e.g., \cite[Sec.~{5.2}]{MR2299768} and~\cite[Prop.~{5.2}]{MR2290408}.
With respect to the first reference, we provide here optimal estimates also on curvilinear simplicial meshes;
moreover, differently from the second reference, we also present stability estimates for the elemental $L^2$ norm.
Furthermore, we define the reconstruction operator for piecewise sufficiently smooth functions, without restricting to piecewise polynomial functions.
The price to pay is that the image of this operator is not an $H^1$-conforming piecewise polynomial space over the decomposition~$\Omegah$,
but rather on a sufficiently fine shape regular refinement of~$\Omegah$; see Appendix~\ref{appendix:proof-Lemma-splitting} below for more details.
\eremk
\end{remark}

We are left to prove the main result of the section, namely the following discrete G{\aa}rding inequality for the form defined in~\eqref{trisesquilinear-form}.
\begin{thm} \label{theorem:discrete-Garding}
Let~$\Th( (\cdot,\cdot,\cdot), (\cdot,\cdot,\cdot))$ be defined as
in~\eqref{trisesquilinear-form} and the interface~$\Gamma$ be smooth.
Then, the following G{\aa}rding inequality is valid:
there exist a constant $\varepsilon >0$ only depending on $\Omega$ and~$\gamma_{SR}$ in~\eqref{eq:SR} (see~\eqref{various:epsilon:1}),
three constants~$\afrakz>0$, $\bfrakz>0$, and~$\dfrakz>0$
depending additionally on $\Ad$, and a positive constant~$\cG(\k)$
depending additionally on~$\k$ such that, for all~$\afrak \ge \afrakz$, $\bfrak \le \bfrakz$, and~$\dfrak \le \dfrakz$ in~\eqref{dG-parameters},
\begin{equation} \label{Garding:inequality}
\begin{split}
& (\Re + \varepsilon \Im)\Th ( (\vh, \lambdah, \vexth), (\vh, \lambdah, \vexth))  \\
& \gtrsim \Vert \Ad^{1/2}\nablah \vh \Vert ^2_{0,\Omega}  + \varepsilon \k^{-1} \Vert \delta ^{1/2} \nablah \vh \cdot \nGamma \Vert^2_{0,\Gamma} + \varepsilon \k \Vert \Ad^{1/2}\alpha^{1/2} \llbracket \vh \rrbracket \Vert^2_{0,\FhI}  \\
& \quad + \varepsilon \k^{-1} \Vert \Ad^{1/2}\beta^{1/2} \llbracket \nablah \vh \rrbracket \Vert^2_{0,\FhI} + \varepsilon \k \Vert (1-\delta) \vh \Vert^2_{0,\Gamma} + \Vert \lambdah \Vert^2_{-\frac{1}{2}, \Gamma} + \Vert \vexth \Vert^2_{\frac{1}{2}, \Gamma} \\
& \quad  -\Vert \kn \, \vh \Vert^2_{0,\Omega}
- \cG(\k) \left( \Vert \lambdah \Vert^2_{-\frac{3}{2}, \Gamma}  +  \Vert \vexth \Vert^2_{-\frac{1}{2}, \Gamma} \right) \quad \forall (\vh,\lambdah, \vexth) \in \Vh \times \Wh \times \Zh.
\end{split}
\end{equation}
The hidden constant depends on $c_{\Vz}$ and $c_{\Wz}$
in~\eqref{coercivity:operators:Laplace} but not on $\k$.
\end{thm}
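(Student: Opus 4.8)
The plan is to decompose the sesquilinear form $\Th$ into its interior volume/boundary part and its pure-BEM part, and to handle each with the two G\aa{}rding-type tools already available: Proposition~\ref{proposition:continuity-coercivity:dG:weakened} for the interior, and the continuous G\aa{}rding inequality of Theorem~\ref{theorem:Garding:inequality:cont} for the boundary operators. The central issue is that these two ingredients do not glue together directly, because the coupling terms (those linking $\mh$ with $\vh$, and $\uhext$ with $\lambdah$) sit awkwardly between the volume and surface norms, and the operator coercivity from Theorem~\ref{theorem:Garding:inequality:cont} is stated for $H^{1/2}(\Gamma)$- and $H^{-1/2}(\Gamma)$-traces of \emph{continuous} $H^1(\Omega)$ functions, whereas $\vh$ is only piecewise smooth. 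This is exactly where the reconstruction operator $\Pkar$ of Theorem~\ref{lemma:Karakashian-style} enters, with the choice $\ell=\p$.

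First I would evaluate $(\Re+\varepsilon\Im)\Th((\vh,\lambdah,\vexth),(\vh,\lambdah,\vexth))$ on the diagonal. The coupling terms $-(\mh,\delta(\i\k)^{-1}\nablah\vh\cdot\nGamma+(1-\delta)\vh)_{0,\Gamma}$ and $+\langle\,\cdot\,,\lambdah\rangle$ are designed (see Section~\ref{subsection:second-third-equation}) so that, when tested diagonally, the $\mh$--$\vexth$ and $\lambdah$--$\uhext$ contributions produce matching real parts that can be absorbed; I would isolate the genuinely problematic cross terms and estimate them by Young's inequality, splitting each into a piece controlled by the interior DG-norm quantities appearing in~\eqref{coercivity:Omega:weakened} and a piece controlled by $\|\lambdah\|_{-1/2,\Gamma}^2+\|\vexth\|_{1/2,\Gamma}^2$. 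The pure-BEM block, namely the terms built from $\Wk,\Kk,\Kprimek,\Vk$ acting on $\uhext$ and $\mh$, is structurally identical to the form $\T$ of~\eqref{form:T:for:Helmholtz} restricted to its second and third components; hence Theorem~\ref{theorem:Garding:inequality:cont} supplies the positive lower bound $c(\|\lambdah\|_{-1/2,\Gamma}^2+\|\vexth\|_{1/2,\Gamma}^2)$ together with the negative compact remainder $-\cG(\k)(\|\lambdah\|_{-3/2,\Gamma}^2+\|\vexth\|_{-1/2,\Gamma}^2)$, which is precisely the $\cG(\k)$ term in~\eqref{Garding:inequality}.

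The step I expect to be the main obstacle is controlling the boundary trace of $\vh$ against the surface norm $\|\vexth\|_{1/2,\Gamma}$. The difficulty is that $\vh\in\Vh$ is discontinuous, so its Dirichlet trace on $\Gamma$ has no a priori $H^{1/2}(\Gamma)$ meaning, and the cross terms coupling $\vh|_\Gamma$ with $\vexth$ cannot be bounded naively. Here I would apply $\Pkar$ to $\vh$ to produce $\Pkar\vh\in H^1(\Omega)$, use~\eqref{K-propertyC} to control $\|\msf^{-1/2}\p\,(I-\Pkar)\vh\|_{0,\Gamma}$ by the right-hand side of~\eqref{coercivity:Omega:weakened} (the jump and gradient terms), and estimate the trace of $\Pkar\vh$ by a standard trace inequality in $H^1(\Omega)$ using~\eqref{K-propertyA} and~\eqref{K-propertyB}. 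The quantities $\msf$, $\p$, and the DG-parameter scalings~\eqref{dG-parameters} must be tracked carefully so that the reconstruction-operator estimates align dimensionally with the $\k$-weighted boundary norms in the DG-norm~\eqref{dG:norm:1}.

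Finally I would collect terms: choose $\varepsilon>0$ small enough (depending only on $\Omega$ and $\gamma_{SR}$, giving~\eqref{various:epsilon:1}), then fix $\afrakz,\bfrakz,\dfrakz$ via~\eqref{aleph} and~\eqref{smallness-delta} so that Proposition~\ref{proposition:continuity-coercivity:dG:weakened} yields one-half of the interior DG-norm and the full boundary penalty terms, absorb all Young-inequality remainders into these positive quantities, and retain $-\|\kn\,\vh\|_{0,\Omega}^2$ and the $-\cG(\k)$ compact term as the two negative contributions. Assembling these gives exactly~\eqref{Garding:inequality}, with the hidden constant depending on $c_{\Vz}$ and $c_{\Wz}$ but not on $\k$, as claimed.
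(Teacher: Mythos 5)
Your overall architecture does coincide with the paper's: the interior block via Proposition~\ref{proposition:continuity-coercivity:dG:weakened}, the pure-BEM block via coercivity of the Laplace operators plus compactly smoothing differences~\eqref{compact:part}, the coupling terms via weighted Young inequalities, and the reconstruction operator of Theorem~\ref{lemma:Karakashian-style} with $\ell=\p$. There is, however, a genuine gap: you never control the $L^2(\Gamma)$-type norms of the mortar $\lambdah$ that the \emph{discrete} form produces and that do not appear anywhere in~\eqref{Garding:inequality}. Three terms generate them: (i) the added consistency term $\langle \delta(\i\k)^{-1}\mh,\lambdah\rangle$, which on the diagonal is purely imaginary and, after taking $\Re+\varepsilon\Im$, contributes the strictly negative quantity $-\varepsilon\k^{-1}\Vert\delta^{1/2}\lambdah\Vert_{0,\Gamma}^2$ — this term is absent from the continuous form $\T$, so your claim that the BEM block is ``structurally identical'' to the second and third components of $\T$ fails exactly here; (ii) the real cross term $-2\Re\langle\lambdah,\delta(\i\k)^{-1}\nablah\vh\cdot\nGamma\rangle$, whose Young splitting leaves another copy of $\k^{-1}\Vert\delta^{1/2}\lambdah\Vert_{0,\Gamma}^2$; (iii) the $(I-\Pkar)$ remainder in your own splitting, which pairs with $\Vert\msf^{1/2}\p^{-1}\lambdah\Vert_{0,\Gamma}$. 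The only way to close the argument is the polynomial inverse inequality $\Vert\msf^{1/2}\p^{-1}\lambdah\Vert_{0,\Gamma}\lesssim\Vert\lambdah\Vert_{-\frac{1}{2},\Gamma}$, valid for $\lambdah\in\Wh$ (\cite[Lemma~A.1]{MR3667020}), combined with the scaling $\delta=\dfrak\,\k\msf/\p^2$ from~\eqref{dG-parameters} and the smallness of $\dfrak$, so that the resulting multiples of $\Vert\lambdah\Vert_{-\frac{1}{2},\Gamma}^2$ are absorbed into $c_{\Vz}\Vert\lambdah\Vert_{-\frac{1}{2},\Gamma}^2$. This inequality is false on all of $L^2(\Gamma)$; it is precisely where the discreteness of $\lambdah$ enters and where the discrete G{\aa}rding inequality genuinely differs from the continuous one, and your proposal never invokes it.

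Two smaller corrections. First, $\Th$ contains no coupling between $\vh$ and $\vexth$: the volume unknown couples to the \emph{mortar} through $\langle(1-\delta)\lambdah,\vh\rangle$ and the $\delta$-weighted normal-derivative term, so the trace obstacle you identify is real but misattributed; your proposed remedy ($\Pkar$ together with~\eqref{K-propertyA}--\eqref{K-propertyC}) is exactly the paper's treatment, but applied against $\Vert\lambdah\Vert_{-\frac{1}{2},\Gamma}$, not $\Vert\vexth\Vert_{\frac{1}{2},\Gamma}$. Second, citing Theorem~\ref{theorem:Garding:inequality:cont} for the BEM block is not quite sufficient as stated: that result bounds $\Re\,\T$ only, whereas the discrete proof must take $\Re+\varepsilon\Im$ (the $\varepsilon\Im$ part is what makes the interior penalty terms appear with positive sign), so the imaginary parts of $\langle\lambdah,\Vk\lambdah\rangle$, $\langle\Wk\vexth,\vexth\rangle$, and the cross terms need separate estimates. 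This is fixable by the $\Vz/\Wz$ splitting — the Laplace diagonal terms are real and coercive by~\eqref{coercivity:operators:Laplace}, the differences are smoothing by~\eqref{compact:part} — which is in fact what the paper does directly, term by term, rather than invoking the continuous theorem.
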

\begin{proof}
Observe that
\[
\begin{split}
& \Th ( (\vh, \lambdah, \vexth), (\vh, \lambdah, \vexth)) \\
& = \sum_{\E \in \taun}  \ahE (\vh,\vh) + \bhGamma (\vh,\vh)  -   (\lambdah, \delta (\i\k)^{-1} \nablah \vh \cdot \nGamma + (1-\delta)\vh)_{0,\Gamma}  \\
&  \quad - \langle (-\Wk -\i\k (\frac{1}{2}-\Kk) + \i\k (\frac{1}{2}+\Kprimek+\i\k\Vk)) \vexth - (\frac{1}{2} + \Kprimek + \i\k \Vk) \lambdah, \vexth \rangle\\
& \quad - \overline{\langle \lambdah, \delta (\i\k)^{-1} \nablah \vh \cdot \nGamma - (1 - \delta) \vh - \delta (\i\k)^{-1} \lambdah \rangle} -  \overline{ \langle \lambdah, (\frac{1}{2} +\Kk) \vexth - \Vk (\lambdah - \i\k \vexth)    \rangle } .
\end{split}
\]
Equivalently, we write
\[
\begin{split}
& \Th ( (\vh, \lambdah, \vexth), (\vh, \lambdah, \vexth))  = \sum_{\E \in \taun} \ahE(\vh, \vh)   +\bhGamma(\vh, \vh)  \\
&  \quad +  \langle \Wk \vexth, \vexth \rangle + \i\k \langle (\frac{1}{2} - \Kk) \vexth, \vexth \rangle - \i\k \langle (\frac{1}{2} + \Kprimek + \i\k \Vk) \vexth, \vexth \rangle \\
&  \quad +\langle (\frac{1}{2} + \Kprimek) \lambdah, \vexth \rangle + \i\k \langle \Vk \lambdah, \vexth \rangle  - \i \k^{-1} \Vert \delta^{1/2} \lambdah \Vert^2_{0,\Gamma} \\
& \quad  -  \overline{\langle \lambdah, (\frac{1}{2} + \Kk) \vexth\rangle} + \overline{\langle \lambdah, \Vk \lambdah \rangle}  - \i\k \overline{\langle \lambdah, \Vk \vexth \rangle} \\
& \quad -2\Re\left(  \langle \lambdah, \delta (\i\k)^{-1} \nablah \vh \cdot \nGamma \rangle    \right) -2 \i \, \Im \left(  \langle (1-\delta)\lambdah, \vh \rangle   \right),
\end{split}
\]
and thus
\[
\begin{split}
& \Th ( (\vh, \lambdah, \vexth), (\vh, \lambdah, \vexth))  
= \sum_{\E \in \taun} \ahE(\vh, \vh)   +\bhGamma(\vh, \vh)   \\
&  \quad + \overline {\langle \lambdah, \Vk \lambdah \rangle} + \langle \Wk \vexth, \vexth \rangle + \k^2 \langle \Vk \vexth , \vexth \rangle - \i\k \langle (\Kprimek + \Kk) \vexth, \vexth \rangle \\
& \quad  + \left[  \langle (\frac{1}{2} + \Kprimek) \lambdah, \vexth \rangle - \overline{\langle \lambdah, (\frac{1}{2} + \Kk)\vexth\rangle} \right] - \i\k^{-1}  \Vert  \delta^{1/2} \lambdah \Vert^2_{0,\Gamma}  \\
& \quad + \i\k \langle \Vk \lambdah, \vexth \rangle - \i\k   \overline{\langle \lambdah, \Vk \vexth \rangle }  -2\Re\left(  \langle \lambdah, \delta (\i\k)^{-1} \nablah \vh \cdot \nGamma \rangle    \right) -2 \i \, \Im \left(  \langle (1-\delta)\lambdah, \vh \rangle   \right) .
\end{split}
\]
For some~$\varepsilon>0$ to be fixed sufficiently small below, we take the $\Re + \varepsilon \Im$ part on both sides and get
\begin{equation} \label{long-identity}
\begin{split}
& (\Re + \varepsilon \Im)\Th ( (\vh, \lambdah, \vexth), (\vh, \lambdah, \vexth)) = \sum_{\E \in \taun}  (\Re + \varepsilon \Im) [\ahE(\vh, \vh) + \bhGamma(\vh, \vh)]    \\
&  + (\Re + \varepsilon \Im) [\overline {\langle \lambdah, \Vk \lambdah \rangle} + \langle \Wk \vexth, \vexth \rangle + \k^2 \langle \Vk \vexth , \vexth \rangle - \i\k \langle (\Kprimek + \Kk) \vexth, \vexth \rangle] \\
&  + (\Re + \varepsilon \Im) \left[  \langle (\frac{1}{2} + \Kprimek) \lambdah, \vexth \rangle \right]  - (\Re + \varepsilon \Im) \left[ \overline{\langle \lambdah, (\frac{1}{2} + \Kk)\vexth\rangle} \right] \\
& - \varepsilon \k^{-1}  \Vert \delta^{1/2}\lambdah \Vert^2_{0,\Gamma}  +(\Re + \varepsilon \Im) \left[ + \i\k \langle \Vk \lambdah, \vexth \rangle - \i\k   \overline{\langle \lambdah, \Vk \vexth \rangle} \right] \\
& -2\Re\left(  \langle \lambdah, \delta (\i\k)^{-1} \nablah \vh \cdot \nGamma \rangle    \right) -2\varepsilon  \Im \left(  \langle (1-\delta)\lambdah, \vh \rangle   \right)  \\
&  =: \sum_{\E \in \taun}  (\Re + \varepsilon \Im) [\ahE(\vh, \vh)  + \bhGamma(\vh, \vh)] + \sum_{i=1}^{10} T_i  . \\
\end{split}
\end{equation}
We deal with the terms~$T_i$, for~$i=1,\dots, 10$, separately.

The continuity of~$\Vk-\Vz: H^{-\frac{3}{2}}(\Gamma) \rightarrow H^{\frac{3}{2}}(\Gamma)$, see~\eqref{compact:part},
the fact that~$ \langle \lambdah, \Vz \lambdah \rangle$ is real, and~$\varepsilon \lesssim 1$ imply
\begin{equation} \label{T1}
\begin{split}
& T_1 := (\Re + \varepsilon \Im) (\overline{\langle \lambdah, \Vk \lambdah \rangle}) \\
& \; = \langle \lambdah, \Vz \lambdah \rangle + (\Re+\varepsilon\Im)(\overline{\langle \lambdah, (\Vk-\Vz) \lambdah \rangle}) \\
& \overset{\eqref{coercivity:operators:Laplace}}{\geq} c_{\Vz} \Vert \lambdah \Vert^2_{-\frac{1}{2}, \Gamma} - (1+\varepsilon)\Vert \lambdah \Vert_{-\frac{3}{2}, \Gamma} \Vert (\Vk - \Vz) \lambdah \Vert_{\frac{3}{2}, \Gamma} \geq c_{\Vz} \Vert \lambdah \Vert^2_{-\frac{1}{2},\Gamma} - c_1(\k)\Vert \lambdah \Vert^2_{-\frac{3}{2},\Gamma} . \\
\end{split}
\end{equation}
Analogously, the continuity of~$\Wk-\Wz: H^{-\frac{1}{2}}(\Gamma) \rightarrow H^{\frac{1}{2}}(\Gamma)$, see~\eqref{compact:part},
and the fact that~$\langle \Wz \vexth, \vexth \rangle$ is real and~$\varepsilon \lesssim 1$ imply
\begin{equation} \label{T2}
\begin{split}
&  T_2 :=  (\Re + \varepsilon\Im) (\langle \Wk \vexth , \vexth \rangle) \\
& \; =  \langle \Wz \vexth, \vexth \rangle + (\Re+\varepsilon\Im) (\langle (\Wk - \Wz) \vexth , \vexth \rangle )\\
& \overset{\eqref{coercivity:operators:Laplace}}{\ge} c_{\Wz} \vert \vexth \vert^2_{\frac{1}{2},\Gamma} - (1+\varepsilon) \Vert (\Wk - \Wz) \vexth \Vert_{\frac{1}{2},\Gamma}  \Vert \vexth \Vert _{-\frac{1}{2},\Gamma}
\ge c_{\Wz} \Vert \vexth \Vert^2_{\frac{1}{2}, \Gamma} - c_2(\k)\Vert \vexth \Vert^2_{-\frac{1}{2},\Gamma}.
\end{split}
\end{equation}
By the discussion after~\eqref{single:layer:operator} the operator $\Vk:H^{-\frac{1}{2}}(\Gamma)\rightarrow H^{\frac{1}{2}}(\Gamma)$ is continuous so that 
we get for $\varepsilon \in (0,1]$ 
\begin{align}
\label{T3} 
T_3 & : = \k^2  (\Re + \varepsilon \Im)  (\langle \Vk \vexth, \vexth \rangle) \ge - c_3(\k) \Vert\vexth \Vert^2_{-\frac{1}{2},\Gamma}.
\end{align}
Owing to the continuity of $\Kprimek: H^{-\frac{1}{2}}(\Gamma) \rightarrow H^{-\frac{1}{2}}(\Gamma)$ and $\Kk: H^{-\frac{1}{2}}(\Gamma) \rightarrow H^{-\frac{1}{2}}(\Gamma)$,
and~$\varepsilon \lesssim 1$, we note that
\begin{equation} \label{T4}
\begin{split}
T_4 	
& := - (\Re + \varepsilon\Im) (\i\k \langle \Kprimek \vexth, \vexth \rangle  + \i\k \langle \Kk \vexth, \vexth \rangle )\\
& \ge  -\k (1+\varepsilon) \Vert \Kprimek \vexth \Vert_{-\frac{1}{2},\Gamma} \Vert \vexth \Vert_{\frac{1}{2}, \Gamma} -\k (1+\varepsilon) \Vert \Kk \vexth \Vert_{-\frac{1}{2}, \Gamma} \Vert \vexth \Vert_{\frac{1}{2},\Gamma} \\
& \ge -\frac{c_{\Wz}}{5} \Vert \vexth \Vert ^2_{\frac{1}{2},\Gamma} - c \k^2 \Vert \Kprimek \vexth \Vert^2_{-\frac{1}{2}, \Gamma} -\frac{ c_{\Wz}}{5} \Vert \vexth \Vert ^2_{\frac{1}{2},\Gamma} - c \k^2 \Vert \Kk \vexth \Vert^2_{-\frac{1}{2}, \Gamma} \\
& \ge -\frac{ 2}{5} c_{\Wz} \Vert \vexth \Vert^2_{\frac{1}{2},\Gamma} - c_4 (\k) \Vert \vexth \Vert^2_{-\frac{1}{2}, \Gamma}.
\end{split}
\end{equation}
Next, we focus on the term~$T_5$. We observe that
\[
\begin{split}
T_5 	& : =  (\Re+ \varepsilon\Im) ( \langle (\frac{1}{2} + \Kprimek) \lambdah, \vexth \rangle - \overline{ \langle \lambdah, (\frac{1}{2} + \Kk)\vexth\rangle} ) \\
	& =  \frac{1}{2} (\Re+ \varepsilon\Im) ( \langle \lambdah , \vexth \rangle - \overline{\langle \lambdah , \vexth \rangle}) + (\Re+ \varepsilon\Im)  ( \langle \Kprimek \lambdah , \vexth \rangle  - \overline{\langle \lambdah , \Kk \vexth \rangle}   ) \\
	& =: T_{5,1} +  T_{5,2}.
\end{split}
\]
First, we focus on the term~$T_{5,1}$:
\begin{equation} \label{T51}
\begin{split}
T_{5,1} =  \varepsilon \Im ( \langle \lambdah , \vexth \rangle) \ge - \varepsilon \Vert \lambdah \Vert_{-\frac{1}{2}, \Gamma} \Vert \vexth \Vert_{\frac{1}{2}, \Gamma} \ge -  \frac{1}{2}  \varepsilon \left( \Vert \lambdah \Vert^2_{-\frac{1}{2}, \Gamma}+ \Vert \vexth \Vert^2_{\frac{1}{2}, \Gamma}\right).
\end{split}
\end{equation}
To show a bound on the term~$T_{5,2}$, we use~\cite[eqn.~(1.2)]{melenk2012mapping}, \eqref{compact:part}, and~$\varepsilon \lesssim 1$:
\begin{equation} \label{T52}
\begin{split}
T_{5,2}	& =  (\Re+\varepsilon \Im) ( \langle (\Kprimek - \Kprimez) \lambdah, \vexth \rangle - \overline{\langle \lambdah, (\Kk - \Kz)\vexth\rangle }    ) \\
		& \geq - (1+\varepsilon) \Vert (\Kprimek - \Kprimez) \lambdah \Vert_{\frac{1}{2}, \Gamma} \Vert \vexth \Vert_{-\frac{1}{2}, \Gamma} - \Vert \lambdah \Vert_{-\frac{3}{2}, \Gamma} \Vert (\Kk - \Kz) \vexth \Vert_{\frac{3}{2}, \Gamma}\\
		& \gtrsim - c_{5,2}(\k) \Vert \lambdah \Vert^2_{-\frac{3}{2}, \Gamma} - c_{5,2}(\k)  \Vert \vexth \Vert^2_{-\frac{1}{2}, \Gamma}.
\end{split}
\end{equation}
We show a bound on the term~$T_6$ using the polynomial inverse inequality of~\cite[Lemma~A.1]{MR3667020} with constant~$\cinvG$ and~\eqref{dG-parameters}:
\begin{equation} \label{T6}
T_6 := - \varepsilon\k^{-1}  \Vert \delta^{1/2} \lambdah \Vert_{0,\Gamma}^2 \ge  -  \varepsilon \cinvG \dfrak \Vert \lambdah \Vert_{-\frac{1}{2},\Gamma}^2.
\end{equation}
Using the continuity of~$\Vk : H^{-\frac{3}{2}}(\Gamma) \rightarrow H^{-\frac{1}{2}}(\Gamma)$ and~$\varepsilon \lesssim 1$, we get
\begin{equation} \label{T7}
\begin{split}
T_7 	& : = \k  (\Re + \varepsilon \Im) (\i \langle \Vk \lambdah, \vexth \rangle) \geq - (1+\varepsilon)\k \Vert \Vk \lambdah \Vert_{-\frac{1}{2},\Gamma} \Vert \vexth \Vert_{\frac{1}{2}, \Gamma} \\
	& \ge -\frac{1}{10}  c_{\Wz} \Vert \vexth \Vert^2_{\frac{1}{2}, \Gamma} - c_7 (\k) \Vert \lambdah \Vert^2_{-\frac{3}{2},\Gamma}.
\end{split}
\end{equation}
Besides, using the continuity of $\Vk : H^{\frac{1}{2}}(\Gamma) \rightarrow H^{\frac{3}{2}}(\Gamma)$, we prove that
\begin{equation} \label{T8}
\begin{split}
T_8 	& := - \k   (\Re + \varepsilon \Im) (\i \langle \lambdah, \Vk \vexth \rangle) \geq - (1+\varepsilon) \k \Vert \lambdah \Vert_{-\frac{3}{2},\Gamma} \Vert \Vk \vexth \Vert_{\frac{3}{2}, \Gamma} \\
	    & \ge -\frac{ c_{\Wz}}{5}  \Vert \vexth \Vert^2_{\frac{1}{2}, \Gamma} - c_8(\k) \Vert \lambdah \Vert^2_{-\frac{3}{2},\Gamma}. \\
\end{split}
\end{equation}
Next, we focus on the term~$T_{9}$. 
Using again the polynomial inverse inequality of~\cite[Lemma~A.1]{MR3667020}, the Young inequality with weight~$\varepsilon/4$, and~\eqref{dG-parameters},
we arrive at
\begin{equation} \label{T9}
\begin{split}
T_9 	& := -2\Re\left(  \langle \lambdah, \delta (\i\k)^{-1} \nablah \vh \cdot \nGamma \rangle    \right) \ge -  2\k^{-1/2} \Vert \delta^{1/2} \lambdah \Vert_{0,\Gamma}\, \k^{-1/2} \Vert \delta ^{1/2} \nablah \vh \cdot \nGamma \Vert_{0,\Gamma}\\
	& \ge -\frac{4}{\varepsilon} \k^{-1} \Vert \delta^{1/2} \lambdah \Vert^2_{0,\Gamma} - \frac{\varepsilon}{4} \k^{-1} \Vert \delta^{1/2} \nablah \vh \cdot \nGamma \Vert^2_{0,\Gamma} \\
	& \overset{\eqref{dG-parameters}}{\ge} -   \frac{4 \cinvG \dfrak}{\varepsilon}  \Vert \lambdah \Vert^2_{-\frac{1}{2}, \Gamma} - \frac{\varepsilon \k^{-1}}{4}  \Vert \delta ^{1/2} \nablah \vh \cdot \nGamma \Vert^2_{0,\Gamma}.\\
\end{split}
\end{equation}
As for the term~$T_{10}$, we proceed as follows. Recall that
\[
T_{10} := -2\varepsilon  \Im \left \langle (1-\delta) \lambdah, \vh \right \rangle .
\]
Let~$\Pkar$ be the operator introduced in Theorem~\ref{lemma:Karakashian-style}, with~$\ell=\p$.
Then, we use a trace inequality and again the polynomial inverse inequality of~\cite[Lemma~A.1]{MR3667020} to deduce
\[
\begin{split}
\langle \lambdah , \vh \rangle 
& = \langle \lambdah, \Pkar \vh \rangle + \langle \lambdah, (1-\Pkar) \vh    \rangle\\
& \le \Vert \lambdah \Vert_{-\frac{1}{2}, \Gamma} \Vert \Pkar \vh \Vert_{\frac{1}{2}, \Gamma} + \Vert  \msf^{1/2} \p^{-1}\lambdah \Vert_{0,\Gamma} \Vert \msf^{-1/2}\p  (I-\Pkar) \vh \Vert_{0,\Gamma}\\
& \overset{\mathclap{\eqref{K-propertyA}-\eqref{K-propertyC}}}{\lesssim} \Vert \lambdah\Vert_{-\frac{1}{2},\Gamma} \left(\Vert \nablah \vh\Vert_{0,\Omega}  +\Vert \vh\Vert_{0,\Omega} + \Vert\msf^{-1/2}\p\llbracket \vh\rrbracket \Vert_{0,\FhI} \right)\\
&  \lesssim \Vert \lambdah \Vert_{-\frac{1}{2}, \Gamma} \left( \numin^{-1/2} \Vert \nu^{1/2} \nablah \vh \Vert_{0,\Omega}+ (k_0c_0)^{-1} \Vert \kn\,\vh \Vert_{0,\Omega} 
       +\numin^{-1/2} \k^{1/2} \afrak^{-1/2} \Vert \nu^{1/2}\alpha^{1/2} \llbracket \vh \rrbracket \Vert_{0,\FhI} \right),
\end{split}
\]
where the last inequality follows from the bounds~$\nu\ge \numin$ and~$|kn|\ge k_0c_0$, and from
the definition of~$\alpha$ in~\eqref{dG-parameters}.

Let~$\epsilontilde>0$ be a positive constant, which will be fixed below; see~\eqref{various:epsilon:2}.
The Young inequality gives
\begin{equation} \label{T10}
\begin{split}
&  -2\varepsilon  \Im (\langle (1-\delta) \lambdah , \vh \rangle ) \geq -\epsilontilde^{-1} \varepsilon \Vert \lambdah \Vert_{-\frac{1}{2}, \Gamma}^2\\
& \quad\quad\quad\quad -c_{10} \epsilontilde \varepsilon (\numin^{-1}\Vert \nu^{1/2}\nablah \vh \Vert_{0,\Omega}^2  +  (k_0c_0)^{-2} \Vert \kn\, \vh\Vert_{0,\Omega} ^2+ \numin^{-1}\k \Vert \nu^{1/2} \alpha^{1/2} \llbracket \vh \rrbracket \Vert_{0,\FhI}^2 ),
\end{split}
\end{equation}
where $c_{10}$ depends on $\afrak^{-1/2}$.
Provided that $\afrak$ is sufficiently large and $\dfrak$, $\bfrak$ are sufficiently small, depending on $\varepsilon$,
we insert~\eqref{coercivity:Omega:weakened}, \eqref{T1}, \eqref{T2}, \eqref{T3}, \eqref{T4}, \eqref{T51}, \eqref{T52}, \eqref{T7}, \eqref{T8}, \eqref{T9}, and~\eqref{T10} into~\eqref{long-identity}, and arrive at 
\[
\begin{split}
& (\Re + \varepsilon \Im)\Th ( (\vh, \lambdah, \vexth), (\vh, \lambdah, \vexth))  \\
& \geq c_{\Vz} \Vert \lambdah \Vert^2_{-\frac{1}{2}, \Gamma} - {c}_1(\k) \Vert \lambdah \Vert^2_{-\frac{3}{2}, \Gamma} + c_{\Wz} \Vert \vexth \Vert^2_{\frac{1}{2}, \Gamma} - c_2(\k) \Vert \vexth \Vert^2_{-\frac{1}{2},\Gamma} - c_3(\k) \Vert \vexth \Vert^2_{-\frac{1}{2}, \Gamma}\\
&  \quad -\frac{2}{5}  c_{\Wz} \Vert \vexth \Vert^2_{\frac{1}{2}, \Gamma} - c_4(\k) \Vert \vexth \Vert^2_{-\frac{1}{2}, \Gamma} -\frac{\varepsilon}{2} \Vert \lambdah \Vert^2_{-\frac{1}{2},\Gamma} - \frac{\varepsilon}{2} \Vert \vexth \Vert^2_{\frac{1}{2}, \Gamma} \\
& \quad- c_{5,2}(\k)\Vert \lambdah \Vert^2_{-\frac{3}{2},\Gamma}  - c_{5,2}(\k)\Vert \vexth \Vert^2_{-\frac{1}{2}, \Gamma} - \varepsilon  \cinvG \dfrak \Vert \lambdah \Vert^2_{-\frac{1}{2},\Gamma}\\
& \quad -\frac{1}{10} c_{\Wz} \Vert \vexth \Vert^2_{\frac{1}{2}, \Gamma} - c_7 (\k) \Vert \lambdah \Vert^2_{-\frac{3}{2}, \Gamma} - \frac{1}{5} c_{\Wz} \Vert \vexth \Vert^2_{\frac{1}{2}, \Gamma} - c_8(\k) \Vert \lambdah \Vert^2_{-\frac{3}{2},\Gamma}\\
& \quad - \cinvG \dfrak 4 \varepsilon^{-1} \Vert \lambdah
\Vert^2_{-\frac{1}{2}, \Gamma} -\epsilontilde^{-1}\varepsilon \Vert
\lambdah \Vert^2_{-\frac{1}{2}, \Gamma}\\
& \quad- c_{10}  \epsilontilde \varepsilon \left( \numin^{-1}\Vert \nu^{1/2}\nablah \vh
  \Vert_{0,\Omega}^2  +  (k_0c_0)^{-2} \Vert \kn\, \vh
  \Vert_{0,\Omega} ^2+ \numin^{-1} \k \Vert \nu^{1/2}\alpha^{1/2} \llbracket \vh \rrbracket \Vert_{0,\FhI}
  ^2 \right) \\
& \quad + \frac{1}{2}\Vert \Ad^{1/2}\nabla _\h \vh \Vert^2_{0,\Omega} - \Vert \kn\,  \vh \Vert^2_{0,\Omega} + \frac{1}{2} \left( \varepsilon \k^{-1} \Vert \Ad^{1/2}\beta^{1/2} \llbracket \nablah \vh \rrbracket \Vert^2_{0,\FhI} + \varepsilon \k \Vert \Ad^{1/2}\alpha^{1/2} \llbracket \vh \rrbracket \Vert^2_{0,\FhI} \right) \\
& \quad + \frac{1}{2}\left( \varepsilon \k^{-1} \Vert \delta^{1/2} \nablah \vh \cdot \nGamma \Vert^2_{0,\Gamma}  +  \varepsilon \k \Vert (1-\delta) \vh \Vert^2_{0,\Gamma}\right) - \frac{\varepsilon}{4} \k^{-1} \Vert \delta^{1/2} \nablah \vh \cdot \nGamma \Vert^2_{0,\Gamma} .
\end{split}
\]
Simple computations yield
\[
\begin{split}
& (\Re + \varepsilon \Im)\Th ( (\vh, \lambdah, \vexth), (\vh, \lambdah, \vexth))  \\
& \geq (1/2 - c_{10} \epsilontilde
\varepsilon\numin^{-1}) \Vert \Ad^{1/2} \nablah \vh \Vert
^2_{0,\Omega} - (c_{10}\epsilontilde
\varepsilon(k_0c_0)^{-2} + 1
)
\Vert \kn\,\vh \Vert^2_{0,\Omega} + \k^{-1} \varepsilon/4 \Vert \delta ^{1/2} \nablah \vh \cdot \nGamma \Vert^2_{0,\Gamma}\\
& \quad + \varepsilon \k(1/2 -c_{10}\epsilontilde \numin^{-1}) \Vert \Ad^{1/2}\alpha^{1/2} \llbracket \vh \rrbracket \Vert^2_{0,\FhI} + \frac{\varepsilon}{2\k} \Vert \Ad^{1/2}\beta^{1/2} \llbracket \nablah\vh \rrbracket \Vert^2_{0,\FhI} + \frac{1}{2} \varepsilon \k \Vert (1-\delta) \vh \Vert^2_{0,\Gamma}\\
& \quad + \left( c_{\Vz} - \frac{\varepsilon}{2} - \varepsilon \cinvG \dfrak -  4 \cinvG \dfrak \varepsilon^{-1} - \epsilontilde^{-1} \varepsilon \right) \Vert \lambdah \Vert^2_{-\frac{1}{2}, \Gamma}  - \left( c_1(\k) +c_{5,2}(\k)+ c_7(\k) + c_8 (\k) \right) \Vert \lambdah \Vert^2_{-\frac{3}{2}, \Gamma}\\
& \quad + \left( \frac{3}{10} c_{\Wz}- \frac{\varepsilon}{2} \right) \Vert \vexth \Vert^2_{\frac{1}{2}, \Gamma} - (c_2(\k) + c_3(\k) + c_4(\k) + c_{5,2}(\k)) \Vert \vexth \Vert^2_{-\frac{1}{2}, \Gamma}.
\end{split}
\]
We select
\begin{equation} \label{various:epsilon:2}
\epsilontilde :=\frac{ \numin}{4c_{10}},
\end{equation}
and fix $\varepsilon$ as 
\begin{equation} \label{various:epsilon:1}
\varepsilon := \min\left\{ \frac{c_{\Vz}}{3(1/2+ \cinvG + 4 c_{10} \numin^{-1})} ,\frac{c_{\Wz}}{10},1 \right\},
\end{equation}
where we recall that the constants~$c_{\Vz}$ and~$c_{\Wz}$ are
from~\eqref{coercivity:operators:Laplace},~$\numin$ is a lower bound
of the coefficient~$\Ad$ (see
Section~\ref{subsection:model:problem}),~$\cinvG$ is the inverse
inequality constant in~\eqref{T6}, and~$c_{10}$ is from~\eqref{T10}.

Using~\eqref{various:epsilon:2} and~\eqref{various:epsilon:1}, we investigate the constants of the terms
appearing in the DG norm: 
\begin{itemize}
\item[*] $ (1/2 - c_{10} \epsilontilde \varepsilon \numin^{-1}) \ge 1/2-\varepsilon/4>1/4$;
\item[*] $- (c_{10}\epsilontilde \varepsilon(k_0c_0)^{-2} + 1)  \ge -(\frac{1}{4}\numin\varepsilon (k_0c_0)^{-2}+1)$;
\item[*] $\varepsilon \k(1/2 -c_{10}\epsilontilde  \numin^{-1})=\varepsilon k/4$;
\item[*] by taking~$\mathfrak d$ in~\eqref{dG-parameters} also fulfilling
\[
\mathfrak{d} \leq \mathfrak{d}_0\leq\frac{c_{\Vz}}{12 \cinvG} \varepsilon ,
\]
we also have
\[
\begin{split}
\left( c_{\Vz} - \frac{\varepsilon}{2} -4  \cinvG \dfrak 
- \cinvG \dfrak \varepsilon^{-1} -\epsilontilde^{-1} \varepsilon \right)
& =  c_{\Vz}- \varepsilon \left(\frac{1}{2} + \cinvG \dfrak + 4 c_{10}\numin^{-1} \right) -  4\cinvG \dfrak \varepsilon^{-1}\\
& \ge2 c_{\Vz}/3 -4\cinvG \dfrak \varepsilon^{-1} = c_{\Vz}/3;
\end{split}
\]
this term is positive as well;
\item[*] $\left( \frac{3}{10} c_{\Wz}- \varepsilon \right)>c_{\Wz}/5$.
\end{itemize}
The assertion follows.
\end{proof}

\section{Continuity of~$\Th( (\cdot,\cdot,\cdot), (\cdot,\cdot,\cdot))$}\label{section:continuity}
In this section, we prove the continuity of~$\Th((\cdot,\cdot,\cdot), (\cdot,\cdot,\cdot))$. To that end, we introduce the two following energy norms, which extend the $\DG$ and $\DGp$ norms to the \DGFEMBEM{} coupling:
\begin{align*}
\energynorm{(u,\m,\uext)}^2   &:= \|{u}\|_{\DG}^2 + \|\m\|_{-\frac{1}{2},\Gamma}^2 + \|\uext\|_{\frac{1}{2},\Gamma}^2, \\
\energynormp{(u,\m,\uext)}^2 &:= \|{u}\|_{\DGp}^2 + \|\m\|_{-\frac{1}{2},\Gamma}^2 + \| \msf^{1/2}\p^{-1} \ \m\Vert_{0,\Gamma}^2 + \|\uext\|_{\frac{1}{2},\Gamma}^2.
\end{align*}

\begin{prop} \label{proposition:continuity:trisesquilinear-form}
For all $(u,\m,\uext), (v,\lambda,\vext) \in \Hpw^{\frac{3}{2} +\tepsilon} (\Omegah)\times  L^2(\Gamma) \times H^{\frac{1}{2}}(\Gamma)$ for some regularity parameter $\tepsilon>0$, the following continuity bound is valid:
\begin{align} \label{eq:continuity}
\big|\Th ( (u,\m,\uext) , (v,\lambda, \vext) )\big| 
  &\lesssim \energynormp{(u,\m,\uext)}  \energynormp{(v,\lambda,\vext)} ,
\end{align}
where the hidden constant depends on $\k$.
If $(u,\m,\uext)$ or $(v,\lambda,\vext)$ is in $\Vh \times \Wh \times \Zh$,
then we can replace the corresponding $\energynormp{\cdot}$ norm in~\eqref{eq:continuity} with $\energynorm{\cdot}$.
\end{prop}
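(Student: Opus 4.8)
The plan is to split $\Th$ according to the five groups of terms in its definition~\eqref{trisesquilinear-form}: the interior part $\sum_{\E\in\taun}\ahE(u,v)+\bhGamma(u,v)$, the two groups of boundary-integral-operator terms (tested against $\vext$ and against $\lambda$, respectively), and the two coupling groups that pair the mortar variables $\m,\lambda$ against traces of the volume functions $v,u$ on $\Gamma$. I would estimate each group separately and collect the bounds at the end, so that the only genuinely new work concerns the coupling groups.

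For the interior contribution I would invoke directly the continuity bound~\eqref{continuity:abplusplus} of Proposition~\ref{proposition:continuity-coercivity:dG}, which already gives $|\sum_{\E}\ahE(u,v)+\bhGamma(u,v)|\le \cc\,\|u\|_{\DGp}\|v\|_{\DGp}$. For the boundary-integral-operator groups I would use the mapping properties of $\Vk,\Kk,\Kprimek,\Wk$ recalled in Section~\ref{subsection:boundary-integral-operators}: every term has the form $\langle Lw,z\rangle$ with $L$ one of the four operators (or a $\k$-scaled combination thereof), so Cauchy--Schwarz in the appropriate dual pair together with boundedness of $L$ bounds this contribution by $(\|\uext\|_{\frac12,\Gamma}+\|\m\|_{-\frac12,\Gamma})(\|\vext\|_{\frac12,\Gamma}+\|\lambda\|_{-\frac12,\Gamma})$ with a $\k$-dependent constant, which is controlled by the two energy norms. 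This part is routine.

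The heart of the argument is the coupling groups $-(\m,\delta(\i\k)^{-1}\nablah v\cdot\nGamma+(1-\delta)v)_{0,\Gamma}$ and $\langle-\delta(\i\k)^{-1}\nablah u\cdot\nGamma+(1-\delta)u+\delta(\i\k)^{-1}\m,\lambda\rangle$. The terms carrying the weight $\delta$ — the normal-derivative terms, the $\delta(\i\k)^{-1}\m$ term, and, after writing $1-\delta=1-\delta$, the $\delta$-parts of the value terms — can be handled by a weighted Cauchy--Schwarz that matches the $\k^{-1}\|\delta^{1/2}\nablah\cdot\nGamma\|_{0,\Gamma}^2$ and $\|(1-\delta)^{1/2}\cdot\|_{0,\Gamma}^2$ pieces of the $\DG$-norm against the $\|\msf^{1/2}\p^{-1}\cdot\|_{0,\Gamma}^2$ pieces of $\energynormp{\cdot}$, using $\delta=\dfrak\,\k\msf/\p^2$ from~\eqref{dG-parameters}. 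The genuinely delicate terms are the \emph{value} pairings $(\m,v)_{0,\Gamma}$ and $\langle u,\lambda\rangle$: here the trace of the piecewise-smooth volume function is only broken and cannot be placed in $H^{\frac12}(\Gamma)$ directly. This is the main obstacle, and I would resolve it exactly as in the bound for $T_{10}$ in the proof of Theorem~\ref{theorem:discrete-Garding}, by inserting the reconstruction operator $\Pkar$ of Theorem~\ref{lemma:Karakashian-style} with $\ell=\p$ and writing $\langle u,\lambda\rangle=\langle\Pkar u,\lambda\rangle+\langle(I-\Pkar)u,\lambda\rangle$. The first summand is bounded by $\|\lambda\|_{-\frac12,\Gamma}\,\|\Pkar u\|_{\frac12,\Gamma}$, and the trace theorem together with~\eqref{K-propertyA}--\eqref{K-propertyB} controls $\|\Pkar u\|_{\frac12,\Gamma}$ by $\|u\|_{\DG}$ up to $\k$-factors; the second summand is bounded by $\|\msf^{1/2}\p^{-1}\lambda\|_{0,\Gamma}\,\|\msf^{-1/2}\p(I-\Pkar)u\|_{0,\Gamma}$, and then by $\|u\|_{\DG}$ via~\eqref{K-propertyC}. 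It is precisely this second summand that forces the extra term $\|\msf^{1/2}\p^{-1}\m\|_{0,\Gamma}$ present in $\energynormp{\cdot}$ (and its $\lambda$-analogue).

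Finally, for the sharpened statement when one of the two triples is discrete, I would prove the $\energynormp$--$\energynormp$ estimate first and then use that $\energynormp{w_h}\lesssim\energynorm{w_h}$ for $w_h\in\Vh\times\Wh\times\Zh$. Indeed, for $v_h\in\Vh$ the extra $\DGp$-term $\k^{-1}\|\nu^{1/2}\alpha^{-1/2}\ldc\nablah v_h\rdc\|_{0,\FhI}^2$ is absorbed into $\|\nu^{1/2}\nablah v_h\|_{0,\Omega}^2$ by the trace inequality~\eqref{trace:inequality} and assumption~\eqref{condition:alpha} (the same computation as in~\eqref{new:bound1}), while for $\lambda_h\in\Wh$ the extra term $\|\msf^{1/2}\p^{-1}\lambda_h\|_{0,\Gamma}$ is controlled by $\|\lambda_h\|_{-\frac12,\Gamma}$ through the polynomial inverse inequality of~\cite[Lemma~A.1]{MR3667020} already used in~\eqref{T6} and~\eqref{T9}. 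Hence the $\energynormp$-norm of the discrete argument may be replaced by its $\energynorm$-norm, which yields the claim.
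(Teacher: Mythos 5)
Your proposal is correct and follows essentially the same route as the paper's own proof: term-by-term estimation using~\eqref{continuity:abplusplus} for the interior part, the mapping properties of the boundary integral operators for the BEM terms, weighted Cauchy--Schwarz with $\delta=\dfrak\,\k\msf/\p^2$ for the $\delta$-weighted coupling terms, the reconstruction operator $\Pkar$ of Theorem~\ref{lemma:Karakashian-style} with $\ell=\p$ (exactly as for $T_{10}$ in the G{\aa}rding proof) for the value pairings $\langle\m,v\rangle$ and $\langle u,\lambda\rangle$, and the polynomial inverse inequality of~\cite[Lemma~A.1]{MR3667020} to pass from $\energynormp{\cdot}$ to $\energynorm{\cdot}$ for discrete arguments. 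Your explicit absorption of the extra $\DGp$-term for $v_h\in\Vh$ via~\eqref{trace:inequality} and~\eqref{condition:alpha} is the same mechanism the paper packages into the mixed continuity bounds~\eqref{continuity:abdgplusdg}--\eqref{continuity:abdgdgplus}, so there is no substantive difference.
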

\begin{proof}
We present the estimates of the terms in the sesquilinear form $\Th ( (u,\m,\uext) , (v,\lambda, \vext) )$ defined in~\eqref{trisesquilinear-form} separately.

First, to estimate the term $\sum_{\E \in \taun} \ahE (u,v) + \bhGamma(u,v)$, we use~\eqref{continuity:abplusplus}.

For the terms involving the integral operators, 
we use the definitions of the combined integral operators in~\eqref{definition:Bk:and:Akprime}, and
the mapping properties described in Section~\ref{subsection:boundary-integral-operators}.
More precisely, we write
\begin{multline*}
  \big| - \langle (-\Wk -\i\k (\frac{1}{2}-\Kk) + \i\k (\frac{1}{2}+\Kprimek+\i\k\Vk))\uext - (\frac{1}{2} + \Kprimek + \i\k \Vk)\m, \vext \rangle\big|\\
  \begin{aligned}[t]
& \leq\Vert \Bk \uext \Vert_{-\frac{1}{2}, \Gamma} \Vert \vext\Vert_{\frac{1}{2}, \Gamma}+\k\Vert \Aprimek \uext \Vert_{-\frac{1}{2}, \Gamma} \Vert \vext\Vert_{\frac{1}{2}, \Gamma} + \Vert \Aprimek \m \Vert_{-\frac{1}{2}, \Gamma}\Vert \vext\Vert_{\frac{1}{2}, \Gamma}\\
&\lesssim \Vert \uext\Vert_{\frac{1}{2}, \Gamma}\Vert \vext\Vert_{\frac{1}{2}, \Gamma}
+\Vert \m\Vert_{-\frac{1}{2}, \Gamma}\Vert \vext\Vert_{\frac{1}{2}, \Gamma},
\end{aligned}
\end{multline*}
where we have used $\Vert \uext\Vert_{-\frac{1}{2}, \Gamma}\leq \Vert \uext\Vert_{\frac{1}{2}, \Gamma}$ and
\begin{align*}
\big|- \langle (\frac{1}{2} +\Kk) \uext - \Vk (\m - \i\k \uext) , \lambda   \rangle\big|
&\leq \Vert (\frac{1}{2} +\Kk) \uext - \Vk (\m - \i\k \uext)\Vert_{\frac{1}{2},\Gamma} \Vert\lambda\Vert_{-\frac{1}{2}, \Gamma}\\
&\lesssim \big(\Vert \uext\Vert_{\frac{1}{2}, \Gamma}+\Vert \m\Vert_{-\frac{1}{2}, \Gamma}\big)\Vert\lambda\Vert_{-\frac{1}{2},\Gamma}.
\end{align*}

Next, we focus on the coupling terms. Several of the following estimates are already established in the proof of 
Theorem~\ref{theorem:discrete-Garding}.
However, we cannot use the polynomial inverse inequality here.
With the Cauchy-Schwarz inequality and the definition of $\delta$ in~\eqref{dG-parameters}, we get
\begin{align*}
\big|  (\m, \delta (\i\k)^{-1} \nablah v \cdot \nGamma)_{0,\Gamma} \big|
& \le \k^{-1} \Vert \delta^{1/2}\m\Vert_{0,\Gamma} \Vert \delta^{1/2}\nablah v \cdot \nGamma\Vert_{0,\Gamma} \\
  &\leq \dfrak^{1/2}  \Vert \msf^{1/2}\p^{-1}\ \m\Vert_{0,\Gamma} \k^{-1/2} \Vert \delta^{1/2}\nablah v \cdot \nGamma\Vert_{0,\Gamma}
\end{align*}
The next coupling term is dealt with as follows:
\begin{align*}
 \big|\langle -\delta (\i\k)^{-1} \nablah u \cdot \nGamma, \lambda \rangle\big|
 &\leq \k^{-1}\Vert \delta^{1/2}\nablah u \cdot
   \nGamma\Vert_{0,\Gamma} \Vert \delta^{1/2}\lambda\Vert_{0,\Gamma}\\
  &\leq  \k^{-1/2}\Vert \delta^{1/2}\nablah u \cdot \nGamma\Vert_{0,\Gamma}\dfrak^{1/2}\Vert  \msf^{1/2}\p^{-1}\ \lambda\Vert_{0,\Gamma}.
\end{align*}
Furthermore, we get
\begin{align*}
 \big|\langle \delta (\i\k)^{-1} \m, \lambda \rangle\big|
 &\leq \k^{-1} \Vert \delta^{1/2}m\Vert_{0,\Gamma} \Vert \delta^{1/2}\lambda\Vert_{0,\Gamma} \leq \dfrak^{1/2} \Vert  \msf^{1/2}\p^{-1}\ \m\Vert_{0,\Gamma}\dfrak^{1/2} \Vert  \msf^{1/2}\p^{-1}\ \lambda\Vert_{0,\Gamma}.
\end{align*}
As for the two remaining coupling terms, we employ the reconstruction
operator $\Pkar:\Hpw^1(\Omegah)\to H^1(\Omega)$ introduced in Theorem~\ref{lemma:Karakashian-style}, and write
\begin{align*}
\big|- \langle \m, (1-\delta)v \rangle  \big|
&\lesssim \vert \langle \m, \Pkar v \rangle \vert + \vert \langle \m, (1-\Pkar) v \rangle \vert \\
& \le \Vert \m \Vert_{-\frac{1}{2}, \Gamma} \Vert \Pkar v \Vert_{\frac{1}{2}, \Gamma} + \Vert  \msf^{1/2} \p^{-1}\m \Vert_{0,\Gamma} \Vert \msf^{-1/2}\p  (I-\Pkar) v \Vert_{0,\Gamma}.
\end{align*}
Properties~\eqref{K-propertyA}--\eqref{K-propertyC} with $\ell=p$, 
$\msf p^{-2}\leq 1$, and the definition of $\alpha$ in~\eqref{dG-parameters} lead to
 \begin{align*}
\big|  \langle\m,
& (1-\delta)v \rangle \big| \lesssim  \Vert \m\Vert_{-\frac{1}{2},\Gamma} (\Vert \nablah v\Vert_{0,\Omega}   +\Vert\msf^{-1/2}p\llbracket v\rrbracket \Vert_{0,\FhI} +\Vert v\Vert_{0,\Omega})\\
&\quad +\Vert  \msf^{1/2} \p^{-1}\m \Vert_{0,\Gamma} (\Vert \nablah v\Vert_{0,\Omega} + \Vert\msf^{-1/2}p\llbracket v\rrbracket \Vert_{0,\FhI})\\
&\lesssim  \Vert \m\Vert_{-\frac{1}{2},\Gamma} (\numin^{-1/2}\Vert \nu^{1/2}\nablah v\Vert_{0,\Omega}
+ \numin^{-1/2}\k^{1/2}\Vert\nu^{1/2}\alpha^{1/2}\llbracket v \rrbracket \Vert_{0,\FhI}+ (k_0c_0)^{-1}\Vert \kn v\Vert_{0,\Omega}  )\\
&\quad +\Vert  \msf^{1/2} \p^{-1}\m \Vert_{0,\Gamma}( \numin^{-1/2}\Vert \nu^{1/2}\nablah v\Vert_{0,\Omega}    +\numin^{-1/2}\k^{1/2}\Vert\nu^{1/2}\alpha^{1/2}\llbracket v\rrbracket \Vert_{0,\FhI}),
 \end{align*}
where the last inequality follows from the bounds $\nu\ge \numin$ and~$|kn|\ge k_0c_0$.
The hidden constant depends additionally on~$\afrak^{-\frac{1}{2} }$. 

We proceed in the same way to estimate the term $ \big| \langle (1 - \delta) u, \lambda \rangle \big|$, and
the assertion follows combining the above bounds.

When dealing with discrete functions, estimate~\eqref{eq:continuity}
can be improved using the polynomial inverse inequality of~\cite[Lemma~A.1]{MR3667020}:
\[
\Vert  \msf^{1/2} \p^{-1}\lambda_h \Vert_{0,\Gamma}\lesssim \Vert\lambda_h \Vert_{-\frac{1}{2} ,\Gamma} \qquad \forall \lambda_h\in \Wh .
\]
Thus, we can replace~$\energynormp{\cdot}$ by~$\energynorm{\cdot}$ for discrete functions.
\end{proof}

\section{Adjoint problem}\label{section:adjoint}

In this section, we introduce and analyze the adjoint problem of~\eqref{dGBEM:long-version}.

The dual problem to~\eqref{weak:formulation:mortar:Helmholtz} is:
given~$(\rr, \rm, \rext )\in L^2(\Omega) \times H^{-\frac{3}{2}}(\Gamma) \times H^{-\frac{1}{2}}(\Gamma)$,
\begin{equation} \label{dual:problem:2}
\begin{cases}
\text{find } (\psi, \psim, \psitilde) \in H^1(\Omega)\times H^{-\frac{1}{2}}(\Gamma) \times H^{\frac{1}{2}}(\Gamma) \text{ such that} \\
(\Ad\nabla v, \nabla \psi )_{0,\Omega} -  ((\kn)^2 \, v, \psi)_{0,\Omega} + \i\k( v, \psi)_{0,\Gamma} - \langle \lambda, \psi \rangle \\
\quad -\langle  ( \Bk + i \k \Aprimek) \vtilde -  \Aprimek \lambda, \psitilde   \rangle + \overline{ \langle \psim, v \rangle} - \overline{\langle \psim, (\frac{1}{2} + \Kk) \vtilde - \Vk (\lambda - i\k \vtilde)   \rangle} \\
=  \left ((w,\rr)_{0,\Omega} +  (\xi, \rm)_{-\frac{3}{2},\Gamma} +  (\wext, \rext)_{-\frac{1}{2},\Gamma} \right)\\
\quad \quad \quad\forall (v,\lambda,\vtilde) \in H^1(\Omega)\times H^{-\frac{1}{2}}(\Gamma) \times H^{\frac{1}{2}}(\Gamma).\\
\end{cases}
\end{equation}

We recall some technical results from~\cite{FEMBEM:mortar}.

\begin{lem} (\cite[Lemma~{3.6}]{FEMBEM:mortar}) \label{lemma:devil}
The following identities are valid: For all $\varphi\in H^{-\frac{1}{2}}(\Gamma)$ and for all~$\psi \in H^{\frac{1}{2}}(\Gamma)$,
\begin{align}
& \Vk^* \varphi = \overline {\Vk \overline \varphi}, \quad \Kk^* \psi = \overline{\Kk' \overline \psi}, \label{adjoint:of:blo:1}\\
& (\Kk')^* \varphi = \overline{\Kk \overline \varphi}, \quad \Wk^* \psi= \overline {\Wk \overline \psi}, \label{adjoint:of:blo:2}
\end{align}
where we recall that $\cdot^*$ denotes the adjoint operator.
\end{lem}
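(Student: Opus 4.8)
The plan is to reduce all four sesquilinear adjoint identities to the \emph{bilinear} symmetry relations of the boundary integral operators, the only genuinely analytic inputs being the symmetry of the fundamental solution, $\Gk(\xbold,\ybold)=\Gk(\ybold,\xbold)$, and the fact that the duality pairing $\langle\cdot,\cdot\rangle$ is antilinear in its second argument, so that on $L^2(\Gamma)$ one has $\langle\varphi,\psi\rangle=\int_\Gamma\varphi\,\overline{\psi}\,\ds$.

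First I would record the symmetry relations with respect to the \emph{bilinear} pairing $\int_\Gamma\varphi\,\psi\,\ds$. For smooth densities $\varphi,\psi$, Fubini applied to the single-layer kernel yields $\int_\Gamma(\Vk\varphi)\,\psi\,\ds=\int_\Gamma\varphi\,(\Vk\psi)\,\ds$, i.e. $\Vk$ is symmetric, precisely because $\Gk(\xbold,\ybold)=\Gk(\ybold,\xbold)$. The same computation shows that $\Kk$ and $\Kprimek$ are mutual transposes, $\int_\Gamma(\Kk\varphi)\,\psi\,\ds=\int_\Gamma\varphi\,(\Kprimek\psi)\,\ds$: the kernel of $\Kk$ is $\partial_{\nGamma(\ybold)}\Gk(\xbold,\ybold)$ and that of $\Kprimek$ is $\partial_{\nGamma(\xbold)}\Gk(\xbold,\ybold)$, and since $\Gk$ depends only on $|\xbold-\ybold|$ one has $\partial_{\nGamma(\ybold)}\Gk(\xbold,\ybold)=\partial_{\nGamma(\ybold)}\Gk(\ybold,\xbold)$, which is exactly the transpose of the $\Kprimek$-kernel. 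Finally, the hypersingular kernel $\partial_{\nGamma(\xbold)}\partial_{\nGamma(\ybold)}\Gk(\xbold,\ybold)$ is symmetric, so $\Wk$ is symmetric as well.

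The four claimed identities then follow by inserting a conjugation. For smooth $\varphi,\psi$,
\[
\langle\Vk\varphi,\psi\rangle=\int_\Gamma(\Vk\varphi)\,\overline{\psi}\,\ds=\int_\Gamma\varphi\,(\Vk\overline{\psi})\,\ds=\langle\varphi,\overline{\Vk\overline{\psi}}\rangle,
\]
whence $\Vk^*\psi=\overline{\Vk\overline{\psi}}$; the identity for $\Wk$ and the pair $\Kk$/$\Kprimek$ (i.e. $\Kk^*\psi=\overline{\Kprimek\overline{\psi}}$ and $(\Kprimek)^*\varphi=\overline{\Kk\overline{\varphi}}$) are obtained in the same way from the symmetry of $\Wk$ and the transpose relation between $\Kk$ and $\Kprimek$. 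To pass from smooth densities to all of $H^{\pm\frac12}(\Gamma)$ I would invoke the density of $C^\infty(\Gamma)$ in these spaces together with the continuity of the operators on the Sobolev scales recalled in Section~\ref{subsection:boundary-integral-operators} and the isometry of complex conjugation, since both sides of each identity are continuous in the relevant norms.

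I expect the only delicate point to be the rigorous justification of the Fubini/kernel step for the double-layer and, above all, the hypersingular operator, whose kernels are not absolutely integrable. This is handled either by first establishing the symmetry relations on smooth densities through the mapping properties of the layer potentials and Maue's integration-by-parts formula (which rewrites $\langle\Wk\varphi,\psi\rangle$ in terms of surface-gradient single-layer expressions with a manifestly symmetric kernel) and then extending by density as above, or simply by appealing to the classical symmetry identities of the Helmholtz boundary integral operators; see, e.g., \cite{mclean2000strongly,SauterSchwab_BEMbook,steinbach_BEMbook}. Everything else is a routine application of Fubini's theorem and the sesquilinearity of $\langle\cdot,\cdot\rangle$.
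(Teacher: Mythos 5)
Your proof is correct, and it is consistent with the paper's conventions (duality pairing antilinear in the second slot, adjoint defined by $\langle v, K^\ast w\rangle=\langle Kv,w\rangle$): reducing the four sesquilinear identities to the bilinear transposition relations --- $\Vk$ symmetric, $\Kk$ and $\Kprimek$ mutual transposes, $\Wk$ symmetric --- and then inserting a conjugation is precisely the standard mechanism, and your density extension via the mapping properties of Section~\ref{subsection:boundary-integral-operators} closes the argument on $H^{\pm\frac12}(\Gamma)$. Note that the paper itself gives no proof of this lemma; it is imported verbatim from \cite[Lemma~3.6]{FEMBEM:mortar}, so there is nothing internal to compare against, but your route is the classical one underlying that citation. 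You also correctly isolate the only genuinely delicate point: Fubini is legitimate for the weakly singular kernels of $\Vk$, $\Kk$, $\Kprimek$ on a smooth $\Gamma$, whereas the symmetry of $\Wk$ must instead come from Maue's integration-by-parts identity (or an appeal to the classical literature) before passing to the limit.
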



\begin{lem}(\cite[Lemma~{3.7}]{FEMBEM:mortar}) \label{lemma:Laplace:Beltrami}
Let $s \in {\mathbb R}^+$. 
Given~$\rm \in H^{s-\frac{3}{2}}(\Gamma)$ and~$\rext \in H^{s-\frac{1}{2}}(\Gamma)$,
there exist~$\Rm\in H^{s+\frac{3}{2}}(\Gamma)$ and~$\Rext \in H^{s+\frac{1}{2}} (\Gamma)$ such that
\[
\Vert \Rm \Vert_{s+\frac{3}{2}, \Gamma} = \Vert \rm \Vert_{s-\frac{3}{2}, \Gamma},\quad \Vert \Rext \Vert_{s+\frac{1}{2}, \Gamma} = \Vert \rext \Vert_{s-\frac{1}{2}, \Gamma}
\]
and
\begin{align}
\langle \xi, \Rm \rangle			& = (\xi, \rm)_{-\frac{3}{2}, \Gamma} 		&& \quad \forall \xi \in H^{-\frac{1}{2}}(\Gamma), \label{regularity:Melenk:trick} \\
(\wext , \Rext ) _{0,\Gamma}	& = (\wext,\rext)_{-\frac{1}{2}, \Gamma} 	&& \quad \forall \wext \in L^2(\Gamma).\notag
\end{align}
\end{lem}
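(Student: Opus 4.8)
The plan is to use the spectral characterization of the Sobolev spaces on~$\Gamma$ through the Laplace--Beltrami eigenbasis $\{\varphi_n,\lambda_n\}$ introduced in the Notation, and to build the two lifts as suitable Bessel-type potentials. Expanding $\rm=\sum_n(\rm)_n\varphi_n$ and $\rext=\sum_n(\rext)_n\varphi_n$ in this basis, I would define
\[
\Rm:=\sum_{n}(\rm)_n(1+\lambda_n)^{-3/2}\varphi_n,\qquad
\Rext:=\sum_{n}(\rext)_n(1+\lambda_n)^{-1/2}\varphi_n,
\]
that is, $\Rm$ and~$\Rext$ are obtained from~$\rm$ and~$\rext$ by applying the spectral multipliers $(1+\lambda_n)^{-3/2}$ and $(1+\lambda_n)^{-1/2}$, respectively, in the functional calculus of the self-adjoint operator whose eigenvalues are the $1+\lambda_n$.

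I would then verify the two duality identities by matching Fourier coefficients. Using the forms $\langle\xi,\Rm\rangle=\sum_n\xi_n\overline{(\Rm)_n}$ and $(\xi,\rm)_{-\frac32,\Gamma}=\sum_n\xi_n\overline{(\rm)_n}(1+\lambda_n)^{-3/2}$ recorded in the Notation, the coefficient choice $(\Rm)_n=(\rm)_n(1+\lambda_n)^{-3/2}$ makes the two (absolutely convergent) series agree term by term, so~\eqref{regularity:Melenk:trick} holds for every $\xi\in H^{-\frac12}(\Gamma)$. In the same way, comparing $(\wext,\Rext)_{0,\Gamma}=\sum_n(\wext)_n\overline{(\Rext)_n}$ with $(\wext,\rext)_{-\frac12,\Gamma}=\sum_n(\wext)_n\overline{(\rext)_n}(1+\lambda_n)^{-1/2}$ shows that the choice $(\Rext)_n=(\rext)_n(1+\lambda_n)^{-1/2}$ delivers the second identity for every $\wext\in L^2(\Gamma)$.

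The norm identities then follow from a direct weighted computation: since $|(\Rm)_n|^2=|(\rm)_n|^2(1+\lambda_n)^{-3}$,
\[
\|\Rm\|_{s+\frac32,\Gamma}^2=\sum_n|(\rm)_n|^2(1+\lambda_n)^{-3}(1+\lambda_n)^{s+\frac32}=\sum_n|(\rm)_n|^2(1+\lambda_n)^{s-\frac32}=\|\rm\|_{s-\frac32,\Gamma}^2,
\]
and analogously $\|\Rext\|_{s+\frac12,\Gamma}^2=\|\rext\|_{s-\frac12,\Gamma}^2$, using the exponent $-1$ in place of $-3$. These computations also show $\Rm\in H^{s+\frac32}(\Gamma)$ and $\Rext\in H^{s+\frac12}(\Gamma)$, so the defining series converge in the asserted spaces. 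The one point requiring care, rather than the algebra, is the well-posedness of the mixed-order pairing $(\xi,\rm)_{-\frac32,\Gamma}$ for $\xi\in H^{-\frac12}(\Gamma)$ and $\rm\in H^{s-\frac32}(\Gamma)$: splitting the weight as $(1+\lambda_n)^{-3/2}=(1+\lambda_n)^{-1/4}(1+\lambda_n)^{-5/4}$ and applying the Cauchy--Schwarz inequality bounds the series by $\|\xi\|_{-\frac12,\Gamma}$ times a factor controlled by $\|\rm\|_{s-\frac32,\Gamma}$, which holds since $s>0$. This guarantees absolute convergence and hence that the termwise matching of coefficients legitimately determines the full identities. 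Everything else is routine manipulation in the eigenbasis.
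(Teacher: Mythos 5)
Your proof is correct and is essentially the argument behind the cited result \cite[Lemma~{3.7}]{FEMBEM:mortar} (which this paper invokes without reproducing): the lifts are exactly the spectral shifts $\Rm=\sum_n (\rm)_n(1+\lambda_n)^{-3/2}\varphi_n$ and $\Rext=\sum_n(\rext)_n(1+\lambda_n)^{-1/2}\varphi_n$ in the Laplace--Beltrami eigenbasis set up in the Notation, so the norm equalities and the duality identities hold by construction, coefficient by coefficient. Your Cauchy--Schwarz splitting of the weight to justify absolute convergence of the mixed-order pairings (valid since $s>0$) is the right care point and closes the argument.
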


Indeed, the global problem~\eqref{dual:problem:2} can be split into three problems as detailed in the following result.
\begin{lem}(\cite[Lemma~{3.8}]{FEMBEM:mortar}) \label{lemma:rewriting:dual:problem:strong:formulation}
Let $(\rr, \rm, \rext )\in L^2(\Omega) \times H^{-\frac{3}{2}}(\Gamma) \times H^{-\frac{1}{2}}(\Gamma)$ and~$\Rm$ and~$\Rext$ be the representers of~$\rm$ and~$\rext$ constructed 
in Lemma~\ref{lemma:Laplace:Beltrami}.  
Then, problem~\eqref{dual:problem:2} is equivalent to the variational
formulation of the following three coupled problems:
Find~$(\psi, \psim, \psitilde) \in H^1(\Omega) \times H^{-\frac{1}{2}}(\Gamma) \times H^{\frac{1}{2}}(\Gamma)$ such that
\begin{align}
&\begin{cases}
-\div(\Ad \nabla\overline  \psi) - (\kn)^2 \overline \psi =  \overline r 				& \text{in }\Omega, \\
\nabla \overline\psi \cdot \nGamma + i\k \overline\psi + \overline{\psim} = 0 	& \text{on }\Gamma,\\
\end{cases}\label{dual:problem:first:equation:strong}\\
& \begin{cases}  -\overline \psi + (\frac{1}{2} + \Kk +i\k \Vk) \overline{\psitilde} + \Vk \overline{\psim} = \overline \Rm \qquad\text{on }\Gamma,
\end{cases}\label{dual:problem:second:equation:strong} \\
& \begin{cases} (\Wk + i\k (\frac{1}{2} - \Kprimek) - i\k (\frac{1}{2} + \Kk + i\k\Vk)) \overline{\psitilde} - ( (\frac{1}{2} + \Kprimek) + i \k \Vk)  \overline{\psim} = \overline{\Rext} \qquad\text{on }\Gamma.
\end{cases}\label{dual:problem:third:equation:strong:3}
\end{align}
\end{lem}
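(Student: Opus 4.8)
The plan is to prove the two implications behind the claimed equivalence by testing the single variational identity in~\eqref{dual:problem:2} against structured choices of $(v,\lambda,\vtilde)$, thereby decoupling it into the three problems~\eqref{dual:problem:first:equation:strong}--\eqref{dual:problem:third:equation:strong:3}. Two ingredients drive the argument: the conjugation/adjoint identities of Lemma~\ref{lemma:devil}, which let me move each boundary integral operator from the test function onto the unknown at the cost of a complex conjugation, and the representers $\Rm$, $\Rext$ of Lemma~\ref{lemma:Laplace:Beltrami}, which convert the negative-order inner products $(\cdot,\rm)_{-\frac32,\Gamma}$ and $(\cdot,\rext)_{-\frac12,\Gamma}$ on the right-hand side into the ordinary duality pairings $\langle\cdot,\Rm\rangle$ and $\langle\cdot,\Rext\rangle$ via~\eqref{regularity:Melenk:trick}.

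First I would recover~\eqref{dual:problem:first:equation:strong}. Restricting~\eqref{dual:problem:2} to test functions with $\lambda=0$ and $\vtilde=0$ leaves only the volume terms together with $\i\k(v,\psi)_{0,\Gamma}$ and $\overline{\langle\psim,v\rangle}$, while the right-hand side collapses to $(v,\rr)_{0,\Omega}$. Testing with $v\in C_0^\infty(\Omega)$ removes all boundary contributions and, after conjugation, yields the interior equation $-\div(\Ad\nabla\overline\psi)-(\kn)^2\overline\psi=\overline{r}$. Then, using general $v\in H^1(\Omega)$ and integrating by parts, the boundary terms collect into $\nabla\overline\psi\cdot\nGamma+\i\k\overline\psi+\overline{\psim}$ paired against $v$ on $\Gamma$, whence the Robin condition in~\eqref{dual:problem:first:equation:strong} follows. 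Care is needed with the sesquilinear convention, since the target is stated for the conjugated unknowns.

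Next I would isolate~\eqref{dual:problem:second:equation:strong} by setting $v=0$, $\vtilde=0$ with $\lambda$ arbitrary, and~\eqref{dual:problem:third:equation:strong:3} by setting $v=0$, $\lambda=0$ with $\vtilde$ arbitrary. In the first case the identity reduces to $-\langle\lambda,\psi\rangle+\langle\Aprimek\lambda,\psitilde\rangle+\overline{\langle\psim,\Vk\lambda\rangle}=\langle\lambda,\Rm\rangle$; moving $\Aprimek$ and $\Vk$ onto the unknowns through $\langle Kv,w\rangle=\langle v,K^\ast w\rangle$ and the identities of Lemma~\ref{lemma:devil} (so that e.g. $\Aprimek^\ast\psitilde=\overline{(\tfrac12+\Kk+\i\k\Vk)\overline{\psitilde}}$ and $\Vk^\ast\psim=\overline{\Vk\overline{\psim}}$), then stripping the arbitrary $\lambda$ by nondegeneracy of the pairing and conjugating, gives exactly~\eqref{dual:problem:second:equation:strong}. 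The second case is analogous: after transferring the combined operator $(\Bk+\i\k\Aprimek)^\ast$ and $(\tfrac12+\Kk+\i\k\Vk)^\ast$ onto $\psitilde$ and $\psim$, removing $\vtilde$, and conjugating, the surviving operator combination matches~\eqref{dual:problem:third:equation:strong:3} term by term.

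The main obstacle is the bookkeeping of complex conjugates. Because the duality pairing is sesquilinear and Lemma~\ref{lemma:devil} expresses each adjoint as $K^\ast\varphi=\overline{K'\overline\varphi}$, every operator transfer introduces a conjugation and a swap between $\Kk$ and $\Kprimek$; the delicate point is to verify that, after assembling $\Bk+\i\k\Aprimek$ together with its adjoint and taking the final conjugate, the coefficients of $\Wk$, $\Kk$, $\Kprimek$, and $\Vk$ line up precisely with~\eqref{dual:problem:third:equation:strong:3} and that no stray sign survives. Once this forward direction is established, the converse is routine: summing the three strong/variational identities tested against the three components reconstructs~\eqref{dual:problem:2}, so the equivalence holds.
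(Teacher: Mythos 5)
Your proof is correct, and there is nothing in the paper to compare it against: the lemma is imported verbatim from \cite[Lemma~3.8]{FEMBEM:mortar} without an in-paper proof, and your argument is precisely the natural one. Since the left-hand side of~\eqref{dual:problem:2} splits additively into groups involving only $v$, only $\lambda$, and only $\vtilde$, testing with $(v,0,0)$, $(0,\lambda,0)$, $(0,0,\vtilde)$ is a legitimate decoupling, and the conjugation bookkeeping you flagged as delicate does check out: using $(\i\k K)^*=-\i\k K^*$ and Lemma~\ref{lemma:devil} one gets $(\Aprimek)^*\psitilde=\overline{(\tfrac12+\Kk+\i\k\Vk)\overline{\psitilde}}$, $\Bk^*\psitilde=\overline{\bigl(-\Wk-\i\k(\tfrac12-\Kprimek)\bigr)\overline{\psitilde}}$, and $(\tfrac12+\Kk+\i\k\Vk)^*\psim=\overline{(\tfrac12+\Kprimek+\i\k\Vk)\overline{\psim}}$, which after the final conjugation and nondegeneracy of the $H^{-\frac12}(\Gamma)\times H^{\frac12}(\Gamma)$ pairing reproduce \eqref{dual:problem:second:equation:strong}--\eqref{dual:problem:third:equation:strong:3} exactly, with the $\Kk\leftrightarrow\Kprimek$ swaps landing in the right places. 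The only point worth stating explicitly is that the Robin condition in~\eqref{dual:problem:first:equation:strong} is recovered in the weak sense (the interior equation gives $\div(\Ad\nabla\psi)\in L^2(\Omega)$, hence a normal trace in $H^{-\frac12}(\Gamma)$, and one uses density of $H^1(\Omega)$-traces), which is consistent with the lemma asserting equivalence with the \emph{variational} formulation of the three problems.
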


Well posedness as well as regularity results for problem~\eqref{dual:problem:2} are given in the following theorem. 
\begin{thm}(\cite[Thm.~{3.12}]{FEMBEM:mortar}) \label{theorem:regularity:duality}
Given~$s\in \mathbb R^+_{0}$ and
\[
\rr \in H^{s}(\Omega),\quad\quad \rm \in H^{s - \frac{3}{2}}(\Gamma), \quad\quad \rext \in H^{s-\frac{1}{2}}(\Gamma),
\]
let~$(\psi,\psim, \psitilde)$ be the solution to~\eqref{dual:problem:first:equation:strong}--\eqref{dual:problem:third:equation:strong:3}.
Then, $(\psi,\psim, \psitilde)$ satisfies
\[
\psi \in H^{s+2}(\Omega), \quad\quad \psim \in H^{s+\frac{1}{2}}(\Gamma), \quad\quad \psitilde \in H^{s+\frac{3}{2}}(\Gamma),
\]
together with the {\sl a priori} estimates
\begin{equation}\label{bounds:norms:solutions:dual}
\begin{split}
& \Vert \psi \Vert_{s+2,\Omega} +\Vert \psim \Vert_{s+\frac{1}{2}, \Gamma} + \Vert \psitilde \Vert_{s+\frac{3}{2}, \Gamma} \lesssim_{\k} \left(\Vert \rr \Vert_{s,\Omega} + \Vert \rm \Vert_{s - \frac{3}{2}, \Gamma} + \Vert \rext \Vert_{s-\frac{1}{2}, \Gamma} \right).  \\
\end{split}
\end{equation}
\end{thm}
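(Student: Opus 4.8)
The plan is to reduce the weak dual problem \eqref{dual:problem:2} to the coupled strong system \eqref{dual:problem:first:equation:strong}--\eqref{dual:problem:third:equation:strong:3} furnished by Lemma~\ref{lemma:rewriting:dual:problem:strong:formulation}, and then to show that the operator defined by this system is an isomorphism between the shifted scales $H^{s+2}(\Omega)\times H^{s+\frac12}(\Gamma)\times H^{s+\frac32}(\Gamma)$ and the data space $H^s(\Omega)\times H^{s+\frac32}(\Gamma)\times H^{s+\frac12}(\Gamma)$. Two preliminaries are needed. First, at the energy level $H^1(\Omega)\times H^{-\frac12}(\Gamma)\times H^{\frac12}(\Gamma)$ the dual problem is uniquely solvable with an a priori bound: \eqref{dual:problem:2} is the adjoint of the primal form $\T$, which is Fredholm of index zero by the G{\aa}rding inequality of Theorem~\ref{theorem:Garding:inequality:cont} and injective by the standing uniqueness assumption, so its adjoint is bijective as well. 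Second, by Lemma~\ref{lemma:Laplace:Beltrami} the representers satisfy $\Rm\in H^{s+\frac32}(\Gamma)$, $\Rext\in H^{s+\frac12}(\Gamma)$ with $\|\Rm\|_{s+\frac32,\Gamma}=\|\rm\|_{s-\frac32,\Gamma}$ and $\|\Rext\|_{s+\frac12,\Gamma}=\|\rext\|_{s-\frac12,\Gamma}$, placing the right-hand sides of \eqref{dual:problem:second:equation:strong} and \eqref{dual:problem:third:equation:strong:3} at exactly the levels required for the shift.

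The heart of the argument is to eliminate the volume unknown and analyze the resulting boundary system. I would first solve the interior impedance problem \eqref{dual:problem:first:equation:strong}: for given $\psim$ and $r$, the problem $-\div(\Ad\nabla\overline\psi)-(\kn)^2\overline\psi=\overline r$ with $\nabla\overline\psi\cdot\nGamma+\i\k\overline\psi=-\overline{\psim}$ is well posed (the sign of the impedance term gives dissipativity), and, since $\Ad\equiv 1$ and $n\equiv 1$ near the smooth interface, elliptic regularity yields $\psi\in H^{s+2}(\Omega)$ whenever $r\in H^s(\Omega)$ and $\psim\in H^{s+\frac12}(\Gamma)$, with $\|\psi\|_{s+2,\Omega}\lesssim_\k \|r\|_{s,\Omega}+\|\psim\|_{s+\frac12,\Gamma}$. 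Writing $\gamma_0 S$ for the associated impedance-to-Dirichlet map $\psim\mapsto\gammazint\psi$, which is smoothing of order one on $\Gamma$, and substituting $\gammazint\overline\psi$ into \eqref{dual:problem:second:equation:strong}, the two surface equations \eqref{dual:problem:second:equation:strong}--\eqref{dual:problem:third:equation:strong:3} become a $2\times2$ boundary system for $(\psim,\psitilde)$ whose diagonal is, up to order-zero terms, the single-layer operator $\Vk$ (order $-1$) acting on $\psim$ and the hypersingular-type operator of \eqref{dual:problem:third:equation:strong:3} (order $+1$, dominated by $\Wk$) acting on $\psitilde$, while the couplings between the two unknowns are of order zero.

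This boundary system is elliptic in the Douglis--Nirenberg sense, with diagonal blocks of orders $-1$ and $+1$. To exploit this I would split each boundary integral operator into its zero-wavenumber part plus a remainder: by \eqref{compact:part} the differences $\Vk-\Vz$, $\Kk-\Kz$, $\Kprimek-\Kprimez$, $\Wk-\Wz$ all gain two extra orders of smoothness and are therefore lower-order (compact) perturbations, so the principal symbol of the system is carried by $\Vz$ and $\Wz$ alone; these are coercive by \eqref{coercivity:operators:Laplace}, which makes the principal part a positive, invertible operator on the smooth compact manifold $\Gamma$. Consequently the boundary operator is Fredholm of index zero between $H^{s+\frac12}(\Gamma)\times H^{s+\frac32}(\Gamma)$ and $H^{s+\frac32}(\Gamma)\times H^{s+\frac12}(\Gamma)$ for every $s\ge 0$, and it is injective because any solution of the homogeneous shifted problem is in particular an energy-space solution and hence vanishes by the base well-posedness above; thus it is an isomorphism. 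Reading off the shifted data $\Rm,\Rext$ and the volume contribution through $\gamma_0 S$ then gives $\psim\in H^{s+\frac12}(\Gamma)$ and $\psitilde\in H^{s+\frac32}(\Gamma)$; feeding $\psim$ back into the interior estimate gives $\psi\in H^{s+2}(\Omega)$. Chaining the three bounds and using $\|\Rm\|_{s+\frac32,\Gamma}=\|\rm\|_{s-\frac32,\Gamma}$ and $\|\Rext\|_{s+\frac12,\Gamma}=\|\rext\|_{s-\frac12,\Gamma}$ produces \eqref{bounds:norms:solutions:dual}, the $\k$-dependence entering through the continuity constants of the wavenumber-dependent operators and of the impedance solution operator.

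The main obstacle I expect is the boundary step: verifying that the reduced $2\times2$ surface system is genuinely elliptic of the stated Douglis--Nirenberg orders and that its principal part is exactly the coercive pair $(\Vz,\Wz)$, so that the estimates \eqref{compact:part} really do demote all wavenumber-dependent and cross-coupling contributions to lower-order perturbations; this is what upgrades the base well-posedness to an isomorphism on every shifted scale. A secondary point to handle carefully is the interior regularity $\psi\in H^{s+2}(\Omega)$ for large $s$: near $\Gamma$ the coefficients equal $1$, so the boundary-driven regularity is clean, but in the bulk the estimate relies on the smoothness of $\Ad$ and on enough regularity of $n$ for the zeroth-order term $(\kn)^2\psi$ to be bootstrapped, which is where any additional smoothness hypotheses on the coefficients and data must be invoked.
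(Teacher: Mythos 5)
You should first be aware that this paper contains no proof of Theorem~\ref{theorem:regularity:duality} at all: the statement is imported verbatim from \cite[Thm.~3.12]{FEMBEM:mortar}, so your attempt can only be compared with the cited argument. Your ingredients are the right ones --- the representers from Lemma~\ref{lemma:Laplace:Beltrami}, the strong reformulation of Lemma~\ref{lemma:rewriting:dual:problem:strong:formulation}, the smoothing of the differences~\eqref{compact:part}, the coercivity~\eqref{coercivity:operators:Laplace}, and interior elliptic regularity for the impedance problem --- but your organization is genuinely different. The reference proceeds by a bootstrap: the combined-field operators $\frac12+\Kk+\i\k\Vk$ and $\Aprimek=\frac12+\Kprimek+\i\k\Vk$ are invertible on the whole Sobolev scale over the smooth surface~$\Gamma$ (the classical combined-field mechanism that rules out spurious resonances), so one inverts them in~\eqref{dual:problem:second:equation:strong}--\eqref{dual:problem:third:equation:strong:3} to lift the regularity of $\psitilde$ and $\psim$ step by step, feeding $\psim$ back into the interior impedance problem for $\psi$. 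You instead eliminate $\psi$ through the impedance-to-Dirichlet solution map and claim the reduced $2\times 2$ boundary system is an isomorphism on every shifted scale by Douglis--Nirenberg ellipticity plus injectivity at the energy level. Your route delivers all the a priori bounds in one stroke; the bootstrap needs no symbol calculus and only the mapping properties already collected in Section~\ref{subsection:boundary-integral-operators}.

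The step you flag as ``the main obstacle'' is, however, misstated as written, and it is exactly where the work lies. The impedance-to-Dirichlet map $\gamma_0 S$ is itself of order $-1$ on $\Gamma$ (it maps $H^{s+\frac12}(\Gamma)$ to $H^{s+\frac32}(\Gamma)$), so it enters the $(1,1)$ slot of your system at the \emph{same} principal level as $\Vz$: the principal part is not ``exactly the coercive pair $(\Vz,\Wz)$''. Moreover, with the Douglis--Nirenberg weights that make the diagonal entries (orders $-1$ and $+1$) principal, the order-zero couplings $\pm\bigl(\frac12+\Kk+\i\k\Vk\bigr)$ are \emph{also} principal, not negligible lower-order terms. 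The argument can be repaired, but only after an explicit computation: the interior Dirichlet-to-Neumann symbol is $+|\xi|$, so $\gamma_0 S$ has symbol $-1/|\xi|$ and the $(1,1)$ symbol becomes $\frac{1}{2|\xi|}+\frac{1}{|\xi|}>0$, while the off-diagonal constants $\pm\frac12$ contribute $+\frac14$ to the DN determinant, giving $\frac34+\frac14=1\neq 0$; the favorable signs are a fact to be verified, not an automatic consequence of~\eqref{compact:part}. Your secondary caveat is also real and worth keeping: under this paper's standing assumption $n\in L^\infty$, the bulk bootstrap to $\psi\in H^{s+2}(\Omega)$ fails for $s>0$, so the theorem implicitly inherits the smoother-coefficient hypotheses of the setting in \cite{FEMBEM:mortar}; only the case $s=0$ is unconditional here.
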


In the next proposition, we prove that the adjoint formulation
of~\eqref{dGBEM:long-version} is in fact an approximation of the
adjoint problem~\eqref{dual:problem:2}, i.e., of the coupled problems~\eqref{dual:problem:first:equation:strong}--\eqref{dual:problem:third:equation:strong:3}.
\begin{prop}[adjoint consistency]\label{lemma:adjointconsistency}
Let the right-hand side  $(\rr, \rm, \rext )$ of~\eqref{dual:problem:2} belong to
$L^2(\Omega) \times H^{-\frac{3}{2}}(\Gamma) \times H^{-\frac{1}{2}}(\Gamma)$.
Then, the solution $(\psi, \psim, \psitilde)$ of~\eqref{dual:problem:2} belongs to $H^{2}(\Omega)\times  H^{\frac{1}{2}}(\Gamma) \times H^{\frac{3}{2}}(\Gamma)$ and satisfies
\begin{align}  \label{eq:adjointconsistency}
  \begin{split}
  &\Th ( (w,\xi,\wext) , (\psi,\psim,\psitilde))=(w,\rr)+(\xi,\rm)_{-\frac{3}{2},\Gamma}+(\wext,\rext)_{-\frac{1}{2},\Gamma} 
  \end{split}
\end{align}
for all $(w,\xi,\wext)\in \Hpw^{\frac{3}{2} +\tepsilon}(\Omegah)\times  L^2(\Gamma) \times
 H^{\frac{1}{2}}(\Gamma)$, for some $\tepsilon>0$.
\end{prop}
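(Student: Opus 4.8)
The plan is to first read off the regularity statement for free, and then to verify the identity~\eqref{eq:adjointconsistency} by expanding $\Th((w,\xi,\wext),(\psi,\psim,\psitilde))$ and sorting the resulting terms according to their dependence on the three components $w$, $\xi$, $\wext$ of the first slot, matching each group against the corresponding piece $(w,\rr)_{0,\Omega}$, $(\xi,\rm)_{-\frac32,\Gamma}$, $(\wext,\rext)_{-\frac12,\Gamma}$ of the right-hand side. The regularity $(\psi,\psim,\psitilde)\in H^2(\Omega)\times H^{\frac12}(\Gamma)\times H^{\frac32}(\Gamma)$ follows at once from Lemma~\ref{lemma:rewriting:dual:problem:strong:formulation} (which recasts~\eqref{dual:problem:2} as the strong system~\eqref{dual:problem:first:equation:strong}--\eqref{dual:problem:third:equation:strong:3}) together with Theorem~\ref{theorem:regularity:duality} applied with $s=0$. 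The gained $H^2$-regularity is exactly what makes the following computation legitimate: since $\psi\in H^1(\Omega)$ globally, all jumps $\llbracket\psi\rrbracket$ and $\llbracket\nabla\psi\rrbracket$ across interior faces vanish, and the interior normal derivative of $\psi$ is single-valued.

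For the $w$-group I would integrate by parts elementwise in the volume term $\int_\E\Ad\nabla w\cdot\overline{\nabla\psi}$ of $\sum_{\E}\ahE(w,\psi)$. Because $\psi$ has no jumps, every penalty and consistency contribution in the expanded form of $\ahE$ carrying a jump of $\psi$ drops out, and the only surviving interior-face term, $-\int_{\FhI}\Ad\llbracket w\rrbracket\cdot\overline{\nabla\psi}$, cancels exactly against the interior-face contribution $+\int_{\FhI}\Ad\llbracket w\rrbracket\cdot\overline{\nabla\psi}$ produced by the integration by parts. What remains in the volume is $-\int_\Omega w\,\overline{\div(\Ad\nabla\psi)}-\int_\Omega(\kn)^2w\,\overline{\psi}$, which equals $(w,\rr)_{0,\Omega}$ by the interior equation in~\eqref{dual:problem:first:equation:strong} (using that $\Ad$ is real), up to the single boundary term $\int_\Gamma w\,\overline{\nabla\psi\cdot\nGamma}$ (recall $\Ad\equiv1$ near $\Gamma$). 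This leftover, together with the remaining $w$-contributions from $\bhGamma(w,\psi)$ and from the coupling pairing $\langle-\delta(\i\k)^{-1}\nablah w\cdot\nGamma+(1-\delta)w,\psim\rangle$, is then shown to vanish: inserting the Robin condition $\overline{\nabla\psi\cdot\nGamma}=-\i\k\overline{\psi}-\overline{\psim}$ from~\eqref{dual:problem:first:equation:strong}, the surface integrals split into an $\overline{\psi}$-part and an $\overline{\psim}$-part whose coefficients each sum to zero. Hence the $w$-group equals $(w,\rr)_{0,\Omega}$.

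For the $\xi$- and $\wext$-groups no integration by parts is needed; here I would push every boundary integral operator onto $\psim$ or $\psitilde$ via $\langle Kv,w\rangle=\langle v,K^\ast w\rangle$, take complex conjugates, and rewrite the adjoints $\Vk^\ast,\Kk^\ast,(\Kprimek)^\ast,\Wk^\ast$ as conjugated operators using Lemma~\ref{lemma:devil}. In the $\xi$-group, the added consistency term $\langle\delta(\i\k)^{-1}\xi,\psim\rangle$ cancels the $\overline{\psim}$-part of $-(\xi,\delta(\i\k)^{-1}\nablah\psi\cdot\nGamma+(1-\delta)\psi)_{0,\Gamma}$ after the Robin substitution, leaving $\langle\xi,-\psi+\Aprimek^\ast\psitilde+\Vk^\ast\psim\rangle$; the conjugate of its kernel is precisely the left-hand side of~\eqref{dual:problem:second:equation:strong}, so the group equals $\langle\xi,\Rm\rangle=(\xi,\rm)_{-\frac32,\Gamma}$ by the representer identity~\eqref{regularity:Melenk:trick}. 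Likewise, the $\wext$-group is $-\langle\wext,(\Bk+\i\k\Aprimek)^\ast\psitilde+((\tfrac12+\Kk)+\i\k\Vk)^\ast\psim\rangle$, and the same adjoint/conjugation manipulation reproduces exactly the left-hand side of~\eqref{dual:problem:third:equation:strong:3}, giving $\langle\wext,\Rext\rangle=(\wext,\rext)_{-\frac12,\Gamma}$ via the second representer identity. Summing the three groups yields~\eqref{eq:adjointconsistency}.

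The main obstacle is the sign and conjugate bookkeeping. Because the duality pairing is antilinear in its second argument, every factor $\i\k$ flips sign when moved through an adjoint, and these flips must be tracked consistently through $\Aprimek$, $\Bk$, and the four boundary operators so that the conjugated kernels line up with the strong equations~\eqref{dual:problem:second:equation:strong}--\eqref{dual:problem:third:equation:strong:3}. The delicate point is that the extra $\delta$-terms introduced in the discretization of~\eqref{third:system:Helmholtz} (see Section~\ref{subsection:second-third-equation}) must cancel exactly in the $w$- and $\xi$-groups; this is the only place where the precise choice of signs of those terms is genuinely used, and verifying these cancellations is the crux of the argument.
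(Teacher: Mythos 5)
Your proposal is correct and follows essentially the same route as the paper's proof: splitting by linearity into the $w$-, $\xi$-, and $\wext$-groups (the paper's STEPS 1--3), obtaining the regularity from Lemma~\ref{lemma:rewriting:dual:problem:strong:formulation} and Theorem~\ref{theorem:regularity:duality} with $s=0$, performing elementwise integration by parts with the vanishing jumps of $\psi\in H^2(\Omega)$ and the Robin condition in~\eqref{dual:problem:first:equation:strong} for the $w$-group, and using the adjoint identities of Lemma~\ref{lemma:devil} together with the representer identities~\eqref{regularity:Melenk:trick} to match the strong equations~\eqref{dual:problem:second:equation:strong}--\eqref{dual:problem:third:equation:strong:3} for the other two groups. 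You also correctly single out the cancellation of the added $\delta$-terms as the point where the specific sign choices of Section~\ref{subsection:second-third-equation} are used, which is precisely how the paper's computation closes.
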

\begin{proof}
 According to Theorem~\ref{theorem:regularity:duality}, $(\psi, \psim, \psitilde)$ belongs to $H^{2}(\Omega)\times H^{\frac{1}{2}}(\Gamma) \times H^{\frac{3}{2}}(\Gamma)$.\\
\textbf{STEP 1:} \emph{$(\psi, \psim, \psitilde)$ satisfies  $\Th( (w,0,0), (\psi,\psim,\psitilde))=(w,\rr)_{0,\Omega}$ for all~$w \in \Hpw^{\frac32+\tepsilon} (\Omegah)$.}\\
Since $\psi\in H^{2}(\Omega)$, on each internal face we have
\begin{align}
 \label{eq:jumpmeanpsizero}
 \llbracket \psi \rrbracket=0, \quad \llbracket \nabla \psi\rrbracket=0, \quad\ldc\nabla \psi\rdc=\nabla\psi.
\end{align} 
We multiply the first equation in~\eqref{dual:problem:first:equation:strong} by
$w\in\Hpw^{\frac{3}{2} +\tepsilon}(\Omegah)$ and integrate by parts
elementwise 
to get
\begin{align} \label{eq:adjointconsist1}
\sum_{\E \in \taun} \left ( -\int_{\partial \E} \Ad w\overline{\nabla \psi \cdot \nGamma} +\int_{\E}\Ad\,\nabla w \cdot \overline{\nabla \psi}\right)  -\int_\Omega  (\kn)^2 w \overline{\psi} =\int_\Omega w\overline{\rr}.
\end{align}
With the aid of the boundary condition
in~\eqref{dual:problem:first:equation:strong}, the definition of the parameter~$\delta$ in~\eqref{compact:part}, and the fact that $\Ad=1$ on $\Gamma$,
we manipulate the boundary term in~\eqref{eq:adjointconsist1} as follows:
\begin{align*}
-\sum_{\E \in \taun}\int_{\partial \E} \Ad w
& \overline{\nabla \psi \cdot \nGamma} =-\int_{\FhI}\Ad \llbracket w\rrbracket\cdot \overline{\nabla \psi}-\int_{\FhB} w \overline{\nabla \psi \cdot \nGamma}  \\
& = -\int_{\FhI}\Ad \llbracket w\rrbracket\cdot \overline{\nabla \psi} +\int_{\FhB}\i\k w\overline{\psi}+\int_{\FhB}w\overline{\psim} -\int_{\FhB}\delta w\overline{\nabla \psi\cdot\nGamma} \\
& \quad - \int_{\FhB}\delta\i\k w\overline{\psi} - \int_{\FhB}\delta w\overline{\psim} -\int_{\FhB} \delta (\i\k)^{-1}\nabla w\cdot\nGamma \overline{\nabla \psi\cdot\nGamma}\\
& \quad -\int_{\FhB}\delta \nabla w\cdot\nGamma\overline{\psi}  - \int_{\FhB}\delta (\i\k)^{-1} \nabla w\cdot\nGamma\overline{\psim}.
\end{align*}
Inserting the above identity into~\eqref{eq:adjointconsist1} and adding some terms with property~\eqref{eq:jumpmeanpsizero}, we see that STEP~1 is valid.\\
\smallskip

\noindent \textbf{STEP 2:} \emph{$(\psi, \psim, \psitilde)$ satisfies $\Th ( (0,\xi,0) , (\psi,\psim,\psitilde))=(\xi,\rm)_{-\frac{3}{2},\Gamma}$ for all~$\xi\in L^2(\Gamma)$}.\\
First, we multiply~\eqref{dual:problem:second:equation:strong} by $\xi\in L^2(\Gamma)$:
\begin{align} \label{eq:adjointconsist2}
-\int_\Gamma \xi \overline{\psi} +\int_\Gamma \xi (1/2+\Kk+\i\k\Vk)\overline{\psitilde} +\int_\Gamma \xi\Vk\overline{\psim} =\int_\Gamma \xi \overline{\Rm}.
\end{align}
Identities~\eqref{adjoint:of:blo:1} and~\eqref{adjoint:of:blo:2} lead to
\begin{align*}
\int_\Gamma 
& \xi (1/2+\Kk+\i\k\Vk)\overline{\psitilde} =\langle \xi,\overline{(1/2+\Kk\overline{\psitilde}}\rangle +\i\k\langle \xi,\overline{\Vk\overline{\psitilde}}\rangle\\
& = \langle \xi, (1/2+\Kk)^* \psitilde\rangle +\i\k\langle \xi,\Vk^*\psitilde\rangle =\langle (1/2+\Kk'+\i\k\Vk)\xi,\psitilde\rangle
\end{align*}
and
\begin{align*}
 \int_\Gamma \xi\Vk\overline{\psim}=
 \langle \xi,\overline{\Vk\overline{\psim}}\rangle=\langle \xi,\Vk^*\psim\rangle=\langle \Vk\xih,\psim\rangle.
\end{align*}
Inserting these two terms into~\eqref{eq:adjointconsist2} and adding
the boundary condition in~\eqref{dual:problem:first:equation:strong}
with the parameter~$\delta$, yields STEP~2.
To deal with the right-hand side of~\eqref{eq:adjointconsist2}, we have used~\eqref{regularity:Melenk:trick}.\\
\smallskip

\noindent \textbf{STEP 3:} \emph{$(\psi, \psim, \psitilde)$ satisfies  $\Th ( (0,0,\wext) , (\psi,\psim,\psitilde))=(\wext,\rext)_{-\frac{1}{2},\Gamma}$ for all~$\wext\in H^{\frac{1}{2}}(\Gamma)$}.\\
We multiply~\eqref{dual:problem:third:equation:strong:3} by $\wext\in H^{\frac{1}{2}}(\Gamma)$:
\[
\begin{split}
& \int_\Gamma \wext (\Wk+\i\k(1/2-\Kk')-\i\k(1/2+\Kk+\i\k\Vk))\overline{\psitilde} +\int_\Gamma\wext ((1/2+\Kk')+\i\k\Vk)\overline{\psim}\\
&=\int_\Gamma \wext\overline{\Rext}.
\end{split}
\]
We use again identities~\eqref{adjoint:of:blo:1} and~\eqref{adjoint:of:blo:2} and write
\begin{align*}
&\int_\Gamma \wext (\Wk+\i\k(1/2-\Kk')-\i\k(1/2+\Kk+\i\k\Vk))\overline{\psitilde}\\
& = \langle \wext, \overline{\Wk\overline{\psitilde}} \rangle +\i\k\langle \wext, \overline{(1/2-\Kk')\overline{\psitilde}} \rangle-\i\k\langle \wext, \overline{(1/2+\Kk)\overline{\psitilde}} \rangle 
                        -(\i\k)^2\langle \wext,\overline{\Vk\overline{\psitilde}} \rangle \\
& = \langle \wext, \Wk^* \psitilde\rangle +\i\k\langle \wext,(1/2-\Kk)^*\psitilde \rangle -\i\k\langle \wext, (1/2+\Kk')^*\psitilde \rangle -(\i\k)^2\langle \wext,\Vk^*\psitilde \rangle \\
& = \langle (\Wk+\i\k(1/2-\Kk)-\i\k(1/2+\Kk'+\i\k\Vk))\wext,\psitilde\rangle,
\end{align*}
and
\begin{align*}
& \int_\Gamma\wext ((1/2+\Kk')+\i\k\Vk)\overline{\psim} =  \langle \wext,\overline{(1/2+\Kk')\overline{\psim}}\rangle +\i\k\langle \wext,\overline{\Vk\overline{\psim}}\rangle \\
& = \langle \wext,(1/2+\Kk)^*\psim\rangle +\i\k\langle \wext,\Vk^*\psim\rangle =\langle (1/2+\Kk+\i\k\Vk)\wext,\psim\rangle.
\end{align*}
Thus, using the above terms and~\eqref{regularity:Melenk:trick} for the right-hand side of~\eqref{eq:adjointconsist2}
shows STEP~3.
Combining STEPS 1--3 gives the assertion.
\end{proof}

\section{Error analysis} \label{section:error-analysis}
In this section, we prove the well posedness of scheme~\eqref{dgBEM:short-version} as well as the convergence rate of the $\h$- and $\p$-versions of the method.
We require the following approximability property.

\begin{assumption}\label{ass:approx}
Let $(\psi, \psim, \psitilde)\in H^2(\Omega)\times H^{\frac12}(\Gamma)\times H^{\frac32}(\Gamma)$ satisfy
$\Vert \psi \Vert_{2,\Omega} + \Vert \psim \Vert_{\frac{1}{2}, \Gamma}
+ \Vert \psitilde \Vert_{\frac{3}{2}, \Gamma}\leq 1$.
Then, for every $\varepsilon>0$, there exists $\eta_0(\varepsilon)>0$ such that for $\h$ and $\p$ satisfying $\h\p^{-1}\in (0, \eta_0(\varepsilon)]$ there exists $(\psih, \psimh, \psitildeh)\in \Vh\times \Wh\times \Zh$ such that
 \begin{align*}
   \Big(\big\Vert \psi-\psih \big \Vert_{\DGp}+\big\Vert  \psim-\psimh \big\Vert_{-\frac{1}{2} ,\Gamma} +\big\Vert \psitilde-\psitildeh \big\Vert_{\frac{1}{2}, \Gamma} 
 + \big\Vert  \frac{\msf^{1/2}}{p}( \psim-\psimh) \big\Vert_{0,\Gamma}\Big) &\leq\varepsilon.
 \end{align*}
  \end{assumption}

\begin{thm} \label{theorem:abstract-analysis}
Let the solution $(u,\m,\uext)$ to~\eqref{equivalent:weak:formulation} be in
$H^{\frac{3}{2} +\tepsilon} (\Omega)\times  L^2(\Gamma) \times H^{\frac{1}{2}}(\Gamma)$
for some $\tepsilon>0$, and $(\uh,\mh, \uhext)\in \Vh\times\Wh\times\Zh$ be the discrete solution of method~\eqref{dgBEM:short-version} with flux parameters defined in~\eqref{dG-parameters} and satisfying the assumptions of Theorem~\ref{theorem:discrete-Garding}.
Furthermore, let Assumption~\ref{ass:approx} be valid.
Then, there exists $\eta_0>0$ such that for $\h$, $\p$ satisfying 
  $\h\p^{-1}\in(0,\eta_0]$ and for all $(\vh,\lambdah, \vexth)$ in $\Vh\times\Wh\times\Zh$,
 \begin{multline*}
 \Vert u-\uh \Vert_{\DG} + \Vert \m-\mh\Vert_{-\frac{1}{2},\Gamma} +\Vert \uext-\uhext\Vert_{\frac{1}{2}, \Gamma}\\
\lesssim \Vert u-\vh \Vert_{\DGp}+\Vert \m-\lambdah\Vert_{-\frac{1}{2} ,\Gamma} +\Vert \uext-\vexth\Vert_{\frac{1}{2}, \Gamma} +  \Vert \msf^{1/2}\p^{-1} (\m-\lambdah)\Vert_{0,\Gamma}.
\end{multline*}
The hidden constant depends on~$\k$.
\end{thm}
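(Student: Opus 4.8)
The plan is to run a Schatz-type (Aubin--Nitsche) duality argument. The discrete G{\aa}rding inequality of Theorem~\ref{theorem:discrete-Garding}, applied to a discrete triple, controls the full energy norm up to the lower-order quantity
\[
S^2 := \Vert \kn\,(u-\uh) \Vert_{0,\Omega}^2 + \Vert \m-\mh \Vert_{-\frac{3}{2},\Gamma}^2 + \Vert \uext-\uhext \Vert_{-\frac{1}{2},\Gamma}^2 ,
\]
and the whole point will be to absorb $S$ by proving, through the adjoint problem, that $S \lesssim_{\k} \varepsilon\, \energynormp{(u-\uh,\m-\mh,\uext-\uhext)}$ with $\varepsilon$ as small as desired once $\h\p^{-1}$ lies below a threshold. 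First I would fix an arbitrary discrete triple $(\vh,\lambdah,\vexth)$ and split the error as $\theta-\chi$, where $\theta:=(u-\vh,\m-\lambdah,\uext-\vexth)$ and $\chi:=(\uh-\vh,\mh-\lambdah,\uhext-\vexth)\in\Vh\times\Wh\times\Zh$. Applying Theorem~\ref{theorem:discrete-Garding} to the discrete $\chi$, rewriting $(\Re+\varepsilon\Im)\Th(\chi,\chi)$ via Galerkin orthogonality~\eqref{Galerkin:orthogonality} as $(\Re+\varepsilon\Im)\Th(\theta,\chi)$, and estimating the latter by the continuity bound of Proposition~\ref{proposition:continuity:trisesquilinear-form} (with the discrete argument $\chi$ measured in $\energynorm{\cdot}$), a Young inequality that absorbs one factor of $\energynorm{\chi}$ yields
\[
\energynorm{\chi}^2 \lesssim_{\k} \energynormp{\theta}^2 + \Vert\chi\Vert_{\mathrm{weak}}^2 ,
\]
where $\Vert\cdot\Vert_{\mathrm{weak}}$ denotes the analogue of $S$ with $(u-\uh,\m-\mh,\uext-\uhext)$ replaced by $\chi$. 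Since $\Vert\chi\Vert_{\mathrm{weak}}\le S+\Vert\theta\Vert_{\mathrm{weak}}$ and $\Vert\theta\Vert_{\mathrm{weak}}\lesssim\energynormp{\theta}$, everything reduces to bounding $S$.

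The duality step is the heart of the matter. I would choose the adjoint data in~\eqref{dual:problem:2} as $\rr:=\k^2|n|^2(u-\uh)$, $\rm:=\m-\mh$ (viewed in $H^{-\frac{3}{2}}(\Gamma)$), and $\rext:=\uext-\uhext$ (viewed in $H^{-\frac{1}{2}}(\Gamma)$), so that its right-hand side functional, evaluated at the error $e=(u-\uh,\m-\mh,\uext-\uhext)$, reproduces exactly $S^2$ through the mixed pairings $(\cdot,\cdot)_{0,\Omega}$, $(\cdot,\cdot)_{-\frac{3}{2},\Gamma}$, $(\cdot,\cdot)_{-\frac{1}{2},\Gamma}$; here $e$ is an admissible test function for Proposition~\ref{lemma:adjointconsistency} because $u-\uh\in\Hpw^{\frac{3}{2}+\tepsilon}(\Omegah)$, $\m-\mh\in L^2(\Gamma)$, and $\uext-\uhext\in H^{\frac{1}{2}}(\Gamma)$. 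Adjoint consistency then gives $S^2=\Th(e,(\psi,\psim,\psitilde))$ for the adjoint solution, and Galerkin orthogonality lets me subtract any discrete $(\psih,\psimh,\psitildeh)$, whence
\[
S^2 \lesssim_{\k} \energynormp{e}\,\energynormp{(\psi-\psih,\psim-\psimh,\psitilde-\psitildeh)}
\]
by Proposition~\ref{proposition:continuity:trisesquilinear-form}. The regularity estimate of Theorem~\ref{theorem:regularity:duality} with $s=0$ gives $\Vert\psi\Vert_{2,\Omega}+\Vert\psim\Vert_{\frac{1}{2},\Gamma}+\Vert\psitilde\Vert_{\frac{3}{2},\Gamma}\lesssim_{\k}\Vert\rr\Vert_{0,\Omega}+\Vert\rm\Vert_{-\frac{3}{2},\Gamma}+\Vert\rext\Vert_{-\frac{1}{2},\Gamma}\lesssim_{\k}S$ (the bound $|n|\in L^\infty$ enters for $\rr$); after normalizing the adjoint data to unit norm, Assumption~\ref{ass:approx} supplies, for $\h\p^{-1}\le\eta_0(\varepsilon)$, a discrete approximant with $\energynormp{\cdot}$-error at most $\varepsilon$, and scaling back gives $\energynormp{(\psi-\psih,\psim-\psimh,\psitilde-\psitildeh)}\lesssim_{\k}\varepsilon\,S$. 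Combining these estimates yields $S\lesssim_{\k}\varepsilon\,\energynormp{e}$.

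It then remains to close the loop. Using $\energynormp{e}\le\energynormp{\theta}+\energynormp{\chi}$ together with the inverse estimate $\energynormp{\chi}\lesssim\energynorm{\chi}$ for the discrete $\chi$ (the polynomial inverse inequality already exploited in Proposition~\ref{proposition:continuity:trisesquilinear-form}), the bound on $S$ reads $S\lesssim_{\k}\varepsilon(\energynormp{\theta}+\energynorm{\chi})$. Inserting it into $\energynorm{\chi}^2\lesssim_{\k}\energynormp{\theta}^2+\Vert\chi\Vert_{\mathrm{weak}}^2$ gives $\energynorm{\chi}^2\lesssim_{\k}\energynormp{\theta}^2+\varepsilon^2\energynorm{\chi}^2$; choosing $\varepsilon$, and hence $\eta_0$, so small that the hidden $\k$-dependent constant times $\varepsilon^2$ is below $\tfrac{1}{2}$ lets me absorb $\varepsilon^2\energynorm{\chi}^2$ and conclude $\energynorm{\chi}\lesssim_{\k}\energynormp{\theta}$. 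The triangle inequality $\energynorm{e}\le\energynorm{\theta}+\energynorm{\chi}\le\energynormp{\theta}+\energynorm{\chi}$ then gives the asserted bound, $(\vh,\lambdah,\vexth)$ being arbitrary.

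I expect the main obstacle to be the duality step, and specifically the disciplined use of Assumption~\ref{ass:approx}: one must verify that the chosen adjoint data genuinely reproduce $S^2$ through the three distinct pairings of~\eqref{dual:problem:2}, that $e$ is admissible in Proposition~\ref{lemma:adjointconsistency}, and that the regularity bound feeds the \emph{normalized} adjoint data into Assumption~\ref{ass:approx} with the correct scaling, so that the smallness parameter $\varepsilon$—tied to the threshold $\h\p^{-1}\le\eta_0$—is precisely what enables the final absorption. As a by-product, the same a priori estimate with $\theta=0$ shows that the homogeneous discrete problem admits only the trivial solution, which, the discrete system being square, also secures existence and uniqueness of $(\uh,\mh,\uhext)$.
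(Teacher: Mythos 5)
Your proposal is correct and follows essentially the same Schatz-type argument as the paper's proof: triangle-inequality splitting, discrete G{\aa}rding inequality combined with Galerkin orthogonality, continuity, and Young's inequality, then a duality step through the adjoint problem with data chosen to reproduce the compact-perturbation terms, the regularity bound plus Assumption~\ref{ass:approx} to gain the small factor $\varepsilon$, and final absorption under the threshold on $\h\p^{-1}$. The only deviations are cosmetic: the paper normalizes the adjoint data with the factors $2(\kn)^2$ and $\cG(\k)$ and splits $\x-\xh$ into $(\x-\yh)+(\yh-\xh)$ before applying the continuity estimate, whereas you bound $\Th(\x-\xh,\Psi-\Psi_h)$ by $\energynormp{\x-\xh}\,\energynormp{\Psi-\Psi_h}$ and pass to the discrete energy norm via inverse estimates afterwards.
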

\begin{proof}
We use Schatz' argument \cite{schatz1974observation}; see also~\cite{GHP_PWDGFEM_hversion,MelenkParsaniaSauter_generalDGHelmoltz,FEMBEM:mortar}.
For convenience, we write $\x:=(u,\m,\uext)$ and $\xh:=(\uh,\mh,\uhext)$. 
For all $\yh:=(\vh,\lambdah,\vexth)$ in $\Vh \times \Wh \times \Zh $ we get

\begin{align} \label{eq:error1}
\energynorm{\x-\xh}& \leq \energynorm{\x-\yh} + \energynorm{\yh-\xh}.
\end{align}
 We use the discrete G{\aa}rding inequality~\eqref{Garding:inequality} to estimate \begin{multline} \label{eq:error2}
  \energynorm{\yh-\xh}^2
\lesssim \Th ( \yh-\xh,\yh-\xh)\\
+2\Vert \kn(\vh-\uh)\Vert_{0,\Omega}^2+ \cG(\k) \left( \Vert \lambdah-\mh \Vert^2_{-\frac{3}{2}, \Gamma} + \Vert \vexth -\uhext\Vert^2_{-\frac{1}{2}, \Gamma}\right).
\end{multline}
We estimate the first term on the right-hand side of~\eqref{eq:error2}. 
Using \eqref{Galerkin:orthogonality} to replace $\xh$ by $\x$ in the first argument,
applying Proposition~\ref{proposition:continuity:trisesquilinear-form}, where the second argument is discrete, and using the Young inequality lead to
\begin{align} \label{eq:error3}
\begin{split}
\Th( \yh-\xh,\yh-\xh)
&=\Th( \yh-\x,\yh-\xh)
\\
&\lesssim \varepsilon_1^{-1}
\energynormp{\x-\yh}^2
+\varepsilon_1 \energynorm{\xh-\yh}^2.
\end{split}
\end{align}
where $\varepsilon_1>0$ will be fixed later on.
Next, we estimate the compact perturbation term appearing in~\eqref{eq:error2}.
The triangle inequality yields
\begin{align} \label{eq:error5}
\begin{split}
2\Vert \kn(\vh-\uh)\Vert_{0,\Omega}^2
&+ \cG(\k) \left( \Vert \lambdah-\mh \Vert^2_{-\frac{3}{2}, \Gamma} +  \Vert \vexth -\uhext\Vert^2_{-\frac{1}{2}, \Gamma}\right)\\
&\leq  2\Vert \kn(u-\vh)\Vert_{0,\Omega}^2+ \cG(\k) \left( \Vert m-\lambdah \Vert^2_{-\frac{3}{2}, \Gamma} +  \Vert \uext-\vexth\Vert^2_{-\frac{1}{2}, \Gamma}\right)\\
& \quad+ 2\Vert \kn(u-\uh)\Vert_{0,\Omega}^2+ \cG(\k) \left( \Vert \m-\mh \Vert^2_{-\frac{3}{2}, \Gamma} + \Vert \uext -\uhext\Vert^2_{-\frac{1}{2}, \Gamma}\right).
\end{split}
\end{align}
We apply a standard duality argument for the last two terms.
More precisely, we consider~\eqref{eq:adjointconsistency} with~$\rr=2(\kn)^2(u-\uh)$, $\rm=\cG(\k)(\m-\mh)$, $\rext=\cG(\k)(\uext-\uhext)$, and $\x-\xh$ for the test function.
We collect the solution to the adjoint problem into the vector
$\Psi:=(\psi,\psim,\psitilde)$ and we get

\begin{align*}
& 2\Vert \kn(u-\uh)\Vert_{0,\Omega}^2 + \cG(\k) \left( \Vert \m-\mh \Vert^2_{-\frac{3}{2}, \Gamma}  
   +  \Vert \uext -\uhext\Vert^2_{-\frac{1}{2}, \Gamma}\right)=\Th ( \x-\xh, \Psi).
\end{align*}
Next, we use the Galerkin orthogonality~\eqref{Galerkin:orthogonality} to  subtract
an arbitrary $\Psi_h:=(\psih,\psimh,\psitildeh)\in \Vh \times \Wh \times \Zh$ to the right-hand side,
and apply the continuity estimate~\eqref{eq:continuity} (the
first argument in the second term is discrete):
\begin{align}
\begin{split}
\label{eq:error6}
   2\Vert \kn(u-\uh)\Vert_{0,\Omega}^2&+ \cG(\k) \left( \Vert \m-\mh \Vert^2_{-\frac{3}{2}, \Gamma}  
                                        +  \Vert \uext -\uhext\Vert^2_{-\frac{1}{2}, \Gamma}\right)\\
	&\quad=\Th ( \x-\xh, \Psi-\Psi_h)  =\Th ( \x-\yh, \Psi-\Psi_h)
	 +\Th ( \yh-\xh, \Psi-\Psi_h)\\									
  &\quad \lesssim \big(\energynormp{\x-\yh}+ \energynorm{\x_h-y_h}\big) \energynormp{\Psi-\Psi_h}.
 \end{split}
\end{align}
From~\eqref{bounds:norms:solutions:dual}, we see that
\begin{align*}
 \Vert \psi \Vert_{2,\Omega} + \Vert \psim \Vert_{\frac{1}{2}, \Gamma} + \Vert \psitilde \Vert_{\frac{3}{2}, \Gamma} \lesssim  \Vert u-\uh \Vert_{0,\Omega} + \Vert \m-\mh \Vert_{-\frac{1}{2}, \Gamma} + \Vert \uext-\uhext\Vert_{\frac{1}{2}, \Gamma}.
\end{align*}
This, together with Assumption~\ref{ass:approx}, yields 
\begin{align*}
  \energynormp{\Psi-\Psi_h}
  &\lesssim    
    \varepsilon \big(\Vert \psi \Vert_{2,\Omega} + \Vert \psim \Vert_{\frac{1}{2}, \Gamma} + \Vert \psitilde \Vert_{\frac{3}{2}, \Gamma} \big)
    \lesssim
    \varepsilon \energynorm{\x-\xh}.
\end{align*}
We insert this bound into~\eqref{eq:error6} and merge the resulting bound with~\eqref{eq:error5}:
\begin{align}  \label{eq:error4}
  \begin{split}
2\Vert \kn(\vh-\uh)\Vert_{0,\Omega}^2
&+ \cG(\k) \left( \Vert \lambdah-\mh \Vert^2_{-\frac{3}{2}, \Gamma} + \Vert \vexth -\uhext\Vert^2_{-\frac{1}{2}, \Gamma}\right)\\
&\lesssim  \energynorm{\x-\yh}^2 + (\energynormp{\x-\yh}+\energynorm{\x_h-\yh})\varepsilon \energynorm{\x-\x_h}\\
&\lesssim (1+\varepsilon) \energynormp{\x-\yh}^2 + \varepsilon \energynorm{\x_h-\yh}^2.
\end{split}
\end{align}
Eventually, we insert~\eqref{eq:error3} and~\eqref{eq:error4} in~\eqref{eq:error2} and, writing~$c$ for the  constant implied in all the previous estimates, we get
\begin{align*}
\energynorm{\xh-\yh}^2
&\leq  c \big(\varepsilon_1^{-1}+1+\varepsilon\big)\energynormp{\x-\yh}^2
    +  c \big(\varepsilon_1+\varepsilon\big)\energynorm{\xh-\yh}^2.
\end{align*}
Assuming that $\varepsilon$ in Assumption~\ref{ass:approx} is sufficiently small
and taking $\varepsilon_1$ small enough, we shift the second term to the left-hand side:
\begin{equation} \label{eq:error7}
(1- c (\varepsilon+\varepsilon_1)) \energynorm{\xh-\yh}^2 \lesssim \energynormp{x-y_h}^2.
\end{equation}
Inserting~\eqref{eq:error7} in~\eqref{eq:error1} concludes the proof.
\end{proof}

The quasi-optimality result Theorem~\ref{theorem:abstract-analysis} can lead
to quantitative error estimates that are explicit in the mesh size $\h$ and the polynomial
degree $\p$. To obtain higher order rates of convergence, the element maps $\Phi_K$ 
need to have more regularity than what has been assumed so far at the outset of 
Section~\ref{subsection:dGOmega}. To be concrete, one can make the following 
assumption as in \cite{bernardi_optimal_interpolation}.
\begin{assumption}
\label{assumption:bernardi89}
Given $s \in \mathbb{N}$, there is a constant $\widetilde c_B > 0 $ such that 
\[
\|D^l \Phi_K \|_{L^\infty(\widehat{K})} \leq \widetilde c_B  h_K^l, \qquad 
2 \leq l \leq s+1. 
\]
\end{assumption}
\begin{remark}
Scenarios for the constructions of triangulations and element maps that ensure the validity of Assumption~\ref{assumption:bernardi89} are provided in \cite{bernardi_optimal_interpolation}.  
\eremk
\end{remark}
\begin{cor}
\label{corollary:hp-convergence}
Let $s \in \mathbb{N}$ and Assumption~\ref{assumption:bernardi89} be valid.  Set $h:= \max_K (h_K)$. 
Let the solution  $(u,\m,\uext)$ to~\eqref{equivalent:weak:formulation}  
belong to $H^{s+1}(\Omega) \times H^{s-\frac{1}{2}}(\Gamma) \times H^{s+\frac12}(\Gamma)$
and $(\uh,\mh, \uhext)\in \Vh\times\Wh\times\Zh$ be the discrete solution
of method~\eqref{dgBEM:short-version} with flux parameters defined
in~\eqref{dG-parameters} and satisfying the assumptions of Theorem~\ref{theorem:discrete-Garding}.
Then, there are constants $\eta_0 = \eta_0(k)$ and $c(k) > 0$ such
that, under the scale resolution  condition $hp^{-1} \in (0,\eta_0]$, the following bound is valid
\begin{align*}
& \Vert u - \uh \Vert_{\DG} + \Vert \m - \mh \Vert_{-\frac12, \Gamma} + \Vert \uext - \uhext \Vert_{\frac{1}{2},\Gamma}\\
& \qquad \leq c (k) h^{\min (\p,s)} \p^{-s+\frac12} ( \Vert  u \Vert_{s+1,\Omega} + \Vert \m \Vert_{ s-\frac{1}{2},\Gamma} + \Vert \uext \Vert_{s+\frac12,\Gamma}).
\end{align*}
\end{cor}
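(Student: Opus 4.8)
The plan is to read off the quantitative estimate directly from the quasi-optimality bound of Theorem~\ref{theorem:abstract-analysis}, after inserting sharp $hp$-approximation estimates for each of the three solution components. The first step is to verify that the hypotheses of Theorem~\ref{theorem:abstract-analysis} hold in the present setting. Under Assumption~\ref{assumption:bernardi89} the element maps $\Phi_K$ carry enough regularity to admit $hp$-approximation operators on the curved mesh whose error tends to zero as $\h\p^{-1}\to 0$. Applying such operators to the dual triple $(\psi,\psim,\psitilde)\in H^2(\Omega)\times H^{\frac12}(\Gamma)\times H^{\frac32}(\Gamma)$ supplied by Theorem~\ref{theorem:regularity:duality} produces precisely the convergent approximation required in Assumption~\ref{ass:approx} once $\h\p^{-1}$ is below a threshold $\eta_0(\varepsilon)$. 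Hence Theorem~\ref{theorem:abstract-analysis} is applicable, and it only remains to bound its right-hand side.

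Next I would select the three discrete approximants. For $u\in H^{s+1}(\Omega)$ I would take an $hp$-quasi-interpolant $\vh\in\Vh$; since the internal faces of $\Omegah$ are flat and $u\in H^1(\Omega)$ is single-valued across them, I would choose a conforming interpolant so that $\llbracket u-\vh\rrbracket=0$ and the jump contributions to $\Vert u-\vh\Vert_{\DGp}$ vanish, leaving only the volume gradient, the $L^2$ term, the two boundary trace terms on $\Gamma$, and the interior-average term of the $\DGp$ norm. For $\m\in H^{s-\frac12}(\Gamma)$ and $\uext\in H^{s+\frac12}(\Gamma)$ I would use surface $hp$-projections onto $\Wh=\mathcal S^{\p-1,0}(\Gamma,\Gammah)$ and $\Zh=\mathcal S^{\p,1}(\Gamma,\Gammah)$, measured in $H^{-\frac12}(\Gamma)$ (together with the weighted norm $\Vert \msf^{1/2}\p^{-1}\,\cdot\Vert_{0,\Gamma}$) and in $H^{\frac12}(\Gamma)$, respectively. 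The component-wise $hp$-estimates, whose proof I would relegate to the appendix on quantitative $\h$- and $\p$-explicit estimates, each give a bound of the form $h^{\min(\p,s)}\p^{-s+\frac12}$ times the appropriate Sobolev norm of the datum. Combining them in the quasi-optimal bound and absorbing all $\k$-dependent constants into $c(\k)$ yields the stated assertion.

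The main obstacle is establishing the component-wise $hp$-approximation estimates with the exact powers on curved simplicial meshes. In particular, the $\DGp$ norm contains trace terms on $\Gamma$ and interior averages whose control requires the polynomial trace and inverse inequalities on curved elements from Remark~\ref{remark:choice-dG-parameters}, together with the higher-order map regularity of Assumption~\ref{assumption:bernardi89}; it is precisely the passage from volume to trace quantities that costs a half power of $\p$ and produces the $\p^{-s+\frac12}$ factor rather than the optimal $\p^{-s}$. A secondary difficulty is the $hp$-BEM approximation of the surface unknowns in the fractional-order norms $H^{\pm\frac12}(\Gamma)$ on a curved surface, which I would obtain by interpolating between integer-order estimates. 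Once these two sets of estimates are in place, the remainder is a routine combination of the bounds inside the quasi-optimal inequality.
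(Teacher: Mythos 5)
Your proposal follows the same skeleton as the paper's proof: verify Assumption~\ref{ass:approx} by applying the approximation machinery to the dual triple from Theorem~\ref{theorem:regularity:duality} (the paper notes this is the case $s=1$ of the estimates below, giving $\varepsilon = O(h/p)$ and fixing $\eta_0$), invoke the quasi-optimality of Theorem~\ref{theorem:abstract-analysis}, and insert component-wise $hp$-estimates; for the volume unknown both you and the paper use a conforming approximant built from $p$-explicit reference-element results (\cite[Lemma~{B.3}, Thm.~{B.4}]{melenk-sauter10}) transported via the scaling relations that Assumption~\ref{assumption:bernardi89} makes available, with the half power of $\p$ lost in the trace/flux terms of the $\DGp$ norm. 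Where you genuinely diverge is the surface components: you propose direct surface $hp$-projections with fractional-norm error bounds obtained "by interpolating between integer-order estimates" on the curved surface, whereas the paper deliberately sidesteps fractional Sobolev norms under change of variables on $\Gamma$ by lifting into the volume. Concretely, for $\uext$ the paper lifts to $U^{ext}\in H^{s+1}(\Omega)$, approximates in $S^{p,1}(\Omega,\taun)$ with rate $h^{\min(p,s)}p^{-s}$, and takes the trace, so that the $H^{\frac12}(\Gamma)$ bound comes for free from the $H^1(\Omega)$ bound; for $\m$ it uses the $L^2(\Gamma)$-projection with facewise estimates through a lifting $M\in H^s(\Omega)$. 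The lifting route buys a clean argument on curved surfaces at the price of an extra half power nowhere harmful, since the volume term dominates the final rate anyway.

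One step of your plan, as stated, would not go through: the $H^{-\frac12}(\Gamma)$ estimate for $\m-\mh$ cannot be obtained by interpolating integer-order estimates, because interpolation between $L^2(\Gamma)$ and $H^1(\Gamma)$ bounds only yields nonnegative orders, and reaching order $-\frac12$ through interpolation would presuppose an $H^{-1}(\Gamma)$ estimate that itself requires duality. The paper supplies precisely this missing ingredient in~\eqref{eq:approx-m-dual}: a duality argument that exploits the orthogonality of the $L^2(\Gamma)$-projection, testing against $v-v_h$ with $v_h$ the projection of the dual test function and estimating both factors facewise through liftings. Similarly, the weighted term $\Vert \msf^{1/2}\p^{-1}(\m-\mh)\Vert_{0,\Gamma}$ in the $\DGp$-type norm needs a facewise $L^2$ bound of order $h_K^{\min(p,s)-\frac12}p^{-s+\frac12}$ for data with only fractional surface regularity $H^{s-\frac12}(\Gamma)$, which the paper again extracts from the volume lifting $M$ rather than from surface interpolation. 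With the duality step and the lifting device added, your argument closes and reproduces the stated rate.
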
 
\begin{proof}
See Appendix~\ref{appendix:proof-Corollary}.
\end{proof}

\begin{remark}[suboptimality in $\p$]
The suboptimality by half an order in the polynomial degree~$\p$ is due to the $p$-scaling of 
the parameter $\alpha$ in the definition of the \dG{} norm in~\eqref{dG:norm:1}.
Under further assumptions on the mesh, it is possible to argue as 
in~\cite[Sec.~{4.2.2}]{MelenkParsaniaSauter_generalDGHelmoltz} to obtain $\p$-optimal estimates. 
\eremk
\end{remark}
\begin{remark}[exponential convergence]
Exponential convergence that is explicit in $\h$ and $\p$ for analytic solutions can be proved if 
the element maps $\Phi_K$ are assumed to be of the form $\Phi_K = R_K \circ A_K$ with an analytic map $R_K$ and an
affine map $A_K$. We refer to \cite[Assump.~{4.1}]{MelenkParsaniaSauter_generalDGHelmoltz} for details. 
See also \cite[Sec.~{3.3.2}]{melenk02} for the concept of ``patchwise structured meshes''.  
\eremk
\end{remark}

\section{Numerical results} \label{section:numerical-results}
In this section, we present numerical results validating the
convergence rate detailed in Corollary~\ref{corollary:hp-convergence}.

We implemented method~\eqref{dGBEM:long-version} by combining the NGSolve package~\cite{NGSolve} with the
BEM++ library~\cite{BEM++paper,BEM++}.
In particular, we proceeded as in~\cite{FEMBEM:mortar}, yet replacing the interior discretization with the novel discontinuous Galerkin part.
In order to solve the resulting algebraic linear system, we used a GMRES iteration with a preconditioner based on $\mathcal{H}$-matrix $LU$-decomposition provided by the H2Lib library~\cite{h2lib}.

We considered sequences of quasiuniform tetrahedral meshes $\Omega_h$ in~$\Omega$ and 
used the trace of the corresponding interior finite element mesh as a partition $\Gamma_h$ of~$\Gamma$.
As for the choice of the discretization spaces, we picked~$\Vh = S^{p,0}(\Omega,\Omega_h)$ as the space of discontinuous piecewise polynomials 
of order $p$ over the tetrahedral meshes $\Omega_h$, whereas we picked~$\Zh = S^{p,1}(\Gamma,\Gamma_h)$ and~$\Wh = S^{p-1,0}(\Gamma,\Gamma_h)$ 
as the spaces of continuous and discontinuous piecewise polynomials of orders~$p$ and $p-1$ over the triangulation $\Gamma_h$ of~$\Gamma$, respectively.

We are interested in studying the convergence of the following relative errors:
\[
\frac{\Vert u - \uh \Vert_{0,\Omega}}{\Vert u \Vert_{0,\Omega}}, \qquad
\frac{\Vert \nablah(u - \uh) \Vert_{0,\Omega}}{\Vert \nabla u \Vert_{0,\Omega}}, \qquad
\h^{\frac12}\frac{\Vert \m - \mh \Vert_{0,\Gamma}}{\Vert m \Vert_{0,\Gamma}}, \qquad
\h^{-\frac12} \frac{\Vert \uext - \uhext \Vert_{0,\Gamma}}{\Vert \uext \Vert_{0, \Gamma}}.
\]
For the $\h$-version of the method, the last two error measures scale like the relative errors 
in the~$H^{-\frac12}(\Gamma)$ and the~$H^{\frac12}(\Gamma)$, respectively. 
The stabilization parameters of the DG method~\eqref{dG-parameters} are taken to be $\afrakz:=10$, and $\bfrakz:=\dfrakz:=0.1$.

We investigated the performance of method~\eqref{dGBEM:long-version} for 
the domain~$\Omega:=(-1,1)^3$ and the coefficients~$\nu=1$ and~$n=1$
in~\eqref{starting:problem}, and prescribe the exact smooth solution 
\begin{align} \label{exact:solution}
  u(x,y,z):=\begin{cases}
    \sin(\k\,x)\cos(\k y) & (x,y,z) \in \Omega \\
    \frac{e^{i \k \sqrt{x^2+y^2+z^2}}}{\sqrt{x^2+y^2+z^2}} & \text{otherwise}.
  \end{cases}
\end{align}
The function~$u$ solves the Helmholtz equation in $\Rbb^3$ but has nonzero Dirichlet and Neumann jumps.
This case is not covered by the theory in Sections~\ref{section:continuous:problem}--\ref{section:error-analysis},
but can be incorporated into method~\eqref{dGBEM:long-version} via a suitable modification of the right-hand sides.

The coupling strategy based on the mortar variable $\m$ aims at solvability for all wave numbers~$\k$. To underline
this feature, we select the wave numbers~$\k$ as $\k:=n \sqrt{3}\pi$ for $n =1$, $2$, which are the first two nonzero eigenvalues of the Dirichlet and Neumann Laplacian on the unit cube.
Figures~\ref{fig:conv_h1} and~\ref{fig:conv_h2} show that the 
method~\eqref{dGBEM:long-version} delivers optimal convergence rates
of the errors after some pre-asymptotic phase, which is expected due to dispersion errors (``pollution'' effect) typical of 
wave propagation problems. 
These rates partly surpass those predicted by Corollary~\ref{corollary:hp-convergence},
which only considers a convergence of the combined error, i.e., the rate of all contributions would be dominated by the 
lowest order contribution, namely, the $H^1(\Omega)$ seminorm.
A similar superconvergence phenomenon is well known for the simpler Poisson problem and analyzed in details in~\cite{mpw2017simultaneous}.

\begin{figure}[htb]
  \centering
  \begin{subfigure}{0.45\textwidth}
    %
%
%
\begin{tikzpicture}
    \pgfplotsset{   
      cycle list/Set1,
      cycle multiindex* list={
        mark list\nextlist
        Set1\nextlist
      },
    }
  \begin{loglogaxis}[
    width=6.2cm, height=6cm,     
    grid = major,
    grid style={dashed, gray!30},
    axis background/.style={fill=white},
    xlabel=$h$,
    ylabel=$\frac{\Vert u-\uh \Vert_{0,\Omega}}{\Vert u \Vert_{0,\Omega}}$,
    legend style={legend pos=south west},
    log basis x=2,
    xtick = {1,  0.25,  0.0625, 0.015625,0.0078125,0.00390625},
    legend pos= south east
    ]
%
  %
%
    \def\thisk{10}
    \def\thatk{11}
    \def\datacol{errltwo}
    \def\orders{1,2,3}
    \ifthenelse{\thisk<9}{\def\myxmin{0.03}}{\def\myxmin{0.012}}
    \foreach \order in \orders{
    \addplot+[solid,mark size=2pt,mark options={line width=1.0pt}] table    
    [
    x expr=2^(-\thisrow{num_refines}),
    y expr=\thisrow{\datacol}*x^(0),
    col sep=comma,
    restrict expr to domain={\thisrow{order}}{\order:\order},
    restrict expr to domain={\thisrow{k}}{\thisk:\thatk}
    ]{output_dg2.csv};
    }
    \def\expectedorder{2}
    \def\cc{100}   
    \addplot [black,dashed ] %
        expression [domain=0.5:\myxmin, samples=15] { \cc*x^(\expectedorder)} %
        node [pos=0.85,inner sep=0pt, anchor=south east] { { \tiny $\noexpand\mathcal{O}(h^{2})$}};
    \def\expectedorder{3}
    \def\cc{100}   
    \addplot [black,dashed ] %
        expression [domain=0.5:\myxmin, samples=15] { \cc*x^(\expectedorder)} %
        node [pos=0.85,inner sep=0pt, anchor=south east] { { \tiny $\noexpand\mathcal{O}(h^{3})$}};

    \def\expectedorder{4}
    \def\cc{10}   
    \addplot [black,dashed ] %
        expression [domain=0.5:\myxmin, samples=15] { \cc*x^(\expectedorder)} %
        node [pos=0.85,inner sep=0pt, anchor=south east] { { \tiny $\noexpand\mathcal{O}(h^{4})$}};

     \legend{$p=1$,$p=2$, $p=3$}
  \end{loglogaxis}
\end{tikzpicture}
%
%
  \end{subfigure}
  \quad
  \begin{subfigure}{0.45\textwidth}
    %
%
%
  \begin{tikzpicture}
    \pgfplotsset{   
      cycle list/Set1,
      cycle multiindex* list={
        mark list\nextlist
        Set1\nextlist
      },
    }
  \begin{loglogaxis}[
    width=6.2cm, height=6cm,     
    grid = major,
    grid style={dashed, gray!30},
    axis background/.style={fill=white},
    xlabel=$h$,
    ylabel=$\frac{\Vert \nabla u- \nablah\uh \Vert_{0,\Omega}}{\Vert \nabla u \Vert_{0,\Omega}}$,
    legend style={legend pos=south west},
    log basis x=2,
    xtick = {1,  0.25,  0.0625, 0.015625,0.0078125,0.00390625},
    legend pos= south east
    ]
%
  %
    \def\thisk{10}
    \def\thatk{11}
    \def\datacol{errhone}
    \def\orders{1,2,3}
    \ifthenelse{\thisk<9}{\def\myxmin{0.03}}{\def\myxmin{0.012}}
    \foreach \order in \orders{
    \addplot+[solid,mark size=2pt,mark options={line width=1.0pt}] table    
    [
    x expr=2^(-\thisrow{num_refines}),
    y expr=\thisrow{\datacol}*x^(0),
    col sep=comma,
    restrict expr to domain={\thisrow{order}}{\order:\order},
    restrict expr to domain={\thisrow{k}}{\thisk:\thatk}
    ]{output_dg2.csv};
    }
    \def\expectedorder{1}
    \def\cc{10}   
    \addplot [black,dashed ] %
        expression [domain=0.5:\myxmin, samples=15] { \cc*x^(\expectedorder)} %
        node [pos=0.85,inner sep=0pt, anchor=south east] { { \tiny $\noexpand\mathcal{O}(h^{1})$}};
    \def\expectedorder{2}
    \def\cc{20}   
    \addplot [black,dashed ] %
        expression [domain=0.5:\myxmin, samples=15] { \cc*x^(\expectedorder)} %
        node [pos=0.85,inner sep=0pt, anchor=south east] { { \tiny $\noexpand\mathcal{O}(h^{2})$}};

    \def\expectedorder{3}
    \def\cc{10}   
    \addplot [black,dashed ] %
        expression [domain=0.5:\myxmin, samples=15] { \cc*x^(\expectedorder)} %
        node [pos=0.85,inner sep=0pt, anchor=south east] { { \tiny $\noexpand\mathcal{O}(h^{3})$}};

     \legend{$p=1$,$p=2$, $p=3$}
  \end{loglogaxis}
\end{tikzpicture}
%
%
  \end{subfigure}
  \quad
    \begin{subfigure}{0.45\textwidth}
    %
%
%
  \begin{tikzpicture}
    \pgfplotsset{   
      cycle list/Set1,
      cycle multiindex* list={
        mark list\nextlist
        Set1\nextlist
      },
    }
  \begin{loglogaxis}[
    width=6.2cm, height=6cm,     
    grid = major,
    grid style={dashed, gray!30},
    axis background/.style={fill=white},
    xlabel=$h$,
    ylabel=$\frac{h^{1/2} \Vert \m-\mh \Vert_{0,\Gamma}}{\Vert \m \Vert_{0,\Gamma}}$,
    legend style={legend pos=south west},
    log basis x=2,
    xtick = {1,  0.25,  0.0625, 0.015625,0.0078125,0.00390625},
    legend pos= south east
    ]
%
  %
    \def\thisk{10}
    \def\thatk{11}
    \def\datacol{errm}
    \def\orders{1,2,3}
    \ifthenelse{\thisk<9}{\def\myxmin{0.03}}{\def\myxmin{0.012}}
    \foreach \order in \orders{
    \addplot+[solid,mark size=2pt,mark options={line width=1.0pt}] table    
    [
    x expr=2^(-\thisrow{num_refines}),
    y expr=\thisrow{\datacol}*x^(0.5),
    col sep=comma,
    restrict expr to domain={\thisrow{order}}{\order:\order},
    restrict expr to domain={\thisrow{k}}{\thisk:\thatk}
    ]{output_dg2.csv};
    }
    \def\expectedorder{1.5}
    \def\cc{10}   
    \addplot [black,dashed ] %
        expression [domain=0.5:\myxmin, samples=15] { \cc*x^(\expectedorder)} %
        node [pos=0.85,inner sep=0pt, anchor=south east] { { \tiny $\noexpand\mathcal{O}(h^{1.5})$}};
    \def\expectedorder{2.5}
    \def\cc{12}   
    \addplot [black,dashed ] %
        expression [domain=0.5:\myxmin, samples=15] { \cc*x^(\expectedorder)} %
        node [pos=0.85,inner sep=0pt, anchor=south east] { { \tiny $\noexpand\mathcal{O}(h^{2.5})$}};

    \def\expectedorder{3.5}
    \def\cc{10}   
    \addplot [black,dashed ] %
        expression [domain=0.5:\myxmin, samples=15] { \cc*x^(\expectedorder)} %
        node [pos=0.85,inner sep=0pt, anchor=south east] { { \tiny $\noexpand\mathcal{O}(h^{3.5})$}};

     \legend{$p=1$,$p=2$, $p=3$}
  \end{loglogaxis}
\end{tikzpicture}
%
%
  \end{subfigure}
  \quad
  \begin{subfigure}{0.45\textwidth}
    %
%
%
  \begin{tikzpicture}
    \pgfplotsset{   
      cycle list/Set1,
      cycle multiindex* list={
        mark list\nextlist
        Set1\nextlist
      },
    }
  \begin{loglogaxis}[
    width=6.2cm, height=6cm,     
    grid = major,
    grid style={dashed, gray!30},
    axis background/.style={fill=white},
    xlabel=$h$,
    ylabel=$\frac{h^{-1/2} \Vert \uext-\uhext \Vert_{0,\Gamma}}{\Vert \uext \Vert_{0,\Gamma}}$,
    legend style={legend pos=south west},
    log basis x=2,
    xtick = {1,  0.25,  0.0625, 0.015625,0.0078125,0.00390625},
    legend pos= south east
    ]
%
  %
    %
    \def\thisk{10}
    \def\thatk{11}
    \def\datacol{errphi}
    \def\orders{1,2,3}
    \ifthenelse{\thisk<9}{\def\myxmin{0.03}}{\def\myxmin{0.012}}
    \foreach \order in \orders{
    \addplot+[solid,mark size=2pt,mark options={line width=1.0pt}] table    
    [
    x expr=2^(-\thisrow{num_refines}),
    y expr=\thisrow{\datacol}*x^(-0.5),
    col sep=comma,
    restrict expr to domain={\thisrow{order}}{\order:\order},
    restrict expr to domain={\thisrow{k}}{\thisk:\thatk}
    ]{output_dg2.csv};
    }
    \def\expectedorder{1.5}
    \def\cc{20}   
    \addplot [black,dashed ] %
        expression [domain=0.5:\myxmin, samples=15] { \cc*x^(\expectedorder)} %
        node [pos=0.85,inner sep=0pt, anchor=south east] { { \tiny $\noexpand\mathcal{O}(h^{1.5})$}};
    \def\expectedorder{2.5}
    \def\cc{12}   
    \addplot [black,dashed ] %
        expression [domain=0.5:\myxmin, samples=15] { \cc*x^(\expectedorder)} %
        node [pos=0.85,inner sep=0pt, anchor=south east] { { \tiny $\noexpand\mathcal{O}(h^{2.5})$}};

    \def\expectedorder{3.5}
    \def\cc{10}   
    \addplot [black,dashed ] %
        expression [domain=0.5:\myxmin, samples=15] { \cc*x^(\expectedorder)} %
        node [pos=0.85,inner sep=0pt, anchor=south east] { { \tiny $\noexpand\mathcal{O}(h^{3.5})$}};

     \legend{$p=1$,$p=2$, $p=3$}
  \end{loglogaxis}
\end{tikzpicture}
%
%
  \end{subfigure}
\caption{$h$-version. Wave number $\k=2\sqrt{3}\pi$.}
\label{fig:conv_h1}  
\end{figure}
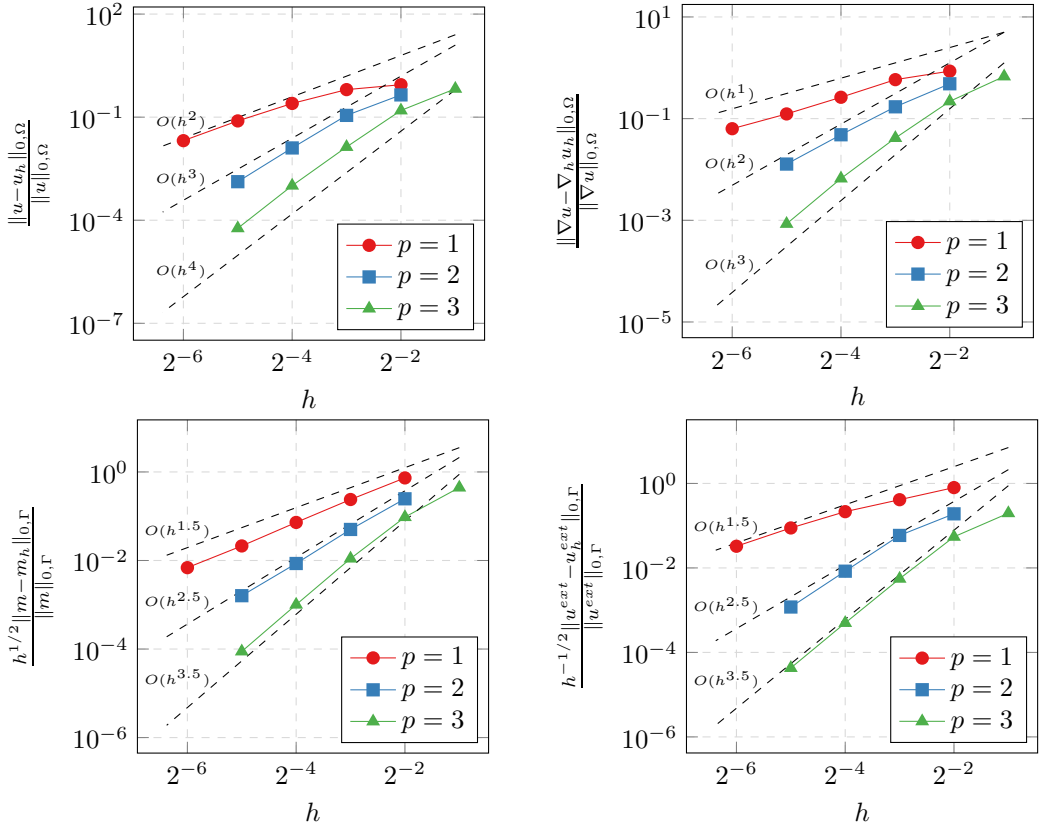

\begin{figure}[htb]
  \centering
  \begin{subfigure}{0.45\textwidth}
  %
%
%
  \begin{tikzpicture}
    \pgfplotsset{   
      cycle list/Set1,
      cycle multiindex* list={
        mark list\nextlist
        Set1\nextlist
      },
    }
  \begin{loglogaxis}[
    width=6.2cm, height=6cm,     
    grid = major,
    grid style={dashed, gray!30},
    axis background/.style={fill=white},
    xlabel=$h$,
    ylabel=$\frac{\Vert u-\uh \Vert_{0,\Omega}}{\Vert u \Vert_{0,\Omega}}$,
    legend style={legend pos=south west},
    log basis x=2,
    xtick = {1,  0.25,  0.0625, 0.015625,0.0078125,0.00390625},
    legend pos= south east
    ]
%
  %
%
    \def\thisk{5}
    \def\thatk{6}
    \def\datacol{errltwo}
    \def\orders{1,2,3}
    \ifthenelse{\thisk<9}{\def\myxmin{0.03}}{\def\myxmin{0.012}}
    \foreach \order in \orders{
    \addplot+[solid,mark size=2pt,mark options={line width=1.0pt}] table    
    [
    x expr=2^(-\thisrow{num_refines}),
    y expr=\thisrow{\datacol}*x^(0),
    col sep=comma,
    restrict expr to domain={\thisrow{order}}{\order:\order},
    restrict expr to domain={\thisrow{k}}{\thisk:\thatk}
    ]{output_dg2.csv};
    }
    \def\expectedorder{2}
    \def\cc{30}   
    \addplot [black,dashed ] %
        expression [domain=0.5:\myxmin, samples=15] { \cc*x^(\expectedorder)} %
        node [pos=0.85,inner sep=0pt, anchor=south east] { { \tiny $\noexpand\mathcal{O}(h^{2})$}};
    \def\expectedorder{3}
    \def\cc{10}   
    \addplot [black,dashed ] %
        expression [domain=0.5:\myxmin, samples=15] { \cc*x^(\expectedorder)} %
        node [pos=0.85,inner sep=0pt, anchor=south east] { { \tiny $\noexpand\mathcal{O}(h^{3})$}};

    \def\expectedorder{4}
    \def\cc{10}   
    \addplot [black,dashed ] %
        expression [domain=0.5:\myxmin, samples=15] { \cc*x^(\expectedorder)} %
        node [pos=0.85,inner sep=0pt, anchor=south east] { { \tiny $\noexpand\mathcal{O}(h^{4})$}};

     \legend{$p=1$,$p=2$, $p=3$}
  \end{loglogaxis}
\end{tikzpicture}%
%
  \begin{subfigure}{0.45\textwidth}
  %
%
%
  \begin{tikzpicture}
    \pgfplotsset{   
      cycle list/Set1,
      cycle multiindex* list={
        mark list\nextlist
        Set1\nextlist
      },
    }
  \begin{loglogaxis}[
    width=6.2cm, height=6cm,     
    grid = major,
    grid style={dashed, gray!30},
    axis background/.style={fill=white},
    xlabel=$h$,
    ylabel=$\frac{\Vert \nabla u- \nablah\uh \Vert_{0,\Omega}}{\Vert \nabla u \Vert_{0,\Omega}}$,
    legend style={legend pos=south west},
    log basis x=2,
    xtick = {1,  0.25,  0.0625, 0.015625,0.0078125,0.00390625},
    legend pos= south east
    ]
%
  %
    \def\thisk{5}
    \def\thatk{6}
    \def\datacol{errhone}
    \def\orders{1,2,3}
    \ifthenelse{\thisk<9}{\def\myxmin{0.03}}{\def\myxmin{0.012}}
    \foreach \order in \orders{
    \addplot+[solid,mark size=2pt,mark options={line width=1.0pt}] table    
    [
    x expr=2^(-\thisrow{num_refines}),
    y expr=\thisrow{\datacol}*x^(0),
    col sep=comma,
    restrict expr to domain={\thisrow{order}}{\order:\order},
    restrict expr to domain={\thisrow{k}}{\thisk:\thatk}
    ]{output_dg2.csv};
    }
    \def\expectedorder{1}
    \def\cc{4}   
    \addplot [black,dashed ] %
        expression [domain=0.5:\myxmin, samples=15] { \cc*x^(\expectedorder)} %
        node [pos=0.85,inner sep=0pt, anchor=south east] { { \tiny $\noexpand\mathcal{O}(h^{1})$}};
    \def\expectedorder{2}
    \def\cc{5}   
    \addplot [black,dashed ] %
        expression [domain=0.5:\myxmin, samples=15] { \cc*x^(\expectedorder)} %
        node [pos=0.85,inner sep=0pt, anchor=south east] { { \tiny $\noexpand\mathcal{O}(h^{2})$}};

    \def\expectedorder{3}
    \def\cc{6}   
    \addplot [black,dashed ] %
        expression [domain=0.5:\myxmin, samples=15] { \cc*x^(\expectedorder)} %
        node [pos=0.85,inner sep=0pt, anchor=south east] { { \tiny $\noexpand\mathcal{O}(h^{3})$}};

     \legend{$p=1$,$p=2$, $p=3$}
  \end{loglogaxis}
\end{tikzpicture}%
%
  \begin{subfigure}{0.45\textwidth}
  %
%
%
\begin{tikzpicture}
    \pgfplotsset{   
      cycle list/Set1,
      cycle multiindex* list={
        mark list\nextlist
        Set1\nextlist
      },
    }
  \begin{loglogaxis}[
    width=6.2cm, height=6cm,     
    grid = major,
    grid style={dashed, gray!30},
    axis background/.style={fill=white},
    xlabel=$h$,
    ylabel=$\frac{h^{1/2} \Vert \m-\mh \Vert_{0,\Gamma}}{\Vert \m \Vert_{0,\Gamma}}$,
    legend style={legend pos=south west},
    log basis x=2,
    xtick = {1,  0.25,  0.0625, 0.015625,0.0078125,0.00390625},
    legend pos= south east
    ]
%
  %
    \def\thisk{5}
    \def\thatk{6}
    \def\datacol{errm}
    \def\orders{1,2,3}
    \ifthenelse{\thisk<9}{\def\myxmin{0.03}}{\def\myxmin{0.012}}
    \foreach \order in \orders{
    \addplot+[solid,mark size=2pt,mark options={line width=1.0pt}] table    
    [
    x expr=2^(-\thisrow{num_refines}),
    y expr=\thisrow{\datacol}*x^(0.5),
    col sep=comma,
    restrict expr to domain={\thisrow{order}}{\order:\order},
    restrict expr to domain={\thisrow{k}}{\thisk:\thatk}
    ]{output_dg2.csv};
    }
    \def\expectedorder{1.5}
    \def\cc{4}   
    \addplot [black,dashed ] %
        expression [domain=0.5:\myxmin, samples=15] { \cc*x^(\expectedorder)} %
        node [pos=0.85,inner sep=0pt, anchor=south east] { { \tiny $\noexpand\mathcal{O}(h^{1.5})$}};
    \def\expectedorder{2.5}
    \def\cc{1}   
    \addplot [black,dashed ] %
        expression [domain=0.5:\myxmin, samples=15] { \cc*x^(\expectedorder)} %
        node [pos=0.85,inner sep=0pt, anchor=south east] { { \tiny $\noexpand\mathcal{O}(h^{2.5})$}};

    \def\expectedorder{3.5}
    \def\cc{1}   
    \addplot [black,dashed ] %
        expression [domain=0.5:\myxmin, samples=15] { \cc*x^(\expectedorder)} %
        node [pos=0.85,inner sep=0pt, anchor=south east] { { \tiny $\noexpand\mathcal{O}(h^{3.5})$}};

     \legend{$p=1$,$p=2$, $p=3$}
  \end{loglogaxis}%
\end{tikzpicture}%
%
%
  \begin{subfigure}{0.45\textwidth}
  %
%
%
\begin{tikzpicture}
    \pgfplotsset{   
      cycle list/Set1,
      cycle multiindex* list={
        mark list\nextlist
        Set1\nextlist
      },
    }
  \begin{loglogaxis}[
    width=6.2cm, height=6cm,     
    grid = major,
    grid style={dashed, gray!30},
    axis background/.style={fill=white},
    xlabel=$h$,
    ylabel=$\frac{h^{-1/2} \Vert \uext-\uhext \Vert_{0,\Gamma}}{\Vert \uext \Vert_{0,\Gamma}}$,
    legend style={legend pos=south west},
    log basis x=2,
    xtick = {1,  0.25,  0.0625, 0.015625,0.0078125,0.00390625},
    legend pos= south east
    ]
%
  %
    %
    \def\thisk{5}
    \def\thatk{6}
    \def\datacol{errphi}
    \def\orders{1,2,3}
    \ifthenelse{\thisk<9}{\def\myxmin{0.03}}{\def\myxmin{0.012}}
    \foreach \order in \orders{
    \addplot+[solid,mark size=2pt,mark options={line width=1.0pt}] table    
    [
    x expr=2^(-\thisrow{num_refines}),
    y expr=\thisrow{\datacol}*x^(-0.5),
    col sep=comma,
    restrict expr to domain={\thisrow{order}}{\order:\order},
    restrict expr to domain={\thisrow{k}}{\thisk:\thatk}
    ]{output_dg2.csv};
    }
    \def\expectedorder{1.5}
    \def\cc{6}   
    \addplot [black,dashed ] %
        expression [domain=0.5:\myxmin, samples=15] { \cc*x^(\expectedorder)} %
        node [pos=0.75,inner sep=0pt, anchor=south east] { { \tiny $\noexpand\mathcal{O}(h^{1.5})$}};
    \def\expectedorder{2.5}
    \def\cc{2}   
    \addplot [black,dashed ] %
        expression [domain=0.5:\myxmin, samples=15] { \cc*x^(\expectedorder)} %
        node [pos=0.85,inner sep=0pt, anchor=south east] { { \tiny $\noexpand\mathcal{O}(h^{2.5})$}};

    \def\expectedorder{3.5}
    \def\cc{1}   
    \addplot [black,dashed ] %
        expression [domain=0.5:\myxmin, samples=15] { \cc*x^(\expectedorder)} %
        node [pos=0.85,inner sep=0pt, anchor=south east] { { \tiny $\noexpand\mathcal{O}(h^{3.5})$}};

     \legend{$p=1$,$p=2$, $p=3$}
  \end{loglogaxis}
\end{tikzpicture}%
%
%
\caption{$h$-version. Wave number $\k=\sqrt{3}\pi$.}
\label{fig:conv_h2}  
\end{figure}

We also considered  the $p$-version of the method with wave numbers~$\k:=4\sqrt{3}{\pi}\sim 21.7$ and~$\k:=2\sqrt{3}{\pi} \sim 10.88$.
We fixed an underlying uniform mesh of size $h\approx 1/4$ and considered the exact solution as in~\eqref{exact:solution}.
For both wave numbers, we observe exponential convergence after a small preasymptotic regime;
see Figure~\ref{fig:conv_p}.

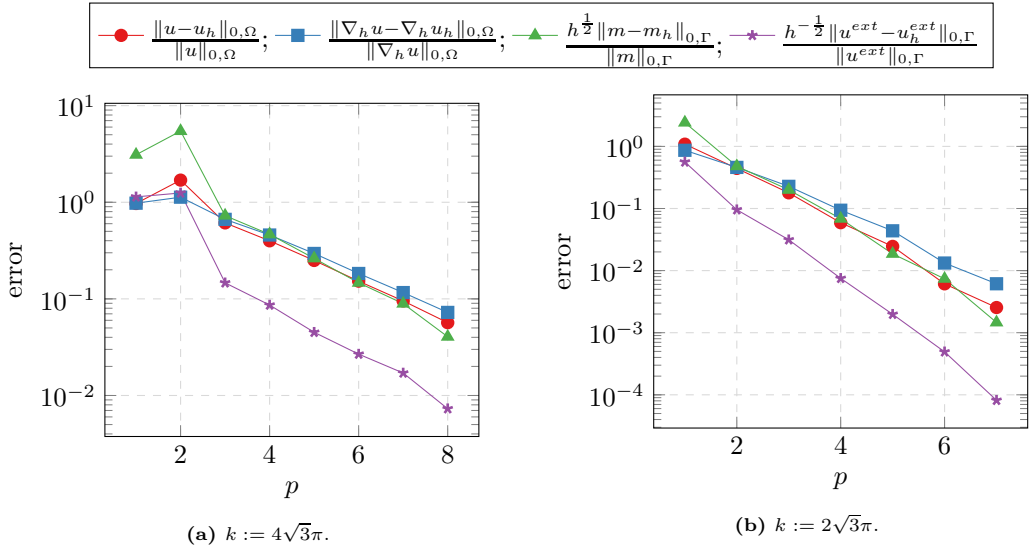
\begin{figure}[htb]
  \centering
  \quad\ref{thatlegend} \vspace{2mm}\\
  \begin{subfigure}{0.45\textwidth}
\begin{tikzpicture}
    \pgfplotsset{   
      cycle list/Set1,
      cycle multiindex* list={
        mark list\nextlist
        Set1\nextlist
      },
    }

  \begin{semilogyaxis}[
    width=6.5cm, height=6cm,     
    grid = major,
    grid style={dashed, gray!30},
    axis background/.style={fill=white},
    xlabel=$p$,
    ylabel=error,
    legend style={legend pos=south west},
    legend pos= outer north east,
    legend to name=thatlegend,
    legend columns=-1,
    legend entries={$\frac{\|{u-\uh}\|_{0,\Omega}}{\|u\|_{0,\Omega}}$;,
      $\frac{\|{\nablah u-\nablah \uh}\|_{0,\Omega}}{\|\nablah u\|_{0,\Omega}}$;,
      $\frac{\h^{\frac12} \|{\m -\mh\|}_{0,\Gamma}}{\|\m\|_{0,\Gamma}}$;,
      $\frac{\h^{-\frac12} \|{\uext - \uhext}\|_{0,\Gamma}}{\|\uext\|_{0,\Gamma}}$},
    ]
    \def\thisk{20}
    \def\thatk{23}
 
    \addplot+[solid,mark size=2pt,mark options={line width=1.0pt}] table    
    [
    x expr=\thisrow{order},
    y expr=\thisrow{errltwo},
    col sep=comma,
    restrict expr to domain={\thisrow{num_refines}}{2:2},
    restrict expr to domain={\thisrow{k}}{\thisk:\thatk}
    ]{output_dg_p.csv};

    \addplot+[solid,mark size=2pt,mark options={line width=1.0pt}] table    
    [
    x expr=\thisrow{order},
    y expr=\thisrow{errhone},
    col sep=comma,
    restrict expr to domain={\thisrow{num_refines}}{2:2},
    restrict expr to domain={\thisrow{k}}{\thisk:\thatk}
    ]{output_dg_p.csv};

    \addplot+[solid,mark size=2pt,mark options={line width=1.0pt}] table    
    [
    x expr=\thisrow{order},
    y expr=\thisrow{errm},
    col sep=comma,
    restrict expr to domain={\thisrow{num_refines}}{2:2},
    restrict expr to domain={\thisrow{k}}{\thisk:\thatk}
    ]{output_dg_p.csv};

    \addplot+[solid,mark size=2pt,mark options={line width=1.0pt}] table    
    [
    x expr=\thisrow{order},
    y expr=\thisrow{errphi},
    col sep=comma,
    restrict expr to domain={\thisrow{num_refines}}{2:2},
    restrict expr to domain={\thisrow{k}}{\thisk:\thatk}
    ]{output_dg_p.csv};
  \end{semilogyaxis}
\end{tikzpicture}
\caption{$\k:=4\sqrt{3}{\pi}$.}
\end{subfigure}
\quad
\begin{subfigure}{0.45\textwidth}
  \begin{tikzpicture}
    \pgfplotsset{   
      cycle list/Set1,
      cycle multiindex* list={
        mark list\nextlist
        Set1\nextlist
      },
    }

  \begin{semilogyaxis}[
    width=6.5cm, height=6cm,     
    grid = major,
    grid style={dashed, gray!30},
    axis background/.style={fill=white},
    xlabel=$p$,
    ylabel=error,
    legend pos= outer north east,
    ]
    \def\thisk{9}
    \def\thatk{11}
    \addplot+[solid,mark size=2pt,mark options={line width=1.0pt}] table    
    [
    x expr=\thisrow{order},
    y expr=\thisrow{errltwo},
    col sep=comma,
    restrict expr to domain={\thisrow{num_refines}}{2:2},
    restrict expr to domain={\thisrow{k}}{\thisk:\thatk}
    ]{output_dg_p.csv};

    \addplot+[solid,mark size=2pt,mark options={line width=1.0pt}] table    
    [
    x expr=\thisrow{order},
    y expr=\thisrow{errhone},
    col sep=comma,
    restrict expr to domain={\thisrow{num_refines}}{2:2},
    restrict expr to domain={\thisrow{k}}{\thisk:\thatk}
    ]{output_dg_p.csv};

    \addplot+[solid,mark size=2pt,mark options={line width=1.0pt}] table    
    [
    x expr=\thisrow{order},
    y expr=\thisrow{errm},
    col sep=comma,
    restrict expr to domain={\thisrow{num_refines}}{2:2},
    restrict expr to domain={\thisrow{k}}{\thisk:\thatk}
    ]{output_dg_p.csv};

        \addplot+[solid,mark size=2pt,mark options={line width=1.0pt}] table    
    [
    x expr=\thisrow{order},
    y expr=\thisrow{errphi},
    col sep=comma,
    restrict expr to domain={\thisrow{num_refines}}{2:2},
    restrict expr to domain={\thisrow{k}}{\thisk:\thatk}
    ]{output_dg_p.csv};

  \end{semilogyaxis}
\end{tikzpicture}
\caption{$\k:=2\sqrt{3}{\pi}$.}
\end{subfigure}
  \caption{$p$-version.}
  \label{fig:conv_p}  
\end{figure}

\section{Conclusions} \label{section:conclusions}

We introduced a \DGFEMBEM{} mortar coupling for three dimensional Helmholtz problems
with variable coefficients.
Upon showing that the discrete sesquilinear form satisfies a G{\aa}rding inequality and continuity bounds,
we showed quasi-optimality of the $\h$- and $\p$-versions of the scheme.
The theoretical results are validated by numerical examples.
Notably, theoretical and numerical results are valid regardless of whether the wave number is a Dirichlet or Neumann Laplace eigenvalue.

As a pivot result of independent interest, we constructed a discontinuous-to-continuous reconstruction operator on tetrahedral meshes, with optimal $\h$- and $\p$-stability properties in the $H^1$ seminorm and in the $L^2$ norm, covering the case of curvilinear meshes.

\section*{Acknowledgements}
JMM, IP, and AR gratefully acknowledge funding by the Austrian Science Fund (FWF) through the project F~65 ``Taming Complexity in Partial Differential System''.
IP and AR also acknowledge funding by the FWF through the project P~29197-N32.
LM and AR acknowledge support from the FWF project P33477.

{\footnotesize
\bibliography{bibliogr}
}
\bibliographystyle{plain}

\appendix

\section{Consistency of method~\eqref{dGBEM:long-version}} \label{appendix:sectionA}

\begin{proof}[Proof of Lemma~\ref{lemma:consistency}]
Proving assertion~\eqref{dgBEM:consistency} is equivalent to proving that
the continuous solution $(u,\m,\uext)$ solves also the three equations in~\eqref{dGBEM:long-version}.
Since $u\in H^{\frac{3}{2}+\tepsilon}(\Omega)$ we have that
\begin{align}  \label{eq:jumpmeanzero}
 \llbracket u \rrbracket=0, \qquad \llbracket \nabla u\rrbracket=0, \qquad\ldc\nabla u\rdc=\nabla u  \qquad \text{on }\FhI.
\end{align} 
We multiply \eqref{first:system:Helmholtz} by $\overline{\vh}\in\Vh$ and integrate elementwise by parts to get
\begin{align*}
 \sum_{\E \in \taun} \left ( -\int_{\partial \E} \Ad\nabla u \cdot \nGamma \overline{\vh} 
 +\int_{\E}\Ad\,\nabla u \cdot \overline{\nabla \vh}\right) 
 -\int_\Omega  (\kn)^2 u \overline{\vh} =\int_\Omega f\overline{\vh} .
\end{align*}
With the aid of the boundary condition in~\eqref{first:system:Helmholtz},
inserting the parameter~$\delta$, and using the fact that $\Ad=1$ on $\Gamma$,
 we manipulate the boundary term as follows:
\begin{align*}
&  -\sum_{\E \in \taun} \int_{\partial \E} \Ad\nabla u \cdot \nGamma \overline{\vh} \\
& = -\int_{\Gamma}\delta \nabla u\cdot\nGamma \overline{\vh}  -\int_{\Gamma}(1-\delta) \m \overline{\vh}
  +\int_{\Gamma}  \i\k (1-\delta)u \overline{\vh} 
  -\int_{\FhI}\Ad \nabla u \cdot \llbracket \overline{\vh}\rrbracket \\
& \quad +\int_{\Gamma}(\i\k)^{-1}\delta \m \overline{\nabla \vh\cdot\nGamma}
  -\int_{\Gamma}(\i\k)^{-1}\delta \nabla u\cdot\nGamma \overline{\nabla \vh\cdot\nGamma}
  -\int_{\Gamma}\delta u \overline{\nabla \vh\cdot\nGamma} .
\end{align*}
Properties~\eqref{eq:jumpmeanzero} and the above identity lead to the consistency of the first equation of~\eqref{dGBEM:long-version}, i.e.,
\begin{align*}
 {\sum_{\E \in \taun} \ahE (u,\vh) + \bhGamma (u,\vh) - (\m, \delta (\i\k)^{-1} \nabla_h \vh \cdot \nGamma + (1-\delta)\vh)_{0,\Gamma} = (\f,\vh)_{0,\Omega}} \qquad \forall \vh \in \Vh.
\end{align*}
To show the consistency of the second equation of~\eqref{dGBEM:long-version},
we multiply~\eqref{formula:Chandler-Wilde}, which is an equivalent formulation of~\eqref{second:system:Helmholtz},
by $\overline{\vexth}\in\Zh$ and integrate over $\Gamma$:
\begin{align*}
 \langle (\Bk + \i\k \Aprimek) \uext - \Aprimek \m, \vexth \rangle =0 \qquad \forall \vexth \in \Zh.
\end{align*}
Eventually, multiplying~\eqref{third:system:Helmholtz} by $\overline{\lambdah}\in\Wh$ and integrating over~$\Gamma$, we get
\begin{align*}
 \langle u , \lambdah\rangle - \langle ( \frac{1}{2} + \Kk  ) \uext - \Vk (\m - i\k \uext), \lambdah\rangle = 0.
\end{align*}
Similarly as above, the boundary condition in~\eqref{first:system:Helmholtz} leads to
\begin{align*}
\langle- \delta(\i\k)^{-1}\nabla u\cdot\nGamma,\lambdah\rangle +\langle -\delta u,\lambdah\rangle + \langle\delta(\i\k)^{-1}m,\lambdah\rangle=0.
\end{align*}
Summing up the last two equations shows the consistency of the third equation in~\eqref{dGBEM:long-version}.
\end{proof}

\section{An $\h\p$-stable, discontinuous-to-continuous reconstruction operator on curvilinear simplicial meshes} \label{appendix:proof-Lemma-splitting}

Here, we prove Theorem~\ref{lemma:Karakashian-style}.

Let the mesh~$\taun$ satisfy the shape regularity assumption~\eqref{eq:SR} and~$v \in \Hpw^1(\taun)$.
We construct the operator~$\Pcal:\Hpw^1(\taun) \rightarrow H^1(\Omega)$ as the composition~$\Pcal:= \Pcalt \circ \Pcalo$ of two operators 
$\Pcalt$, $\Pcalo$ that we define below.
Preliminarily, for each~$\E \in \taun$, we construct a
quasi-uniform, shape regular simplicial decomposition~$\tautildenE$ of~$\E$, such that the size of each 
element~$\Etilde$ of~$\tautildenE$ is comparable to~$\htilde_\E := \hE/\ell^2$.
Denote the union of all~$\tautildenE$ by~$\tautilden$.
By using a standard refinement strategy on the original mesh, we can additionally ensure that $\tautilden$ does not contain hanging nodes.
We also introduce
\begin{equation}
\Vtildeh:=\{v \in S^{1,0}(\Omega,\tautilden)\,|\, v{}_{|_K} \in S^{1,1}(\E,\tautildenE) \quad \forall K \in \taun\},
\end{equation}
the space of the mapped, piecewise linear polynomials over~$\tautilden$, which are continuous in each~$\E \in \taun$ but possibly discontinuous at the interfaces of~$\taun$.

We define~$\Pcalo: \Hpw^1(\taun) \rightarrow \Vtildeh$ as follows.
For each~$\E\in \taun$, $\Pcalo (\vh{}_{|\E})\in {\cal S}^{1,1}(K, \tautildenE)$ is the
quasi-interpolant of $v$ defined in~\cite[Sec.~4]{bernardi_optimal_interpolation}.
As for $\Pcalt: \Vtildeh \rightarrow {\cal S}^{1,1}(\Omega, \tautilden)\subset H^1(\Omega)$, we choose
the lowest-order, Oswald-type operator introduced by Karakashian and
Pascal in~\cite{KarakashianPascal}.
This operator interpolates the arithmetical averages of the degrees of freedom at each vertex of the mesh $\tautilden$.
Thus, we are actually going to prove Theorem~\ref{lemma:Karakashian-style}
with $\Pcal:\Hpw^1(\taun) \rightarrow {\cal S}^{1,1}(\Omega, \tautilden)\subset H^1(\Omega)$.
For simplicity, throughout this section we assume that $\h/{\ell^2} \lesssim 1$ and $\ell \in \Nbb$. The
other cases follow similarly but would incur some cumbersome notation/case distinctions.

Before proving~\eqref{K-propertyA}--\eqref{K-propertyC}, we recall two propositions, which summarize the properties of
the operators~$\Pcalo$ and $\Pcalt$.
\begin{prop}
For any element $K \in \taun$, the quasi-interpolant $\Pcalo:\Hpw^1(\taun)\rightarrow \Vtildeh$ satisfies the following estimates:
\begin{align}
\Vert \nabla \Pcalo v \Vert_{0,\E} & \lesssim \Vert \nabla v \Vert_{0,\E} ,	\label{approximation:SZ1} \\
\Vert v -\Pcalo v \Vert_{0,K} & \lesssim \Vert \msf \ell^{-2} \nabla v \Vert_{0,\E}, \label{approximation:SZ2} \\
 \Vert \llbracket \Pcalo v \rrbracket \Vert_{0,\partial \E \setminus \Gamma} &\lesssim  \Vert \llbracket v \rrbracket \Vert_{0,\partial \E\setminus \Gamma}+ \Vert \msf^{1/2}\ell^{-1} \nabla_h v \Vert_{0,\omega_K},\label{approximation:SZ3}
\end{align}
where $\omega_K$ in~\eqref{approximation:SZ3} denotes the set of elements sharing a face with~$K$.
\end{prop}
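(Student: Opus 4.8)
The plan is to reduce all three estimates to the standard local stability and approximation properties of the Bernardi quasi-interpolant on the fine submesh $\tautildenE$ of each macro-element~$\E$, and then to assemble the jump bound~\eqref{approximation:SZ3} from a scaled trace inequality. Since $\Pcalo$ is defined elementwise, it suffices to argue on a single $\E\in\taun$, whose fine elements $\Etilde\in\tautildenE$ have diameter comparable to $\htilde_\E=\hE/\ell^2=\msf\,\ell^{-2}$ on~$\E$. Pulling back through the element map $\Phi_\E$ and invoking the decomposition $\Phi_\E=\Phi_\E^\Delta+\Psi_\E$ together with~\eqref{eq:SR} and~\eqref{eq:Bernardi}, the curved geometry is a uniformly bounded perturbation of the affine one, so that every generic constant produced by the standard theory depends only on $\gamma_{SR}$ and the shape regularity of~$\tautildenE$.

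First I would record the two local properties of the Bernardi quasi-interpolant on the fine mesh that are available from~\cite{bernardi_optimal_interpolation}: for every fine element $\Etilde$ one has the local $H^1$-stability $\Vert\nabla\Pcalo v\Vert_{0,\Etilde}\lesssim\Vert\nabla v\Vert_{0,\omega_{\Etilde}}$ and the local approximation estimate $\Vert v-\Pcalo v\Vert_{0,\Etilde}\lesssim \htilde_\E\Vert\nabla v\Vert_{0,\omega_{\Etilde}}$, where $\omega_{\Etilde}$ denotes the patch of fine elements around $\Etilde$. Squaring and summing these bounds over all $\Etilde\subset\E$, with the finite overlap of the patches controlled by shape regularity, immediately yields~\eqref{approximation:SZ1} and~\eqref{approximation:SZ2}, the factor $\htilde_\E=\msf\,\ell^{-2}$ producing exactly the weight appearing in~\eqref{approximation:SZ2}.

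The core of the argument is~\eqref{approximation:SZ3}. On an interior face $F\subset\partial\E\setminus\Gamma$ shared with a neighbor $\E'$, I would split the jump as
\[
\llbracket \Pcalo v\rrbracket\big|_F
=(\Pcalo v-v)\big|_{\E}\,\n_\E
+(\Pcalo v-v)\big|_{\E'}\,\n_{\E'}
+\llbracket v\rrbracket\big|_F ,
\]
so that the jump of the reconstruction is controlled by the jump of $v$ plus the face-traces of the two elementwise interpolation errors. For each such trace I would apply the scaled trace inequality $\Vert w\Vert_{0,F\cap\partial\Etilde}^2\lesssim\htilde_\E^{-1}\Vert w\Vert_{0,\Etilde}^2+\htilde_\E\Vert\nabla w\Vert_{0,\Etilde}^2$ on the fine elements $\Etilde$ touching $F$, with $w=v-\Pcalo v$, and then insert the local approximation and stability bounds from the previous step. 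This gives $\Vert(\Pcalo v-v)|_\E\Vert_{0,F}\lesssim\htilde_\E^{1/2}\Vert\nabla v\Vert_{0,\omega_\E}=\msf^{1/2}\ell^{-1}\Vert\nabla v\Vert_{0,\omega_\E}$, and summing over the faces of~$\E$, with the contributions of~$\E$ and its neighbors collected into~$\omega_\E$, delivers~\eqref{approximation:SZ3}.

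The hard part will be making the geometric reduction fully rigorous on the curved macro-elements: one must check that the fine decomposition $\tautildenE$ can be chosen quasi-uniform and shape regular at the prescribed scale $\htilde_\E$ uniformly in~$\E$, that the flatness of the faces in $\FhI$ is preserved so that the trace inequality is applied on genuine subfaces, and that the constants in the Bernardi estimates and in the trace inequality are independent of $\hE$, $\ell$, and the curvature. Once these scaling and uniformity issues are settled---essentially a bookkeeping argument resting on~\eqref{eq:SR}--\eqref{eq:Bernardi}---the three estimates follow as above.
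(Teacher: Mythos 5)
Your proposal is correct and follows essentially the same route as the paper: \eqref{approximation:SZ1}--\eqref{approximation:SZ2} are reduced to the Bernardi quasi-interpolation estimates on the fine submesh $\tautildenE$ (whose shape regularity at scale $\htilde_\E=\hE/\ell^2$ the paper secures via~\eqref{eq:SR} and~\eqref{eq:Bernardi}), and \eqref{approximation:SZ3} uses the identical triangle-inequality splitting of $\llbracket \Pcalo v\rrbracket$ into $\llbracket v\rrbracket$ plus the two elementwise interpolation-error traces, controlled by a trace estimate whose dominant term produces the weight $\msf^{1/2}\ell^{-1}$. The only cosmetic difference is that you apply a scaled trace inequality on the fine elements, whereas the paper pulls back to the reference macro-element and uses a multiplicative trace estimate combined with \eqref{approximation:SZ1}--\eqref{approximation:SZ2}; both yield the same bound.
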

\begin{proof}
Bounds~\eqref{approximation:SZ1} and~\eqref{approximation:SZ2} follow from~\cite[Thm.~{4.1}]{bernardi_optimal_interpolation} locally on~$K$ as the domain to obtain a function on the subtriangulation~$\tautildenE$.
We can apply~\cite[Thm.~{4.1}]{bernardi_optimal_interpolation} since~$\tautildenE$ fulfills~\eqref{eq:SR} and 
thus~\eqref{eq:Bernardi}, which is the condition required there.
  
To show~\eqref{approximation:SZ3}, we fix a facet~$F$ shared by the elements~$K$ and~$K'$. We get
\begin{align*}
    \Vert \llbracket \Pcalo v \rrbracket \Vert_{0,\F}
    &\leq \Vert \llbracket  v \rrbracket \Vert_{0,\F} + \Vert \llbracket v - \Pcalo v \rrbracket \Vert_{0,F} \\
    & \leq \Vert \llbracket v \rrbracket \Vert_{0,\F}
      + \Vert  \big(v - \Pcalo v \big)_{|K}  \Vert_{0,F} + \Vert  \big(v - \Pcalo v \big)_{|\E'}  \Vert_{0,\F}.
\end{align*}
For brevity, we only consider the third term on the right-hand side. Transforming to the reference element, applying a multiplicative trace estimate and transforming back gives
\begin{align*}
\Vert  \big(v - \Pcalo v \big)_{|\E'}  \Vert_{0,F}
&\lesssim \Vert  \msf^{-1/2} (v - \Pcalo v) \Vert_{0,\E'} +  \Vert v - \Pcalo v \Vert_{0,\E'}^{1/2} \Vert \nabla (v - \Pcalo v) \Vert_{0,K'}^{1/2}.
\end{align*}
Inserting~\eqref{approximation:SZ1} and~\eqref{approximation:SZ2} yields~\eqref{approximation:SZ3}.
\end{proof}
\begin{prop}
The Oswald-type operator~$\Pcalt: \Vtildeh \to  {\cal S}^{1,1}(\Omega, \tautilden)$ satisfies the following properties:
\begin{align}  \label{eq:Burman-Ern-01}
\Vert  \widetilde{v}_h - \Pcalt \widetilde{v}_h \Vert _{0,\Omega}
\lesssim \Vert \msf^{1/2} \ell^{-1}\llbracket \widetilde{v}_h \rrbracket \Vert_{0,\FhI},\quad \Vert \nablah( \widetilde{v}_h - \Pcalt \widetilde{v}_h )\Vert _{0,\Omega}
\lesssim \Vert \msf^{-1/2} \ell \llbracket \widetilde{v}_h   \rrbracket \Vert_{0,\FhI}.
\end{align}
\end{prop}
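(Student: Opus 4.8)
The plan is to reduce the two bounds to the classical estimates for the lowest-order Oswald averaging operator of Karakashian and Pascal~\cite{KarakashianPascal}, but applied on the fine triangulation~$\tautilden$ instead of on~$\taun$, and then to make the mesh-size scaling explicit. The basic observation is that $\widetilde{v}_h$ and $\Pcalt\widetilde{v}_h$ are both piecewise affine on~$\tautilden$, so the difference $w:=\widetilde{v}_h-\Pcalt\widetilde{v}_h$ is piecewise affine on~$\tautilden$ as well and is fully determined by its values at the vertices of~$\tautilden$. The whole argument is therefore a nodal-value computation combined with standard scaling.

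First I would record the elementary scaling relations on the fine mesh. Since each fine element~$\Etilde\in\tautilden$ is shape regular with diameter $\htilde_\Etilde\simeq\hE/\ell^2$, for piecewise affine functions one has the norm equivalence $\Vert w\Vert_{0,\Etilde}^2\simeq\htilde_\Etilde^{3}\sum_{z}\vert w(z)\vert^2$ (with $z$ ranging over the vertices of~$\Etilde$), the inverse inequality $\Vert\nabla w\Vert_{0,\Etilde}\lesssim\htilde_\Etilde^{-1}\Vert w\Vert_{0,\Etilde}$, and, on a fine facet~$F'$ of a coarse face $F\in\FhI$, the trace-type equivalence $\Vert\llbracket\widetilde{v}_h\rrbracket\Vert_{0,F'}^2\simeq\htilde^{2}\sum_{z\in F'}\vert\llbracket\widetilde{v}_h\rrbracket(z)\vert^2$.

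The key structural step is to locate the support of the nodal differences. Because $\widetilde{v}_h\in\Vtildeh$ is continuous inside each coarse element~$K\in\taun$, its only jumps across fine faces occur on the coarse interfaces~$\FhI$. Hence $\Pcalt$ leaves the nodal value unchanged, $w(z)=0$, at every fine vertex lying in the interior of some coarse element; and at a fine vertex~$z$ on~$\FhI$ the value $w(z)$ is the difference between a local value of~$\widetilde{v}_h$ and the arithmetic average taken by~$\Pcalt$, which, by shape regularity (only $O(1)$ elements meet at each vertex), is bounded by the jumps $\vert\llbracket\widetilde{v}_h\rrbracket(z)\vert$ over the coarse faces meeting at~$z$. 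Summing the norm equivalence over fine elements and using finite overlap gives $\Vert w\Vert_{0,\Omega}^2\lesssim\sum_{z\in\FhI}\htilde^{3}\vert\llbracket\widetilde{v}_h\rrbracket(z)\vert^2$; writing $\htilde^{3}=\htilde\cdot\htilde^{2}$ and converting the nodal sum back into a face integral yields $\Vert w\Vert_{0,\Omega}^2\lesssim\htilde\,\Vert\llbracket\widetilde{v}_h\rrbracket\Vert_{0,\FhI}^2$, which is the first estimate since $\htilde^{1/2}\simeq\msf^{1/2}\ell^{-1}$. For the gradient I would instead use the inverse inequality to obtain $\Vert\nabla w\Vert_{0,\Omega}^2\lesssim\sum_{\Etilde}\htilde^{-2}\Vert w\Vert_{0,\Etilde}^2\lesssim\htilde^{-1}\Vert\llbracket\widetilde{v}_h\rrbracket\Vert_{0,\FhI}^2$, and $\htilde^{-1/2}\simeq\msf^{-1/2}\ell$ gives the second estimate.

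The main obstacle here is bookkeeping rather than any deep difficulty: one must verify that the fine-mesh jumps of~$\widetilde{v}_h$ are supported exactly on~$\FhI$, and carefully match the local scaling $\htilde\simeq\msf/\ell^2$ on each coarse face (recalling that $\msf=\min\{h_{K_1},h_{K_2}\}$ there) so that the factors $\htilde^{\pm1/2}$ turn precisely into the weights $\msf^{1/2}\ell^{-1}$ and $\msf^{-1/2}\ell$ appearing in the statement. It also has to be checked that the Karakashian--Pascal bounds apply uniformly on~$\tautilden$; this is justified because~$\tautilden$ is shape regular and, by the construction earlier in this appendix, free of hanging nodes, and these are the only properties the estimates of~\cite{KarakashianPascal} require.
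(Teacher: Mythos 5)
Your proof is correct and takes essentially the same route as the paper: the paper's proof simply cites the nodal-averaging-plus-scaling argument of Karakashian and Pascal on the fine mesh $\tautilden$, combined with exactly the two observations you make explicit, namely that $\widetilde v_h$ jumps only across the coarse interfaces $\FhI$ (so no extra fine-mesh jump terms arise) and that $\htilde_\E = \hE/\ell^2$ converts the fine-mesh weights into $\msf^{1/2}\ell^{-1}$ and $\msf^{-1/2}\ell$. The only cosmetic difference is that you derive the gradient bound from the $L^2$ bound via an inverse inequality on the fine elements, whereas the paper invokes the $H^1$-seminorm estimate of Karakashian--Pascal directly; these are interchangeable since that estimate is itself proved by the same nodal scaling.
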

  \begin{proof}
We claim that
\[
\Vert  \widetilde{v}_h - \Pcalt \widetilde{v}_h \Vert _{0,\Omega}^2
\lesssim   \sum_{\E \in \taun} {\Vert \htilde_{\E}^{1/2}\llbracket \widetilde{v}_h
    \rrbracket \Vert_{0,\partial \E\setminus \Gamma}^{2}} ,\quad
\Vert  \nablah(\widetilde{v}_h - \Pcalt \widetilde{v}_h) \Vert _{0,\Omega}^2
      \lesssim   \sum_{\E \in \taun} {\Vert \htilde_{\E}^{-1/2}\llbracket \widetilde{v}_h  \rrbracket \Vert_{0,\partial \E\setminus\Gamma}^{2}.
}
\]
This follows as in the proof of~\cite[Thm.~{2.2}]{KarakashianPascal}, which only makes use of the definition of the Lagrangian degrees of freedom of $\Pcalt \widetilde{v}_h$ as arithmetical averages of the degrees of freedom of $\widetilde{v}_h$ and of the scaling properties of the basis functions.
We remark that~\cite[Thm.~{2.2}]{KarakashianPascal} states the estimate in the $H^1$ seminorm; the estimate in the $L^2$ norm follows along the same lines; 
see also~\cite[Lemma~{5.3}]{MR2299768}.
Then, the estimates in~\eqref{eq:Burman-Ern-01} follow from the definition of $\htilde_\E = \hE/\ell^2$ and the fact that function~$\widetilde{v}_h$ is continuous within each element~$K \in \taun$,
i.e., no extra jumps are introduced along the edges of the refined triangulation~$\tautilden$.
\end{proof}

As an immediate consequence of the shape regularity of~$\Omegah$ and the locality of the operator~$\Pcalo$, we get
\begin{align}\label{Burman-Ern-global}
  \Vert \msf^{-1} \ell^2\big(\Pcalo v - \Pcalt (\Pcalo v) \big) \Vert _{0,\Omega}
  & \overset{\eqref{eq:Burman-Ern-01}}{\lesssim} \Vert \msf^{-{1/2}}
    \ell \llbracket  \Pcalo v \rrbracket \Vert_{0,
    \FhI}\overset{\eqref{approximation:SZ3}}{\lesssim} \Vert
    \msf^{-{1/2}} \ell \llbracket v \rrbracket \Vert_{0, \FhI} +
    \Vert \nabla_\h v \Vert_{0,\Omega}.
\end{align}
We prove further properties of the operator~$\Pcalt$.
First, proceeding as in Remark~\ref{remark:choice-dG-parameters}, we have the following inverse estimate for mapped, affine functions:
\begin{equation} \label{inverse:h1-L2-polynomial}
\Vert \nabla q \Vert_{0,\Etilde} \lesssim \htilde_{\E}^{-1} \Vert q \Vert_{0,\Etilde} =   \Vert  \msf^{-1}\ell^2 q \Vert_{0,\Etilde}
\quad \forall \Etilde\in \tautildenE,\; \forall q \in \mathcal S^{1,1}(K, \tautildenE).
\end{equation}
Next, we observe that
\begin{equation} \label{Burman-Ern-2}
\begin{split}
\Vert & \nablah \Pcalt(\Pcalo v) \Vert_{0,\Omega} \le \Vert \nablah(\Pcalo v) \Vert_{0,\Omega} + \Vert \nablah(\Pcalo v - \Pcalt(\Pcalo v)) \Vert_{0,\Omega} \\
& \overset{\eqref{approximation:SZ1}}{\lesssim} \Vert \nablah v \Vert_{0,\Omega} + \Vert \nablah(\Pcalo v - \Pcalt(\Pcalo v)) \Vert_{0,\Omega}
\overset{\eqref{inverse:h1-L2-polynomial}}{\lesssim} \Vert \nablah v \Vert_{0,\Omega} + \Vert \msf^{-1}\ell^2 \big( \Pcalo v - \Pcalt(\Pcalo v) \big) \Vert_{0,\Omega} \\
& \overset{\eqref{Burman-Ern-global}}{\lesssim} \Vert \nablah v \Vert_{0,\Omega} + \Vert  \msf^{-1/2}\ell \llbracket v \rrbracket \Vert_{0,\FhI}.
\end{split}
\end{equation}
From this and the triangle inequality, we get~\eqref{K-propertyA}.

In order to prove~\eqref{K-propertyB}, we observe that the following
approximation property of the operator~$\Pcalt$ is valid:
\begin{equation} \label{Burman-Ern-3}
\begin{split}
\Vert v - \Pcalt(\Pcalo v) \Vert _{0,\Omega}
& \quad\;\; \le  \Vert v - \Pcalo v \Vert _{0,\Omega} + \Vert \Pcalo v - \Pcalt(\Pcalo v) \Vert _{0,\Omega}\\
& \overset{\eqref{approximation:SZ2},\eqref{Burman-Ern-global}}{\lesssim}
\Vert \msf \ell^{-2}\nablah v \Vert_{0,\Omega}  +    \Vert \msf^{1/2} \ell^{-1} \llbracket v \rrbracket \Vert_{0, \FhI}.
\end{split}
\end{equation}
Then, \eqref{K-propertyB} follows by the triangle inequality.

We are left to prove~\eqref{K-propertyC}. To that end, we use a scaling argument.
Given $v \in \Hpw^1(\taun)$, for any $\E\in\taun$, let $\widehat v$ be the polynomial pull-back of ${v}_{|_{\E}}$ through the mapping $\Phi_K: \Ehat\to \E$.
We denote the counterparts of  $\Pcalo$ and~$\Pcalt$ acting on the polynomials on $\Ehat$ by~$\Pcalohat$ and~$\Pcalthat$, respectively.
For any boundary face $\F \in \FhB$, we denote the pull-back of~$\F$ through~$\PhiE$ by~$\Fhat$, where~$\E$ is the only element such that $\F\subset\partial\E$.
For all~$\F \in\FhB$, we apply a scaling argument, the multiplicative trace inequality, and the Young inequality to get
\begin{align*}
\Vert v 	 - \Pcalt(\Pcalo v) \Vert_{0,\F}^2 & \lesssim \Vert \msf (\widehat v - \Pcalthat(\Pcalohat\widehat v) )\Vert_{0,\Fhat}^2 \\
& \lesssim \Vert \msf (\widehat v - \Pcalthat (\Pcalohat \widehat v)) \Vert_{0,\Ehat}^2 
         + \Vert \msf (\widehat v - \Pcalthat (\Pcalohat \widehat v)) \Vert_{0,\Ehat}   \Vert \msf \widehat\nabla (\widehat v - \Pcalthat (\Pcalohat \widehat v)) \Vert_{0,\Ehat}  \\
&\stackrel{\ell \ge 1}{\lesssim }
\Vert \msf \ell (\widehat v - \Pcalthat(\Pcalohat \widehat v)) \Vert_{0,\Ehat}^2 +  \Vert \msf \ell^{-1} \widehat\nabla (\widehat v - \Pcalthat(\Pcalohat \widehat v)) \Vert_{0,\Ehat}^2.
\end{align*}
Scaling back to~$\E$, summing over all the elements, and using
  the locality of the operators $\Pcalo$ and $\Pcalt$, as well the shape regularity of the 
meshes to insert the factor $\msf^{-{1/2}}\ell$, we deduce
\[
\begin{split}
& \Vert \msf^{-{1/2}} \ell \big(v - \Pcalt(\Pcalo v) \big) \Vert_{0,\Gamma}^2 \\[2pt]
& \qquad \lesssim \sum_{\E \in\taun \text{ with }\overline \E \cap \partial \Omega \in \FhB}  \Vert \msf^{1/2} \ell^2 (\widehat v - \Pcalthat(\Pcalohat \widehat v)) \Vert_{0,\Ehat}^2 + \Vert \msf^{1/2}\widehat\nabla (\widehat v - \Pcalthat(\Pcalohat \widehat v)) \Vert_{0,\Ehat}^2 \\[2pt]
& \qquad \lesssim \sum_{\E \in \taun \text{ with } \overline \E \cap \partial \Omega \in \FhB} \Vert \msf^{-1} \ell^2 (v - \Pcalt (\Pcalo v)) \Vert_{0,\E}^2 + \Vert \nabla( v - \Pcalt(\Pcalo v)) \Vert_{0,\E}^2\\[2pt]
& \quad \overset{\eqref{Burman-Ern-3}, \eqref{Burman-Ern-2}}{\lesssim}  \Vert \nablah v \Vert_{0,\Omega}^2 +   \Vert \msf^{-1/2} \ell \llbracket v \rrbracket \Vert_{0, \FhI}^2,
\end{split}
\]
whence the assertion follows.

\section{Explicit error estimates} \label{appendix:proof-Corollary}

\begin{proof}[Proof of Corollary~\ref{corollary:hp-convergence}]
We start by noting that, for the special case $s = 1$, the arguments below show that 
Assumption~\ref{ass:approx} is valid with $\varepsilon = O(h/p)$. By
Theorem~\ref{theorem:abstract-analysis}, this fixes $\eta_0$.

To simplify the exposition, we restrict our attention to the case $p \ge s$.
The case $p < s$ is a pure $h$-version that is shown along similar lines.
We shall nevertheless write $\min(p,s) = s$ at the appropriate places.

By \cite[Lemma~{2.3}]{bernardi_optimal_interpolation}, for any $v \in H^{s+1}(\Omega)$,
Assumption~\ref{assumption:bernardi89} implies that the following estimate for the pull-back $\widehat{v}:= v|_K \circ \Phi_K$ is valid for all $K \in \taun$:
\begin{align}
\label{eq:chain-rule}
\|\widehat{v} \|_{s+1,\widehat{K}} \leq c h_K^{s+1-3/2} \|v\|_{s+1,K}. 
\end{align}
We also note that, for $j \in \{0,1\}$ and for each face $F$ of element $K$ with corresponding pull-back $\widehat{F}:= \Phi_K^{-1}(F)$, bounds \eqref{eq:SR} imply
\begin{align}
\label{eq:Khat-to-K}
  |\widehat v|_{j,\widehat{K}} \sim
  h_K^{j-3/2} |v|_{j,K}, 
\qquad 
  |\widehat v|_{0,\widehat{F}} \sim
  h_K^{-1} |v|_{0,F}, 
\qquad 
  |\widehat \nabla \widehat v|_{0,\widehat{F}} \sim
  |\nabla v|_{0,F}. 
\end{align}
Properties~\eqref{eq:Khat-to-K} allow for transferring approximation results on the reference element $\widehat{K}$
to the physical elements $K$ (``scaling argument''). 
The last preliminary ingredient are $p$-explicit approximation results on the reference element 
for which we refer, e.g., to \cite[Lemma~{B.3}, Thm.~{B.4}]{melenk-sauter10}. 
As in, e.g., \cite{MelenkParsaniaSauter_generalDGHelmoltz},
combining the polynomial approximation results on $\widehat{K}$ with~\eqref{eq:Khat-to-K} and~\eqref{eq:chain-rule} allows for showing that 
\begin{align}
\inf_{v_h \in S^{p,0}(\Omega,\taun)} \|u - v_h\|_{\DGp} \leq c h^{\min(p,s)} p^{-s+\frac{1}{2}} \|u\|_{s+1,\Omega}. 
\end{align}
For the approximation of $\uext$ and $\m$, we obviate the discussion of changes of variables 
in fractional Sobolev norms by resorting to appropriate liftings. For the approximation of $\uext$,
let $U^{ext} \in H^{s+1}(\Omega)$ be a lifting of $\uext$ with $\|U^{ext}\|_{s+1,\Omega} \lesssim 
\|\uext\|_{s+\frac{1}{2},\Gamma}$. Since the mesh $\taun$ is a regular mesh (see the discussion 
at the outset of Section~\ref{subsection:dGOmega}),
\cite[Thm.~{B.4}]{melenk-sauter10} provides an $H^1(\Omega)$-conforming approximation with optimal convergence properties: 
$$
\inf_{v_h \in S^{p,1}(\Omega,\taun)} \|U^{ext}  - v_h\|_{1,\Omega}
\leq c h^{\min(p,s)} p^{-s} \|U^{ext}\|_{s+1,\Omega} 
\leq c h^{\min(p,s)}p^{-s}  \|\uext\|_{s+\frac{1}{2},\Gamma}. 
$$
By taking the trace of $v_h$ on $\Gamma$, we obtain the desired approximation of $\uext$. Finally, for $\m$, let $M \in H^{s}(\Omega)$
be a lifting of $\m \in H^{s-\frac{1}{2}}(\Gamma)$ with $\|M\|_{H^s(\Omega)} \lesssim \|\m\|_{H^{s-\frac{1}{2}}(\Gamma)}$. 
Let $\m_h \in S^{p-1,0}(\Gamma,\Gammah)$ be the
$L^2(\Gamma)$-projection of $\m$ into $S^{p-1,0}(\Gamma,\Gammah)$. For each 
face $F \in \FhB$, denote by $K_{F} \in \taun$ the element that has $F$ as a face. 
Using approximation results on the reference element $\widehat{K}$ and the ``scaling arguments'' \eqref{eq:Khat-to-K} we get 
\begin{align}
\label{eq:approx-m}
\|\m - \mh\|_{0,F} \leq c h_K^{\min(p,s)-1/2} p^{-s+1/2}\|M\|_{s,K_{F}}. 
\end{align}
By summation over all faces $F \in \FhB$, we arrive at 
\[
\|\msf^{1/2} p^{-1}  (\m-\mh)\|_{0,\Gamma} 
\lesssim h^{\min(p,s)} p^{-s-1/2}\| (\m-\mh) \|_{s-\frac{1}{2},\Gamma}.
\]
The $H^{-\frac{1}{2}}(\Gamma)$-estimate is obtained by a standard duality argument using the orthogonality provided by 
the $L^2(\Gamma)$-projection: 
\begin{align}
\label{eq:approx-m-dual}
\|\m - \m_h\|_{-\frac{1}{2}, \Gamma} = 
\sup_{v \in H^{\frac{1}{2}}(\Gamma)} \frac{|\langle \m - \m_h,v\rangle|}{\|v\|_{\frac{1}{2},\Gamma} }
 = 
\sup_{v \in H^{\frac{1}{2}}(\Gamma)} \inf_{v_h \in S^{p-1,0}(\Gamma,\Gammah)} \frac{|\langle \m - \m_h,v - v_h\rangle| }{\|v\|_{\frac{1}{2},\Gamma} }. 
\end{align}
The infimum is estimated by taking $v_h$ as the
$L^2(\Gamma)$-projection of $v$ into $S^{p-1,0}(\Gamma,\Gammah)$. To estimate $v - v_h$, 
let $V \in H^1(\Omega)$ be a lifting of $v \in H^{\frac{1}{2}}(\Gamma)$ with $\|V\|_{1,\Omega} \lesssim \|v\|_{\frac{1}{2},\Gamma}$. 
By the same arguments as in~\eqref{eq:approx-m} (taking $s = 1$), we have 
\begin{align*}
\|v - v_h\|_{0,F} \leq c h_K^{\min(p,1)-1/2} p^{-1+1/2} \|V\|_{1,K_{F}}.
\end{align*}
Inserting this in~\eqref{eq:approx-m-dual} yields 
\begin{align*}
\|\m - \m_h\|_{-\frac{1}{2},\Gamma} 
& \lesssim \sup_{v \in H^{\frac{1}{2}}(\Gamma)} \frac{ \sum_{F \in \FhB} \|\m - \m_h\|_{0,F} \|v - v_h\|_{0,F} }{\|v\|_{\frac{1}{2},\Gamma} } \\
& \lesssim \sup_{v \in H^{\frac{1}{2}}(\Gamma)} \frac{1}{\|v\|_{\frac{1}{2},\Gamma}} \sum_{F \in \FhB}  p^{-s} h_K^{\min(p,s)-1/2+1-1/2} \|M\|_{s,K_{F}}\|V\|_{1,K_{F}} \\
&\lesssim h^{\min(p,s)} p^{-s} \|m\|_{s-\frac{1}{2},\Gamma} , 
\end{align*}
which completes the proof.
\end{proof}

\end{document}